\renewcommand{\tikz}[2]{
	\begin{tikzpicture}[scale=#1,baseline=(current bounding box.center),>=stealth]
		#2
\end{tikzpicture}}
\colorlet{lgray}{white!85!black}
\numberwithin{equation}{section}
\newtheorem{thm}{Theorem}[section]
\newtheorem{prop}[thm]{Proposition}
\newtheorem{lem}[thm]{Lemma}
\newtheorem{cor}[thm]{Corollary}
\theoremstyle{remark}
\newtheorem{rem}[thm]{Remark}
\theoremstyle{definition}
\newtheorem{definition}[thm]{Definition}
\newtheorem{example}[thm]{Example}
\DeclareMathOperator{\FV}{FV}
\DeclareMathOperator{\CV}{CV}
\DeclareMathOperator{\MM}{MM}
\DeclareMathOperator{\PV}{PV}
\DeclareMathOperator{\HL}{HL}
\DeclareMathOperator{\SM}{SM}
\DeclareMathOperator{\TW}{TW}
\begin{document}
	
	\title{Deformed polynuclear growth in (1+1) dimensions}
	\author{Amol Aggarwal, Alexei Borodin, and Michael Wheeler}

	\begin{abstract} 
		
		We introduce and study a one parameter deformation of the polynuclear growth (PNG) in (1+1)-dimensions, which we call the $t$-PNG model. It is defined by requiring that, when two expanding islands merge, with probability $t$ they sprout another island on top of the merging location. At $t=0$, this becomes the standard (non-deformed) PNG model that, in the droplet geometry, can be reformulated through longest increasing subsequences of uniformly random permutations or through an algorithm known as patience sorting. In terms of the latter, the $t$-PNG model allows errors to occur in the sorting algorithm with probability $t$.
		
		We prove that the $t$-PNG model exhibits one-point Tracy--Widom GUE asymptotics at large times for any fixed $t\in [0,1)$, and one-point convergence to the narrow wedge solution of the Kardar--Parisi--Zhang (KPZ) equation as $t$ tends to $1$. We further construct distributions for an external source that are likely to induce Baik--Ben Arous--P\'{e}ch\'{e} type phase transitions. The proofs are based on solvable stochastic vertex models and their connection to the determinantal point processes arising from Schur measures on partitions. 
	\end{abstract}

	\maketitle 
	
    \setcounter{tocdepth}{1}
    \tableofcontents
	
	\section{Introduction}
	\label{Intro} 
	
	The process of \emph{polynuclear growth} (PNG, for short) is a mathematical model for randomly growing interfaces. If the space is one-dimensional, it can be described as follows; see the book of Meakin \cite{Meakin} for a broader context. The interface is represented by a continuous broken line in a plane that consists of horizontal linear segments and height 1 up or down steps between them. As time progresses, the up and down steps move with speed 1 to the left and to the right, respectively; this is interpreted as lateral growth of islands that form on the interface. When a left-moving up step and a right-moving down step meet, they disappear, which corresponds to merging of neighboring islands. 
	In addition to that, new islands are randomly created by adding an up step and a down step separated by infinitesimal distance (that immediately starts growing). The creation, or \emph{nucleation} events are space-time uncorrelated, which is represented by their space-time locations forming the two-dimensional Poisson process with intensity 1. 
	
	We will be interested in the so-called \emph{droplet} PNG, where initially the interface is perfectly flat, and all nucleation events take place in the light cone of the origin $\big\{ (x,\tau)\in \mathbb{R}\times \mathbb{R}_+: |x|<\tau \big\}$. Three successive snapshots of this process are depicted in Figure \ref{fig1}, where the growing interface is pictured on top in red, and the cone below is the space-time locus of the nucleation events symbolized by dots. The horizontal line that runs through the cones indicates the value of time $\tau$ at which the interface is drawn, with the corner corresponding to $\tau=x=0$. The full animation, created by Patrik Ferrari, can be found on his webpage \texttt{https://wt.iam.uni-bonn.de/ferrari/research/animationpng}.
	
	\begin{figure}
		\label{fig1}
		\includegraphics[width=\textwidth]{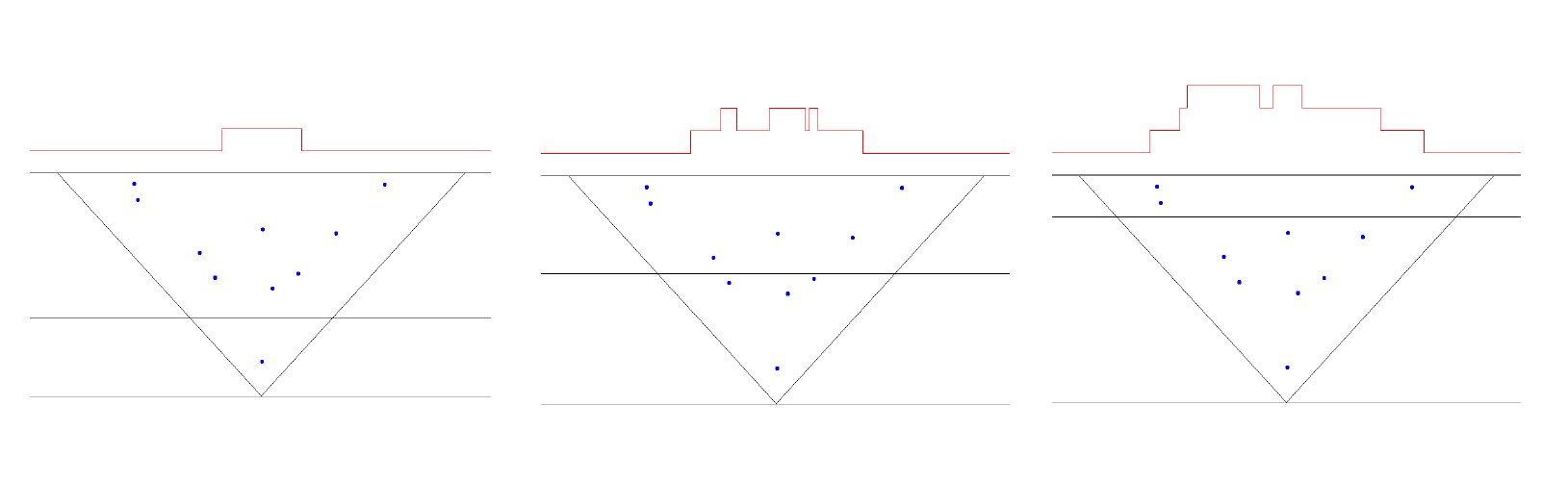}
		\caption{Snapshots of the droplet PNG in (1+1) dimensions.}
	\end{figure}
	
	We will view the interface of the droplet PNG as the graph of a function called the \emph{height function}, and denote it by $\mathfrak H(x,\tau)$; we assume that initially $\mathfrak H(x,0)\equiv 0$. It is not difficult to show, cf. Pr\"ahofer--Spohn \cite{PS-PRL}, that for $|x|<\tau$, $\mathfrak H(x,\tau)$ is equidistributed with the length of the longest increasing subsequence of the uniformly random permutation of size $n$, where $n$ is itself an independent Poisson-distributed random variable with parameter $\frac{1}{2}(\tau^2-x^2)$.\footnote{This random variable counts the number of nucleation events that affect $\mathfrak H(x,\tau)$, and its expectation is the area of the rectangle with opposite corners at $(0,0)$ and $(x,\tau)$ and sides parallel to the cone walls.} Two decades ago, breakthrough results by Baik--Deift--Johansson \cite{DLLSRP} on asymptotic fluctuations of the length of the longest increasing subsequences of random permutations,  and by Johansson \cite{Joh-shape} on asymptotic fluctuations of the totally asymmetric simple exclusion process (TASEP), opened the gates towards understanding a close relationship between such (1+1)d random growth models and random matrix type ensembles, cf. the survey of Ferrari--Spohn \cite{FS-survey}. The height function $\mathfrak H(x,\tau)$ also admits an interpretation through an algorithm called \emph{patience sorting}; see the survey \cite{LSPST} of Aldous--Diaconis (and Appendix \ref{ModelSort} below).

	Both PNG and TASEP belong to the so-called (conjectural) Kardar--Parisi--Zhang (KPZ) universality class of random growth models, named after the authors of seminal work \cite{KPZ}. Another member of this class is the KPZ stochastic partial differential equation introduced in the same paper. It is more difficult to analyze, and arguably the best known way to understanding large time asymptotics of this equation is through two one-parameter deformations of the TASEP, namely, the (partially) asymmetric exclusion process (ASEP) dating back to the work of Spitzer \cite{Spitzer} and Macdonald--Gibbs--Pipkin \cite{MGP}, and the more recent $q$-TASEP introduced by Borodin--Corwin in \cite{P}. Remarkable analysis of the ASEP by Tracy-Widom \cite{TW} led to finding the form and asymptotics of certain solutions of the KPZ equation by Amir--Corwin--Quastel \cite{ACQ} and Sasamoto--Spohn \cite{SS1}. An alternative and non-rigorous approach to such solutions via 1d delta-interaction Bose gas and replica by Dotsenko \cite{Dotsenko} and Calabrese--Le Doussal--Rosso \cite{CDR}, was regularized by means of the $q$-TASEP in \cite{P}. 
	
	Despite the substantial progress in this area that ensued, no analogously simple deformation of the PNG process has been described so far, to the best of our knowledge. The goal of this work is to present one. It would be fitting to use the term ``$q$-PNG'' for such a deformation. However, in what follows we choose a different letter $t$ to denote the deformation parameter, because of its tight connection to a similarly named parameter in the theory of symmetric functions. Correspondingly, we will speak of a $t$-PNG below. The value of $t=0$ corresponds to the standard (non-deformed) PNG process that was described above.    
	
	The definition of this (droplet) $t$-PNG model is very similar to the non-deformed one. The only difference is in what happens when a right-moving up step and a left-moving down step meet. We now stipulate that, with probability $1-t$, they disappear as before (in which case, the corresponding islands simply merge). With the complementary probability $t$, simultaneously with the merging, another island of infinitesimal size is created on top of the merging place. In other words, with probability $t$ another nucleation is added at the space-time location of the merging event. In yet another interpretation, if we speak in the language of rays in space-time formed by the moving up/down steps (this is the language we use in the text below), merging corresponds to annihilation of two rays at their intersection, while merging with nucleation corresponds to those rays moving through each other despite their intersection. See Figure \ref{model1} below for an illustration of the behavior of these rays. There is also a concise description of the corresponding deformation of the patience sorting -- one needs to introduce independent errors occuring with probability $t$ in choosing which pile to place a card onto; see Appendix \ref{ModelSort} below. 
	
	We prove two limiting statements about large time asymptotic behavior of the height function of the $t$-PNG model at a single point. 
	In order to state them, it is more convenient to set the intensity of the Poisson process of the nucleation events to be $1-t$. Let us denote the corresponding height function by $\mathfrak{H}_t$. 
	
	First, we prove, in Theorem \ref{hxnynlimit} below, that
	\begin{flalign*}
		\displaystyle\lim_{\tau^2-x^2 \rightarrow \infty} \mathbb{P} \bigg[ \displaystyle\frac{\mathfrak{H_t} (x, \tau) - (\tau^2-x^2)^{1/2}}{2^{-1/3}(\tau^2-x^2)^{1/6}} \le s \bigg] = F_{\TW} (s),
	\end{flalign*} 
	\noindent where $F_{\TW}$ denotes the Tracy--Widom Gaussian Unitary Ensemble (GUE) distribution. At $t=0$ this coincides with the (Poissonized version of the) Baik--Deift--Johansson theorem, and the only $t$-dependence in the statement is in the definition of $\mathfrak{H}_t$.
	 
    The second claim is convergence to a solution of the KPZ equation. To that end, we choose a small parameter $\varepsilon>0$ and set $t=\exp(-\varepsilon)$. Then, assuming $|x|<\tau$, the normalized and centered height function 
   \begin{flalign*} 
   	\varepsilon \big( \mathfrak{H}_t (\varepsilon^{-3}x, \varepsilon^{-3}\tau) - \varepsilon^{-3} (\tau^2-x^2) \big) - \log \varepsilon
   \end{flalign*}
   weakly converges, as $\varepsilon \to 0$, to $\frac{T}{24} - \mathcal{H}_T(0)$, where $T=\tau^2-x^2$, and $\mathcal{H}_T(X)$ denotes the (Cole--Hopf) solution of the KPZ equation with narrow wedge initial data at time $T$ and position $X$. This is the subject of Theorem \ref{equationlimit} below. 
   
   We also describe a family of distributions for additional nucleation events along one of the boundaries of the cone 
   $|x|<\tau$ that are likely to induce what is known as the Baik-Ben Arous-P\'{e}ch\'{e} type phase transition \cite{PTLECSCM} for $\mathfrak H_t$. Such phase transitions for the $q$-TASEP, ASEP, and KPZ equation were described by Barraquand \cite{Barraquand}, Aggarwal--Borodin \cite{PTASSVM}, and Borodin--Corwin--Ferrari \cite{FEF},  respectively. Additional nucleations at the two walls of the cone are often referred to as \emph{external sources}, cf. Baik--Rains \cite{LDPGMES} and Imamura--Sasamoto \cite{FPGMES}. 
   
   We expect that the $t$-PNG model should admit further results, such as multi-point convergence to the Airy$_2$ process (following the ideas of Vir\'{a}g \cite{Virag}); multi-point convergence to the narrow wedge solution of the KPZ equation (see, e.g., Corwin--Ghosal--Shen--Tsai \cite{CGST} and references therein); introduction of the second external source and description of the stationary growth (cf. Aggarwal \cite{Agg-ASEP}); adding colors to the model in such a way that it remains integrable (cf. Borodin--Wheeler \cite{SVMST}); and extending the model to multiple layers via RSK-type algorithms (cf. Pr\"ahofer--Spohn \cite{SIDP} in the $t=0$ case). However, we will not pursue these directions in the present text.   

   Our proofs are based on relatively recent techniques of solvable stochastic lattice models and their relation to the theory of symmetric functions. The $t$-PNG model arises as a certain limit of a fully fused $U_t(\widehat{\mathfrak{sl}}_2)$ stochastic vertex model in a quadrant. Such models are known to be related to \emph{Macdonald measures} on partitions \cite{P} in two different ways: (a) the two have equal averages of certain observables, and (b) the height function of the vertex models is equidistributed with the length of the corresponding random partitions for the Hall--Littlewood measures; see Borodin \cite{SHSVMM} and Borodin--Bufetov--Wheeler \cite{SSVMP}. Applying (a) to connect to the Schur measures, we are able to deduce our limit results from the Airy asymptotics of the determinantal point processes related to the non-deformed PNG process (and to the Plancherel measure on partitions), which has been well understood since 
   the works of Baik--Deift--Johansson \cite{DLLSRP}, Borodin--Okounkov--Olshanski \cite{AMSG}, and Johansson \cite{DOPM}. Applying (b) tells us that, in the language of symmetric functions, our construction of the $t$-PNG model corresponds to passing from Hall--Littlewood measures to those related to \emph{modified} Hall--Littlewood polynomials and further considering Plancherel specializations of those. This should be compared to the role of the $q$-Whittaker measures for the $q$-TASEP, see \cite{P}, and Hall--Littlewood measures for the ASEP, see Bufetov--Matveev \cite{BM}. In fact, it is the focus on the modified Hall--Littlewood polynomials, which played an important role in our recent work \cite{ABW}, that led us to the deformed PNG.   
	
	The remainder of this text is organized as follows. In \Cref{Vertex} we recall the definitions and various properties of the $U_t ( \widehat{\mathfrak{sl}}_2)$ fused stochastic higher spin vertex models and their relation to Macdonald measures. In \Cref{WeightsFused} we analyze certain limits of these fused weights, which will give rise to the $t$-PNG model (possibly with boundary conditions) in \Cref{Modelq}. In \Cref{Limit} we use a matching result between the $t$-PNG model and the Poissonized Plancherel measure to derive asymptotic results concerning the $t$-PNG model, including its large scale fluctuations and its scaling limit to the Kardar--Parisi--Zhang (KPZ) equation. In \Cref{ModelSort} we provide an alternative interpretation for the $t$-PNG model through patience sorting; in \Cref{ConvergeModel} we provide a careful proof of how the $t$-PNG model appears as a limit of a stochastic fused vertex model; and in \Cref{EquationProof} we provide an alternative proof of an expectation matching result (\Cref{pvm} below) that we use. 
	
	In what follows, we denote the \emph{q-Pochhammer symbol} $(a; q)_k = \prod_{j = 0}^{k - 1} (1 - q^j a)$, for any complex numbers $a, q \in \mathbb{C}$ and integer $k \ge 0$. 
	
	\subsection*{Acknowledgments}
	
	Amol Aggarwal was partially supported by a Clay Research Fellowship. Alexei Borodin was partially supported by the NSF grants DMS-1664619, DMS-1853981 and the Simons Investigator program. Michael Wheeler was supported by an Australian Research Council Future Fellowship, grant FT200100981.  
	
	\section{Miscellaneous Preliminaries}
	
	\label{Vertex}
	
	In this section we collect various miscellaneous results concerning vertex models and Macdonald measures. In \Cref{PathEnsembles} we recall the definition of the fused stochastic higher spin vertex models from \cite{SHSVML,HSVMSRF} and the notion of fusion. In \Cref{MeasureModel} we recall matching results from \cite{SHSVMM} between these stochastic vertex models and certain Macdonald measures.
	
	\subsection{Fused Stochastic Higher Spin Vertex Model}
	
	\label{PathEnsembles}

	The vertex models we consider will be probability measures on ensembles of directed up-right paths\footnote{We will later ``complement'' these paths in a way that changes their orientations from up-right to up-left.} on the positive quadrant $\mathbb{Z}_{> 0}^2$ that emanate from the $x$ and $y$ axes; see the right side of \Cref{arrows} for an example. The specific forms of these probability measures are expressed through weights associated with each vertex $v \in \mathbb{Z}_{> 0}^2$. These weights will depend on the \emph{arrow configuration} of $v$, which is a quadruple $(i_1, j_1; i_2, j_2) = (i_1, j_1; i_2, j_2)_v$ of non-negative integers. Here, $i_1$ counts the number of paths vertically entering through $v$. In the same way $j_1$, $i_2$, and $j_2$ count paths horizontally entering, vertically exiting, and horizontally exiting through $v$, respectively.  An example of an arrow configuration is depicted on the left side of \Cref{arrows}. 
	
	Assigning values $j_1$ to vertices on the line $(1, y)$ and values $i_1$ to vertices on the line $(x, 1)$ can be viewed as imposing boundary conditions on the vertex model. If for some sequence $\boldsymbol{J} = (J_1, J_2, \ldots )$ of nonnegative integers we have $j_1 = J_y$ at $(1, y)$ and $i_1 = 0$ at $(x, 1)$ for each $x, y > 0$, then $J_y$ paths enter through each site $(0, y)$ of the $y$-axis, and no paths enter through any site of the $x$-axis. We will refer to this particular assignment as \emph{$\boldsymbol{J}$-step boundary data}; in the special case when $\boldsymbol{J} = (1, 1, \ldots )$, it will be abbreviated \emph{step boundary data}. See the right side of \Cref{arrows} for an example when $\boldsymbol{J} = (2, 2, \ldots )$. In general, we will refer to any assignment of $i_1$ to $\mathbb{Z}_{> 0} \times \{ 1 \}$ and $j_1$ to $\{ 1 \} \times \mathbb{Z}_{> 0}$ as \emph{boundary data}, which can be deterministic (like $\boldsymbol{J}$-step) or random. 
	
	In addition to depending on the arrow configuration $(i_1, j_1; i_2, j_2)$, the vertex weight at $v \in \mathbb{Z}^2$ will also be governed by several complex parameters. The first among them consist in two pairs of \emph{rapidity parameters} $(u; r)$ and $(\xi; s)$, which are associated with the row and column intersecting to form $v$, respectively; these rapidities $(u; r)$ and $(\xi; s)$ may vary across the domain but remain constant along rows or columns, respectively. The last is a \emph{quantization parameter}\footnote{In the framework of vertex models, this parameter is typically denoted by $q$. However, it will eventually match with the parameter denoted by $t$ in the context of Macdonald polynomials, and so we use the notation $t$ here.} $t$, which cannot vary and is fixed throughout the model. This produces five governing parameters, but the vertex weight will in fact only depend on $u$ and $\xi$ through their quotient $z = \frac{u}{\xi}$, which is sometimes referred to as a \emph{spectral parameter}. Given this notation, we can define the following vertex weights.
	
	\begin{definition}
		
		\label{lzsr}
		
		Fix an arrow configuration $(i_1, j_1, i_2, j_2) \in \mathbb{Z}_{\ge 0}^4$ and complex numbers $z, r, s \in \mathbb{C}$. Assume there exists an integer $J \ge 0$ such that $r^2 = t^{-J}$. Then, define the \emph{vertex weight} $L_z (i_1, j_1; i_2, j_2 \boldsymbol{\mid} r, s)$ by setting
		\begin{flalign}
			\label{lz} 
			\begin{aligned} 
			L_z (i_1, j_1; i_2, j_2 \boldsymbol{\mid} r, s) & =  \textbf{1}_{i_1 + j_1 = i_2 + j_2} \textbf{1}_{j_1 \le J} \textbf{1}_{j_2 \le J} \cdot (-1)^{i_1} t^{\binom{i_1}{2} + i_1 j_1} z^{i_1} s^{j_1 + j_2 - i_2} \\
			& \qquad \times \displaystyle\frac{(s^{-1} z; t)_{j_2 - i_1} (s^2; t)_{i_2} (t^{j_2 - i_1 + 1}; t)_{i_2} (r^{-2} t^{1 - i_2 - j_2}; t)_{i_2}}{(t; t)_{i_2} (sz; t)_{i_2 + j_2} (r^{-2} t^{1 - j_1}; t)_{j_1 - j_2}}  \\
			& \qquad \times \displaystyle\sum_{k = 0}^{i_2} t^k \displaystyle\frac{(t^{-i_2}; t)_k (t^{-i_1}; t)_k (r^{-2} sz; t)_k (tsz^{-1}; t)_k}{(t; t)_k (s^2; t)_k (t^{j_2 - i_1 + 1}; t)_k (r^{-2} t^{1 - i_2 - j_2}; t)_k}.
			\end{aligned} 
		\end{flalign}
	
	\end{definition}

	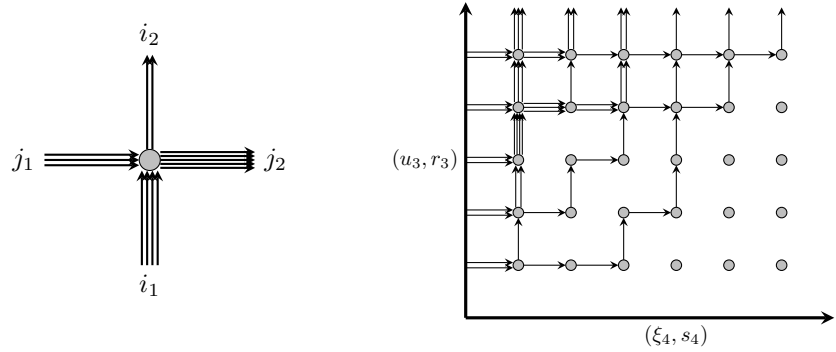
\begin{figure}
		
		\begin{center} 
			
			\begin{tikzpicture}[
				>=stealth,
				scale = .7
				]
				
				\draw[->,black, thick] (-.05, .2) -- (-.05, 2);
				\draw[->,black, thick] (.05, .2) -- (.05, 2);
				\draw[->,black, thick] (-.15,-2) -- (-.15, -.2);
				\draw[->,black, thick] (-.05,-2) -- (-.05, -.2);
				\draw[->,black, thick] (.05,-2) -- (.05, -.2);
				\draw[->,black, thick] (.15,-2) -- (.15, -.2);
				\draw[->,black, thick] (-2, -.1) -- (-.2, -.1);
				\draw[->,black, thick] (-2, 0) -- (-.2, 0);
				\draw[->,black, thick] (-2,.1) -- (-.2, .1);
				\draw[->,black, thick] (.2, -.15) -- (2, -.15); 
				\draw[->,black, thick] (.2, -.075) -- (2, -.075); 
				\draw[->,black, thick] (.2, 0) -- (2, 0); 
				\draw[->,black, thick] (.2, .075) -- (2, .075); 
				\draw[->,black, thick] (.2, .15) -- (2, .15);
				
				\draw[] (0, 2) circle [radius = 0] node[above]{$i_2$};
				\draw[] (0, -2) circle [radius = 0] node[below]{$i_1$};
				\draw[] (-2, 0) circle [radius = 0] node[left]{$j_1$};
				\draw[] (2, 0) circle [radius = 0] node[right]{$j_2$};
				
				\filldraw[fill=gray!50!white, draw=black] (0, 0) circle [radius=.2];
				
				\draw[->, very thick] (6, -3) -- (13, -3);
				\draw[->, very thick] (6, -3) -- (6, 3);
				
				\draw[->] (6, -1.95) -- (6.9, -1.95);
				\draw[->] (6, -2.05) -- (6.9, -2.05);
				
				\draw[->] (6, -.95) -- (6.9, -.95);
				\draw[->] (6, -1.05) -- (6.9, -1.05);
				
				\draw[->] (6, -.05) -- (6.9, -.05);
				\draw[->] (6, .05) -- (6.9, .05);
				
				\draw[->] (6, .95) -- (6.9, .95);
				\draw[->] (6, 1.05) -- (6.9, 1.05);
				
				\draw[->] (6, 1.95) -- (6.9, 1.95);
				\draw[->] (6, 2.05) -- (6.9, 2.05);
				
				\draw[->] (7, -1.9) -- (7, -1.1);
				\draw[->] (7.1, -2) -- (7.9, -2);
				\draw[->] (8.1, -2) -- (8.9, -2);
				\draw[->] (9, -1.9) -- (9, -1.1);
				
				\draw[->] (6.95, -.9) -- (6.95, -.1);
				\draw[->] (7.05, -.9) -- (7.05, -.1);
				\draw[->] (7.1, -1) -- (7.9, -1);
				\draw[->] (8, -.9) -- (8, -.1);
				\draw[->] (9.1, -1) -- (9.9, -1);
				\draw[->] (10, -.9) -- (10, -.1);
				
				\draw[->] (6.925, .1) -- (6.925, .9);
				\draw[->] (6.975, .1) -- (6.975, .9);
				\draw[->] (7.025, .1) -- (7.025, .9);
				\draw[->] (7.075, .1) -- (7.075, .9);
				\draw[->] (8.1, 0) -- (8.9, 0);
				\draw[->] (9, .1) -- (9, .9);
				\draw[->] (10, .1) -- (10, .9);
				
				\draw[->] (6.925, 1.1) -- (6.925, 1.9);
				\draw[->] (7, 1.1) -- (7, 1.9);
				\draw[->] (7.075, 1.1) -- (7.075, 1.9);
				\draw[->] (7.1, .925) -- (7.9, .925);
				\draw[->] (7.1, 1) -- (7.9, 1);
				\draw[->] (7.1, 1.075) -- (7.9, 1.075);
				\draw[->] (8, 1.1) -- (8, 1.9);
				\draw[->] (8.1, .95) -- (8.9, .95);
				\draw[->] (8.1, 1.05) -- (8.9, 1.05);
				\draw[->] (8.95, 1.1) -- (8.95, 1.9);
				\draw[->] (9.05, 1.1) -- (9.05, 1.9);
				\draw[->] (9.1, 1) -- (9.9, 1);
				\draw[->] (10, 1.1) -- (10, 1.9);
				\draw[->] (10.1, 1) -- (10.9, 1);
				\draw[->] (11, 1.1) -- (11, 1.9);
				
				\draw[->] (6.925, 2.1) -- (6.925, 2.9);
				\draw[->] (7, 2.1) -- (7, 2.9);
				\draw[->] (7.075, 2.1) -- (7.075, 2.9);
				\draw[->] (7.1, 1.95) -- (7.9, 1.95);
				\draw[->] (7.1, 2.05) -- (7.9, 2.05);
				\draw[->] (7.95, 2.1) -- (7.95, 2.9);
				\draw[->] (8.05, 2.1) -- (8.05, 2.9);
				\draw[->] (8.1, 2) -- (8.9, 2);
				\draw[->] (8.95, 2.1) -- (8.95, 2.9);
				\draw[->] (9.05, 2.1) -- (9.05, 2.9);
				\draw[->] (9.1, 2) -- (9.9, 2);
				\draw[->] (10, 2.1) -- (10, 2.9);
				\draw[->] (10.1, 2) -- (10.9, 2);
				\draw[->] (11, 2.1) -- (11, 2.9);
				\draw[->] (11.1, 2) -- (11.9, 2);
				\draw[->] (12, 2.1) -- (12, 2.9);
				
				\filldraw[fill=gray!50!white, draw=black] (7, -2) circle [radius=.1];
				\filldraw[fill=gray!50!white, draw=black] (8, -2) circle [radius=.1];
				\filldraw[fill=gray!50!white, draw=black] (9, -2) circle [radius=.1];
				\filldraw[fill=gray!50!white, draw=black] (10, -2) circle [radius=.1];
				\filldraw[fill=gray!50!white, draw=black] (11, -2) circle [radius=.1];
				\filldraw[fill=gray!50!white, draw=black] (12, -2) circle [radius=.1];
				
				\filldraw[fill=gray!50!white, draw=black] (7, -1) circle [radius=.1];
				\filldraw[fill=gray!50!white, draw=black] (8, -1) circle [radius=.1];
				\filldraw[fill=gray!50!white, draw=black] (9, -1) circle [radius=.1];
				\filldraw[fill=gray!50!white, draw=black] (10, -1) circle [radius=.1];
				\filldraw[fill=gray!50!white, draw=black] (11, -1) circle [radius=.1];
				\filldraw[fill=gray!50!white, draw=black] (12, -1) circle [radius=.1];
				
				\filldraw[fill=gray!50!white, draw=black] (7, 0) circle [radius=.1];
				\filldraw[fill=gray!50!white, draw=black] (8, 0) circle [radius=.1];
				\filldraw[fill=gray!50!white, draw=black] (9, 0) circle [radius=.1];
				\filldraw[fill=gray!50!white, draw=black] (10, 0) circle [radius=.1];
				\filldraw[fill=gray!50!white, draw=black] (11, 0) circle [radius=.1];
				\filldraw[fill=gray!50!white, draw=black] (12, 0) circle [radius=.1];
				
				\filldraw[fill=gray!50!white, draw=black] (7, 1) circle [radius=.1];
				\filldraw[fill=gray!50!white, draw=black] (8, 1) circle [radius=.1];
				\filldraw[fill=gray!50!white, draw=black] (9, 1) circle [radius=.1];
				\filldraw[fill=gray!50!white, draw=black] (10, 1) circle [radius=.1];
				\filldraw[fill=gray!50!white, draw=black] (11, 1) circle [radius=.1];
				\filldraw[fill=gray!50!white, draw=black] (12, 1) circle [radius=.1];
				
				\filldraw[fill=gray!50!white, draw=black] (7, 2) circle [radius=.1];
				\filldraw[fill=gray!50!white, draw=black] (8, 2) circle [radius=.1];
				\filldraw[fill=gray!50!white, draw=black] (9, 2) circle [radius=.1];
				\filldraw[fill=gray!50!white, draw=black] (10, 2) circle [radius=.1];
				\filldraw[fill=gray!50!white, draw=black] (11, 2) circle [radius=.1];
				\filldraw[fill=gray!50!white, draw=black] (12, 2) circle [radius=.1];

				\draw[] (6, 0) circle [radius = 0] node[left, scale = .8]{$(u_3, r_3)$};
				\draw[] (10, -3) circle [radius = 0] node[below, scale = .8]{$(\xi_4, s_4)$};
			\end{tikzpicture}
			
		\end{center}
		
		\caption{\label{arrows} Shown to the left is a vertex with arrow configuration $(i_1, j_1; i_2, j_2) = (4, 3; 2, 5)$. Shown to the right is a vertex model with $(2, 2, \ldots )$-step boundary data.} 
	\end{figure}

	The weights \eqref{lz} were originally found as equation (5.8) of \cite{ESVM} as entries for the higher spin $R$-matrix associated with the affine quantum algebra $U_t (\widehat{\mathfrak{sl}}_2)$. They were later interpreted as weights for stochastic vertex models through Theorem 3.15 of \cite{SHSVML} and equation (5.6) of \cite{HSVMSRF}; in particular, \eqref{lz} matches with the latter upon equating the $(q, q^J)$ there with $(t, r^{-2})$ here. As indicated by Theorem 3.15 of \cite{SHSVML}, the weights $L_z$ are \emph{stochastic}, meaning 
	\begin{flalign}
		\label{lzsum1} 
		\displaystyle\sum_{i_2, j_2 \ge 0} L_z (i_1, j_1; i_2, j_2 \boldsymbol{\mid} r, s) = 1.
	\end{flalign}

	\noindent Throughout, the parameters $(z, r, s)$ will be selected so that the summands in \eqref{lzsum1} are nonnegative.
	
	Now let us describe how to sample a random path ensemble using the $L_z$ weights from \eqref{lz}. We will first define probability measures $\mathbb{P}_n$ on the set of path ensembles whose vertices are all contained in triangles of the form $\mathbb{T}_n = \{ (x, y) \in \mathbb{Z}_{\ge 0}^2: x + y \le n \}$, and then we will take a limit as $n$ tends to infinity to obtain the vertex models in infinite volume. The first two measures $\mathbb{P}_0$ and $\mathbb{P}_1$ are both supported by the empty ensembles (that have no paths). 
	
	For each positive integer $n \ge 1$, we will define $\mathbb{P}_{n + 1}$ from $\mathbb{P}_n$ through the following Markovian update rules. Use $\mathbb{P}_n$ to sample a directed path ensemble $\mathcal{E}_n$ on $\mathbb{T}_n$. This yields arrow configurations for all vertices in the triangle $\mathbb{T}_{n - 1}$. To extend this to a path ensemble on $\mathbb{T}_{n + 1}$, we must prescribe arrow configurations to all vertices $(x, y)$ on the complement $\mathbb{T}_n \setminus\mathbb{T}_{n - 1}$, which is the diagonal $\mathbb{D}_n = \{ (x, y) \in \mathbb{Z}_{> 0}^2: x + y = n \}$. Since any incoming arrow to $\mathbb{D}_n$ is an outgoing arrow from $\mathbb{D}_{n - 1}$, $\mathcal{E}_n$ and the initial data prescribe the first two coordinates, $i_1$ and $j_1$, of the arrow configuration to each $(x, y) \in \mathbb{D}_n$. Thus, it remains to explain how to assign the second two coordinates ($i_2$ and $j_2$) to any vertex on $\mathbb{D}_n$, given the first two coordinates. 
	
	This is done by producing $(i_2, j_2)_{(x, y)}$ from $(i_1, j_1)_{(x, y)}$ according to the transition probability 
	\begin{flalign}
			\label{configurationprobabilities} 
			& \mathbb{P}_n \big[ (i_2, j_2)_{(x, y)} \big| (i_1, j_1)_{(x, y)} \big] = L_{u_x \xi_y} (i_1, j_1; i_2, j_2 \boldsymbol{\mid} r_y, s_x),   
	\end{flalign}
	
	\noindent where $t \in \mathbb{C}$ is a complex number and $\boldsymbol{u} = (u_1, u_2, \ldots ) \subset \mathbb{C}$, $\boldsymbol{\xi} = (\xi_1, \xi_2, \ldots ) \subset \mathbb{C}$, $\boldsymbol{r} = (r_1, r_2, \ldots ) \subset \mathbb{C}$, and $\boldsymbol{s} = (s_1, s_2, \ldots ) \subset \mathbb{C}$ are infinite sequences of complex numbers, so that $r_y^2 = t^{-J_y}$ for each $y \ge 1$, for some sequence of nonnegative integers $\boldsymbol{J} = (J_1, J_2, \ldots )$ (as in \Cref{lzsr}). We assume that these parameters are chosen so that the probabilities \eqref{configurationprobabilities} are all nonnegative; the stochasticity \eqref{lzsum1} of the $L_z$ weights then ensures that \eqref{configurationprobabilities} is indeed a probability measure.
	
	Choosing $(i_2, j_2)$ according to the above transition probabilities yields a random directed path ensemble $\mathcal{E}_{n + 1}$, now defined on $\mathbb{T}_{n + 1}$; the probability distribution of $\mathcal{E}_{n + 1}$ is then denoted by $\mathbb{P}_{n + 1}$. We define $\mathbb{P}_{\FV} = \lim_{n \rightarrow \infty} \mathbb{P}_n$.\footnote{Here, $\FV$ stands for ``fused vertex,'' and below $\PV$ will stand for ``prefused vertex.''} Then, $\mathbb{P}_{\FV}$ is a probability measure on the set of directed path ensembles that depends on the parameters $t$, $\boldsymbol{u}$, $\boldsymbol{\xi}$, $\boldsymbol{r}$ (equivalently, $\boldsymbol{J}$), and $\boldsymbol{s}$. This measure is called the \emph{fused stochastic higher spin vertex model}; we denote the associated expectation by $\mathbb{E}_{\FV}$. 
	
	If $r_y = t^{-1/2}$ (that is, $J_y = 1$), then this model is known as the \emph{(prefused) stochastic higher spin vertex model} \cite{SHSVML,HSVMSRF}. By \eqref{lz}, $L_z (i_1, j_1; i_2, j_2 \boldsymbol{\mid} r, s) = 0$ if either $j_1 \notin \{ 0, 1 \}$ or $j_2 \notin \{ 0, 1 \}$. In particular, horizontal edges of this model can accommodate at most one arrow (but vertical edges may accommodate arbitrarily many). 
	
	\begin{rem} 
		
	\label{pfvrational}
	
	Although we have assumed above that $(t, \boldsymbol{u}, \boldsymbol{\xi}, \boldsymbol{J}, \boldsymbol{s})$ are chosen to ensure that the weights \eqref{configurationprobabilities} all nonnegative, the probability under $\mathbb{P}_{\FV}$ of any cylinder event is a rational function in these parameters. Therefore, the probability $\mathbb{P}_{\FV}$ and expectation $\mathbb{E}_{\FV}$ remain well-defined by analytic continuation for any complex parameters $(t, \boldsymbol{u}, \boldsymbol{\xi}, \boldsymbol{J}, \boldsymbol{s})$ (with $\boldsymbol{J}$ consisting of nonnegative integers) when this nonnegativty does not hold.
	
	\end{rem}

	Associated with any six-vertex ensemble $\mathcal{E}$ on the positive quadrant $\mathbb{Z}_{> 0}^2$ is a \emph{height function} $\mathfrak{h} : \mathbb{Z}_{> 0}^2 \rightarrow \mathbb{Z}$, defined by setting $\mathfrak{h} (x, y)$ equal to the number of paths in $\mathcal{E}$ that pass either through or below $(x, y)$. Observe that $\mathcal{E}$ is determined uniquely from its height function $\mathfrak{h}$. 
	
	Before proceeding, let us recall the relation between height functions for fused and prefused stochastic higher spin vertex models. To that end, we require some terminology.

	\begin{definition} 
		
		\label{tuxisjr}
		
		Fix sequences of real numbers $\boldsymbol{u} = (u_1, u_2, \ldots )$ and of positive integers $\boldsymbol{J} = (J_1, J_2, \ldots )$. For each $k \ge 1$, set $J_{[1, k]} = \sum_{i = 1}^k J_i$, and define $\boldsymbol{r} = (r_1, r_2, \ldots ) \subset \mathbb{R}$ by setting $r_i = t^{-J_i / 2}$ for each $i \ge 1$. Further set $\boldsymbol{v} = (v_1, v_2, \ldots ) = \bigcup_{k = 0}^{\infty} \{ u_k, t u_k, \ldots , t^{J_k - 1} u_k\}$, that is,  $v_i = t^j u_k$ for each integer $i \ge 1$, where the indices $j = j(i) \in [0, J_k - 1]$ and $k = k(i) \ge 1$ are such that $i = J_{[1, k - 1]} + j$. Letting $\boldsymbol{t}^{-1/2} = (t^{-1/2}, t^{-1/2}, \ldots )$, we call $(\boldsymbol{u}; \boldsymbol{r})$ the \emph{fusion} of $(\boldsymbol{v}, \boldsymbol{t}^{-1/2})$ with respect to $\boldsymbol{J}$. 
		
	\end{definition}

	\begin{figure}
		
		\begin{center} 
			
			\begin{tikzpicture}[
				>=stealth,
				scale = .7
				]
				
				\draw[->,  very thick] (-3, -4.5) -- (2, -4.5);
				\draw[->, very thick] (-3, -4.5) -- (-3, 2.5);

				\draw[] (-3, -3.5) circle [radius = 0] node[left, scale = .8]{$(u_1, t^{-1/2})$};
				\draw[] (-3, -2.5) circle [radius = 0] node[left, scale = .8]{$(t u_1, t^{-1/2})$};
				\draw[] (-3, -1.5) circle [radius = 0] node[left, scale = .8]{$(t^2 u_1, t^{-1 /2})$};
				\draw[] (-3, -.5) circle [radius = 0] node[left, scale = .8]{$(u_2, t^{-1/2})$};
				\draw[] (-3, .5) circle [radius = 0] node[left, scale = .8]{$(u_3, t^{-1/2})$};
				\draw[] (-3, 1.5) circle [radius = 0] node[left, scale = .8]{$(t u_3, t^{-1/2})$};

				\draw[->] (-3, -3.5) -- (-2.1, -3.5);
				\draw[->] (-3, -2.5) -- (-2.1, -2.5);
				\draw[->] (-3, -1.5) -- (-2.1, -1.5);
				\draw[->] (-3, -.5) -- (-2.15, -.5);
				\draw[->] (-3, .5) -- (-2.1, .5);
				\draw[->] (-3, 1.5) -- (-2.1, 1.5);
				
				\draw[->] (-1.9, -3.5) -- (-1.1, -3.5);
				\draw[->] (-1, -3.4) -- (-1, -2.6);
				\draw[->] (-.9, -2.5) -- (-.1, -2.5);
				\draw[->] (0, -2.4) -- (0, -1.65);
				\draw[->] (-2, -2.4) -- (-2, -1.6);
				
				\draw[->] (-1.9, -1.5) -- (-1.1, -1.5);
				\draw[->] (-1, -1.4) -- (-1, -.6);
				\draw[->] (-2, -1.4) -- (-2, -.65);
				\draw[->] (-.95, -.4) -- (-.95, .4);
				\draw[->] (-1.05, -.4) -- (-1.05, .4);
				\draw[->] (-2, -.35) -- (-2, .4);
				\draw[->] (-1.85, -.5) -- (-1.1, -.5);
				\draw[->] (0, -1.35) -- (0, -.6);
				\draw[->] (.1, -.5) -- (.9, -.5);
				\draw[->] (1, -.4) -- (1, .4);
				
				\draw[->] (-1.95, 1.6) -- (-1.95, 2.4);
				\draw[->] (-1.9, .5) -- (-1.1, .5);
				\draw[->] (-2, .6) -- (-2, 1.4);
				\draw[->] (-2.05, 1.6) -- (-2.05, 2.4);
				\draw[->] (-1.05, .6) -- (-1.05, 1.4);
				\draw[->] (-.95, .6) -- (-.95, 1.4);
				\draw[->] (-.9, .5) -- (-.1, .5);
				\draw[->] (-.9, 1.5) -- (-.1, 1.5);
				\draw[->] (0, .6) -- (0, 1.4);
				\draw[->] (-1, 1.6) -- (-1, 2.4); 
				\draw[->] (-.05, 1.6) -- (-.05, 2.4);
				\draw[->] (.05, 1.6) -- (.05, 2.4);
				\draw[->] (1, .6) -- (1, 1.35);
				\draw[->] (1, 1.6) -- (1, 2.4);
				
				\draw[] (-5.5, -3.5) -- (-5.75, -3.5) -- (-5.75, -1.5) -- (-5.5, -1.5);
				\draw[] (-5.5, -.75) -- (-5.75, -.75) -- (-5.75, -.25) -- (-5.5, -.25);
				\draw[] (-5.5, .5) -- (-5.75, .5) -- (-5.75, 1.5) -- (-5.5, 1.5);
				
				\draw[] (-5.75, -2.5) circle [radius = 0] node[left]{$J_1$};
				\draw[] (-5.75, -.5) circle [radius = 0] node[left]{$J_2$};
				\draw[] (-5.75, 1) circle [radius = 0] node[left]{$J_3$};
								
				\filldraw[fill=gray!50!white, draw = black] (-2, -3.5) circle [radius = .1];
				\filldraw[fill=gray!50!white, draw = black] (-1, -3.5) circle [radius = .1]; 
				\filldraw[fill=gray!50!white, draw = black] (0, -3.5) circle [radius = .1]; 
				\filldraw[fill=gray!50!white, draw = black] (1, -3.5) circle [radius = .1];  
				
				\filldraw[fill=gray!50!white, draw = black] (-2, -2.5) circle [radius = .1];
				\filldraw[fill=gray!50!white, draw = black] (-1, -2.5) circle [radius = .1]; 
				\filldraw[fill=gray!50!white, draw = black] (0, -2.5) circle [radius = .1]; 
				\filldraw[fill=gray!50!white, draw = black] (1, -2.5) circle [radius = .1];  
				
				\filldraw[fill=gray!50!white, draw = black] (-2, -1.5) circle [radius = .1];
				\filldraw[fill=gray!50!white, draw = black] (-1, -1.5) circle [radius = .1]; 
				\filldraw[fill=blue, draw = black] (0, -1.5) circle [radius = .15]; 
				\filldraw[fill=gray!50!white, draw = black] (1, -1.5) circle [radius = .1];  
				
				\filldraw[fill=orange, draw = black] (-2, -.5) circle [radius = .15];
				\filldraw[fill=gray!50!white, draw = black] (-1, -.5) circle [radius = .1]; 
				\filldraw[fill=gray!50!white, draw = black] (0, -.5) circle [radius = .1]; 
				\filldraw[fill=gray!50!white, draw = black] (1, -.5) circle [radius = .1];  
				
				\filldraw[fill=gray!50!white, draw = black] (-2, .5) circle [radius = .1];
				\filldraw[fill=gray!50!white, draw = black] (-1, .5) circle [radius = .1]; 
				\filldraw[fill=gray!50!white, draw = black] (0, .5) circle [radius = .1]; 
				\filldraw[fill=gray!50!white, draw = black] (1, .5) circle [radius = .1];  
				
				\filldraw[fill=gray!50!white, draw = black] (-2, 1.5) circle [radius = .1];
				\filldraw[fill=gray!50!white, draw = black] (-1, 1.5) circle [radius = .1]; 
				\filldraw[fill=gray!50!white, draw = black] (0, 1.5) circle [radius = .1]; 
				\filldraw[fill=magenta, draw = black] (1, 1.5) circle [radius = .15];

				\draw[->, very thick] (6, -3) -- (11, -3);
				\draw[->, very thick] (6, -3) -- (6, 1);
				
				\draw[] (6, -2) circle [radius = 0] node[left, scale = .8]{$(u_1, t^{-J_1/2})$};
				\draw[] (6, -1) circle [radius = 0] node[left, scale = .8]{$(u_2, t^{-J_2/2})$};
				\draw[] (6, 0) circle [radius = 0] node[left, scale = .8]{$(u_3, t^{-J_3 / 2})$};
				
				\draw[->] (6, -2.075) -- (6.9, -2.075);
				\draw[->] (6, -2) -- (6.9, -2);
				\draw[->] (6, -1.925) -- (6.9, -1.925);
				
				\draw[->] (6, -1) -- (6.85, -1);
				
				\draw[->] (6, -.05) -- (6.9, -.05);
				\draw[->] (6, .05) -- (6.9, .05);
				
				\draw[->] (7, -1.9) -- (7, -1.15);
				\draw[->] (7.1, -2.05) -- (7.9, -2.05);
				\draw[->] (7.1, -1.95) -- (7.9, -1.95);
				\draw[->] (8, -1.9) -- (8, -1.1);
				\draw[->] (8.1, -2) -- (8.85, -2);
				\draw[->] (9, -1.85) -- (9, -1.1);
				
				\draw[->] (7, -.85) -- (7, -.1);
				\draw[->] (7.15, -1) -- (7.9, -1);
				\draw[->] (7.95, -.9) -- (7.95, -.1);
				\draw[->] (8.05, -.9) -- (8.05, -.1);
				\draw[->] (9.1, -.95) -- (9.9, -.95);
				\draw[->] (10, -.9) -- (10, -.15);
				
				\draw[->] (6.95, .1) -- (6.95, .9);
				\draw[->] (7.1, 0) -- (7.9, 0);
				\draw[->] (7.05, .1) -- (7.05, .9);
				\draw[->] (8, .1) -- (8, .9);
				\draw[->] (8.1, -.05) -- (8.9, -.05);
				\draw[->] (8.1, .05) -- (8.9, .05);
				\draw[->] (9.05, .1) -- (9.05, .9);
				\draw[->] (8.95, .1) -- (8.95, .9);
				\draw[->] (10, .15) -- (10, .9);

				\filldraw[fill=gray!50!white, draw=black] (7, -2) circle [radius=.1];
				\filldraw[fill=gray!50!white, draw=black] (8, -2) circle [radius=.1];
				\filldraw[fill=blue, draw=black] (9, -2) circle [radius=.15];
				\filldraw[fill=gray!50!white, draw=black] (10, -2) circle [radius=.1];

				\filldraw[fill=orange, draw=black] (7, -1) circle [radius=.15];
				\filldraw[fill=gray!50!white, draw=black] (8, -1) circle [radius=.1];
				\filldraw[fill=gray!50!white, draw=black] (9, -1) circle [radius=.1];
				\filldraw[fill=gray!50!white, draw=black] (10, -1) circle [radius=.1];
				
				\filldraw[fill=gray!50!white, draw=black] (7, 0) circle [radius=.1];
				\filldraw[fill=gray!50!white, draw=black] (8, 0) circle [radius=.1];
				\filldraw[fill=gray!50!white, draw=black] (9, 0) circle [radius=.1];
				\filldraw[fill=magenta, draw=black] (10, 0) circle [radius=.15];
				
				\draw[] (8, -3) circle [radius = 0] node[below, scale = .8]{$(\xi_2, s_2)$};
				\draw[] (-1, -4.5) circle [radius = 0] node[below, scale = .8]{$(\xi_2, s_2)$};
			\end{tikzpicture}
			
		\end{center}
		
		\caption{\label{arrowsfused} Shown to the left is a path ensemble from a prefused model, which concatenates to one for a fused model shown on the right. The joint laws of the height function at the corresponding colored vertices coincide.}
		
	\end{figure}
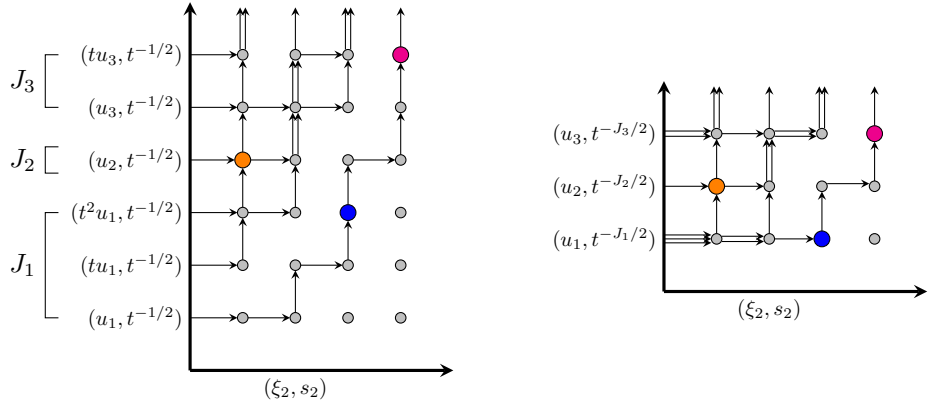

	Under this notation, $\boldsymbol{v}$ is a union of geometric progressions with ratio $t$ started from entries of $\boldsymbol{u}$, with lengths indexed by $\boldsymbol{J}$. The following lemma, essentially originating in \cite{ERT} (though described in the formulation below in \cite{SHSVML,HSVMSRF,ESVM,FSRF}), states that one may view the fused stochastic higher spin vertex model with parameters $(t, \boldsymbol{v}, \boldsymbol{\xi}, \boldsymbol{r}, \boldsymbol{s})$ as obtained from the prefused one with parameters $(t, \boldsymbol{u}, \boldsymbol{\xi}, \boldsymbol{t}^{-1/2}, \boldsymbol{s})$ by ``concatenating'' (or ``fusing'') each family of rows corresponding to a single geometric progression. We refer to \Cref{arrowsfused} for a depiction.

	\begin{lem}[{\cite[Section 5]{HSVMSRF}}]
		
		\label{jmeasure}

		Consider two stochastic higher spin vertex models. The first is prefused under step boundary data with parameters $(t, \boldsymbol{u}, \boldsymbol{\xi}, \boldsymbol{t}^{-1/2}, \boldsymbol{s})$, and the second is fused under $\boldsymbol{J}$-step boundary data with parameters $(t, \boldsymbol{v}, \boldsymbol{\xi}, \boldsymbol{r}, \boldsymbol{s})$; denote their height functions by $\mathfrak{h}_{\PV}$ and $\mathfrak{h}_{\FV}$, respectively. Suppose $(\boldsymbol{v}, \boldsymbol{r})$ is the fusion of $(\boldsymbol{u}, \boldsymbol{t}^{-1/2})$ with respect to $\boldsymbol{J}$. Then, for any vertices $(x_1, y_1), (x_2, y_2), \ldots , (x_m, y_m) \in \mathbb{Z}_{> 0}^2$, the joint laws of $ \big\{ \mathfrak{h}_{\FV} (x_i, y_i) \big\}$ and $\big\{ \mathfrak{h}_{\PV} (x_i, J_{[1, y_i]}) \big\}$ coincide.
		
	\end{lem}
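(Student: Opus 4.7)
The overall strategy is to reduce the statement to the well-known fusion identity (originally due to Kulish--Reshetikhin--Sklyanin, and in this exact form in \cite{ERT,HSVMSRF}), and to verify that both the boundary data and the height function readings match correctly under the block-by-block correspondence. The plan is to sample both models sequentially and identify a block of $J_y$ consecutive rows in the prefused model with the single $y$-th row of the fused model.

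First I would set up the sequential sampling. Using the Markovian update rule from \eqref{configurationprobabilities}, one samples the prefused model one row at a time and the fused model one (fused) row at a time. Group the prefused rows into blocks $B_k = \{J_{[1,k-1]}+1, \ldots, J_{[1,k]}\}$, whose rapidities are exactly the geometric progression $(u_k, tu_k, \ldots, t^{J_k-1}u_k)$. The joint law of the prefused heights $\mathfrak{h}_{\PV}(x_i, J_{[1,y_i]})$ depends only on the arrow configurations at the top of each block $B_k$, i.e., on the flux of vertical arrows exiting the block at each column, together with the horizontal inputs on the left (given by step boundary data, one arrow per row, hence $J_k$ arrows into block $B_k$). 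In particular, the within-block fine structure is irrelevant, and $J_k$ horizontal arrows entering block $B_k$ on the left matches precisely the $\boldsymbol{J}$-step boundary data for the fused model.

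The core step is then the fusion identity: after summing the prefused weights over all internal arrow configurations within a block $B_k$ with rapidities $(u_k, tu_k, \ldots, t^{J_k-1}u_k)$ meeting a single column of spectral/spin parameter $(\xi_y, s_y)$, the resulting transition weight from $\bigl((i_1, j_1)\bigr)$ inputs to $\bigl((i_2, j_2)\bigr)$ outputs equals $L_{u_k \xi_y}(i_1, j_1; i_2, j_2 \boldsymbol{\mid} t^{-J_k/2}, s_y)$. I would prove this by induction on $J_k$. The base case $J_k = 1$ is tautological, as $r_k = t^{-1/2}$ reduces the fused weight to the prefused one. For the inductive step, one appends one extra row of rapidity $t^{J_k} u_k$ to a block that already fuses, sums over the single intermediate vertical occupancy, and applies a $q$-Vandermonde / terminating ${}_4\phi_3$ identity to collapse the resulting ${}_4\phi_3$ sum back into the form of \eqref{lz}. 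The restriction $j_1, j_2 \le J$ emerges automatically from vanishing of the $q$-Pochhammer factors once $r^2 = t^{-J_k}$.

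The main obstacle is the explicit ${}_4\phi_3$ identity that closes the induction: the algebraic manipulation is not short, but it is exactly the computation carried out in \cite{HSVMSRF} and reduces to a classical terminating $q$-hypergeometric transformation. Given this identity, the rest assembles cleanly: by induction on $k$, the joint distribution of all within-block fluxes in the prefused model, conditional on the inputs at row $J_{[1,k-1]}$, equals the product of fused transition weights, which is exactly the joint law of arrow configurations in the fused model under \eqref{configurationprobabilities}. Finally, the height function identification $\mathfrak{h}_{\PV}(x_i, J_{[1,y_i]}) = \mathfrak{h}_{\FV}(x_i, y_i)$ holds pointwise along each realization, because both sides count the number of paths passing through or below $(x_i, \cdot)$ at a horizontal line that sits at the top of a block in the prefused picture and at height $y_i$ in the fused picture; the bijective correspondence between arrow configurations therefore transports the joint law of $\{\mathfrak{h}_{\PV}(x_i, J_{[1,y_i]})\}$ to that of $\{\mathfrak{h}_{\FV}(x_i, y_i)\}$, completing the proof.
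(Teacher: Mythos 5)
Your high-level plan---group the prefused rows into blocks of size $J_k$, show that the block-level arrow data has the same law as the fused model by a one-column fusion computation, and then read off the height function identification---is essentially the route taken in \cite{HSVMSRF}. However, there is a genuine gap in the central step. You assert that "summing the prefused weights over all internal arrow configurations within a block $B_k$\ldots the resulting transition weight from $(i_1,j_1)$ inputs to $(i_2,j_2)$ outputs equals $L_{u_k\xi_y}(i_1,j_1;i_2,j_2\boldsymbol{\mid} t^{-J_k/2}, s_y)$." As phrased this is not well-posed, and under any naive reading it is false: in the prefused block the horizontal input is not a count $j_1$ but a \emph{subset} $S\subseteq B_k$ of rows with $|S|=j_1$, and for $0<j_1<J_k$ the sum over internal states and output patterns depends on $S$, not merely on $j_1$. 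What is true is that the fused weight emerges only after averaging $S$ over a specific $t$-exchangeable distribution; and the structural content of fusion is that this $t$-exchangeability is \emph{preserved} as one passes from column to column, which is a consequence of the Yang--Baxter equation, not of the terminating ${}_4\phi_3$ evaluation you invoke.

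Your observation that step boundary data saturates the input at the first column is exactly what gets the induction started---$S=B_k$ is trivially $t$-exchangeable---but from column $x>1$ onward the horizontal inputs to a block are random, being the outputs of the previous column, and one must show (inductively along the diagonals in the Markovian sampling) that the conditional law of the incoming row-set $S$ given its count is again $t$-exchangeable. That propagation step is the missing ingredient; the ${}_4\phi_3$ identity only closes the one-column computation once the invariance is in hand. Likewise, your induction "append one extra row and sum over the intermediate vertical occupancy" must be formulated so that it tracks not just the counts $(j_1,j_2)$ but the $t$-exchangeable splitting of the horizontal occupancy between the fused sub-block and the appended row; as written it does not control the distribution of the output rows, so the induction does not close. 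Once the $t$-exchangeability is carried along, the rest of your argument---matching the $\boldsymbol{J}$-step boundary data and identifying $\mathfrak{h}_{\PV}(x_i,J_{[1,y_i]})$ with $\mathfrak{h}_{\FV}(x_i,y_i)$ via the block-top vertical fluxes---is correct, so the proposal is salvageable but incomplete as it stands.
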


	\subsection{Macdonald Measures and Vertex Models}
	
	\label{MeasureModel}
	
	We now recall a matching result between observables for the stochastic higher spin vertex models and those for certain Macdonald measures from \cite{SHSVMM}; we begin by recalling the latter measures.
	
	Fix complex parameters $q, t \in \mathbb{C}$ with $|q|, |t| < 1$, and let $\boldsymbol{x} = (x_1, x_2, \ldots )$ denote an infinite set of variables. A \emph{partition} $\lambda = (\lambda_1, \lambda_2, \ldots , \lambda_{\ell})$ is a non-increasing sequence of positive integers. The \emph{length} of any partition $\lambda$ is $\ell (\lambda) = \ell$, and the \emph{size} of $\lambda$ is $|\lambda| = \sum_{i = 1}^{\ell} \lambda_i$. For any integer $i \ge 1$, we let $m_i (\lambda)$ denote the multiplicity of $i$ in $\lambda$, that is, it denotes the number of indices $j \ge 1$ such that $\lambda_j = i$. For any integer $n \ge 0$, let $\mathbb{Y}_n$ denote the set of partitions of size $n$, and let $\mathbb{Y} = \bigcup_{n = 0}^{\infty} \mathbb{Y}_n$ denote the set of all partitions. 
	
	Let $\Lambda_{q, t} (\boldsymbol{x})$ denote the ring of symmetric functions in $\boldsymbol{x}$, with coefficients in $\mathbb{C} (q, t)$. Denote the \emph{power sum symmetric function} $p_{\lambda} (\boldsymbol{x})$ and the \emph{$(q, t)$-deformed complete homogeneous symmetric function} $g_k (\boldsymbol{x}; q, t)$ for each partition $\lambda \in \mathbb{Y}$ and integer $k \ge 0$ by
	\begin{flalign*} 
		p_k (\boldsymbol{x}) = \sum_{x \in \boldsymbol{x}} x^k; \qquad p_{\lambda} (\boldsymbol{x}) = \displaystyle\prod_{j = 1}^{\ell (\lambda)} p_{\lambda_j} (\boldsymbol{x}); \qquad g_k (\boldsymbol{x}; q, t) = \displaystyle\sum_{\lambda \in \mathbb{Y}_k} z_{\lambda} (q, t)^{-1} p_{\lambda} (\boldsymbol{x}),
	\end{flalign*} 

	\noindent where 
	\begin{flalign*} 
		z_{\lambda} (q, t) = \displaystyle\prod_{i = 1}^{\infty} i^{m_i (\lambda)} m_i (\lambda)! \displaystyle\prod_{j = 1}^{\ell (\lambda)} \displaystyle\frac{1 - q^{\lambda_j}}{1 - t^{\lambda_j}}.
	\end{flalign*} 

	\noindent Under this notation, $\big\{ p_{\lambda} (\boldsymbol{x}) \big\}_{\lambda \in \mathbb{Y}}$ is a linear basis of $\Lambda_{q, t} (\boldsymbol{x})$, and $\big\{ p_k (\boldsymbol{x}) \big\}_{k \ge 1}$ and $\big\{ g_k (\boldsymbol{x}; q, t) \big\}_{k \ge 1}$ are both algebraic bases of $\Lambda_{q, t} (\boldsymbol{x})$; see \cite{SFP}.
	
	 Next, we recall the Macdonald symmetric functions $P_{\lambda} (\boldsymbol{x}; q, t) \in \Lambda_{q, t} (\boldsymbol{x})$ and $Q_{\lambda} (\boldsymbol{x}; q, t) \in \Lambda_{q, t} (\boldsymbol{x})$, from (6.4.7) and (6.4.12) of \cite{SFP}, respectively. For any two sequences of variables $\boldsymbol{x}$ and $\boldsymbol{y}$, (6.4.13) of \cite{SFP} implies that Macdonald polynomials satisfy the \emph{Cauchy identity}
	\begin{flalign}
		\label{sumpq} 
		\displaystyle\sum_{\lambda \in \mathbb{Y}} P_{\lambda} (\boldsymbol{x}; q, t) Q_{\lambda} (\boldsymbol{y}; q, t) = \Omega_{q, t} (\boldsymbol{x}; \boldsymbol{y}),
	\end{flalign} 

	\noindent where 
	\begin{flalign*} 
		\Omega_{q, t} (\boldsymbol{x}; \boldsymbol{y}) = \displaystyle\prod_{x \in \boldsymbol{x}} \displaystyle\prod_{y \in \boldsymbol{y}} \displaystyle\frac{(txy; q)_{\infty}}{(xy; q)_{\infty}} = \exp \Bigg( \displaystyle\sum_{k = 1}^{\infty} \displaystyle\frac{1}{k} \displaystyle\frac{1 - t^k}{1 - q^k} p_k (\boldsymbol{x}) p_k (\boldsymbol{y}) \Bigg).
	\end{flalign*} 
	
	To define Macdonald measures in the generality we will eventually use, we require \emph{specializations} of $\Lambda_{q, t} (\boldsymbol{x})$, which are algebra homomorphisms $\rho : \Lambda_{q, t} (\boldsymbol{x}) \rightarrow \mathbb{C}$. For any element $f \in \Lambda_{q, t} (\boldsymbol{x})$, we abbreviate $f (\rho) = \rho \big( f(\boldsymbol{x}) \big)$; for example, $P_{\lambda} (\rho; q, t) = \rho \big( P_{\lambda} (\boldsymbol{x}; q, t) \big)$ and $Q_{\lambda} (\rho; q, t) = \rho \big( Q_{\lambda} (\boldsymbol{x}; q, t\big)$. For any two specializations $\rho_1, \rho_2 : \Lambda_{q, t} (\boldsymbol{x}) \rightarrow \mathbb{C}$, the Cauchy identity \eqref{sumpq} implies 
	\begin{flalign}
		\label{sumpqrho} 
		\displaystyle\sum_{\lambda \in \mathbb{Y}} P_{\lambda} (\rho_1; q, t) Q_{\lambda} (\rho_2; q, t) = \Omega_{q, t} (\rho_1; \rho_2), \quad \text{where $\Omega_{q, t} (\rho_1; \rho_2) = \exp \Bigg( \displaystyle\sum_{k = 1}^{\infty} \displaystyle\frac{1}{k} \displaystyle\frac{1 - t^k}{1 - q^k} p_k (\rho_1) p_k (\rho_2) \Bigg)$}.
	\end{flalign}

	\noindent whenever both sides of \eqref{sumpqrho} converge absolutely.
	
	Under this notation, following Definition 2.2.5 of \cite{P}, we define the \emph{Macdonald measure} $\mathbb{P}_{\MM} = \mathbb{P}_{\MM; \rho_1, \rho_2}$ on $\mathbb{Y}$ by setting
	\begin{flalign*}
		\mathbb{P}_{\MM} [\lambda] = \Omega_{q, t} (\rho_1; \rho_2)^{-1} P_{\lambda} (\rho_1; q, t) Q_{\lambda} (\rho_2, q; t),
	\end{flalign*}

	\noindent for any partition $\lambda \in \mathbb{Y}$; the associated expectation is denoted by $\mathbb{E}_{\MM} = \mathbb{E}_{\MM; \rho_1, \rho_2}$. By \eqref{sumpqrho}, the Macdonald measure is a probability measure assuming that $\rho_1$ and $\rho_2$ are \emph{Macdonald nonnegative}, that is, if $P_{\lambda} (\rho_1; q, t), Q_{\lambda} (\rho_2; q, t) \ge 0$ for each $\lambda \in \mathbb{Y}$. Otherwise, $P_{\MM}$ is a possibly signed measure that sums to one, and we can still consider probabilities and expectations with respect to it. 
	
	\begin{rem}
		
		\label{measure0qt}
		
		Two special cases of the Macdonald measure will be of particular use to us. First, if $q = 0$, then it is the \emph{Hall--Littlewood measure}, denoted by $\mathbb{P}_{\HL} = \mathbb{P}_{\HL; \rho_1, \rho_2}$. Second, if $q = t$, then it is the \emph{Schur measure} of \cite{IWRP}, denoted by $\mathbb{P}_{\SM} = \mathbb{P}_{\SM; \rho_1, \rho_2}$.
		
	\end{rem}
	
	Let us describe certain specializations $(\rho_1, \rho_2)$ that we will use. To define a specialization $\rho$, it suffices to fix $g_k (\rho; q, t)$ for each $k \ge 1$, since $\{ g_k \}_{k \ge 1}$ forms an algebraic basis of $\Lambda_{q, t} (\boldsymbol{x})$. For a real number $\gamma$ and (possibly infinite) sequences of nonnegative real numbers $\boldsymbol{\alpha} = (\alpha_1, \alpha_2, \ldots )$ and $\boldsymbol{\beta} = (\beta_1, \beta_2, \ldots )$ with $\sum_{j = 1}^{\infty} (\alpha_j + \beta_j) < \infty$, we write $\rho = (\boldsymbol{\alpha} \boldsymbol{\mid} \boldsymbol{\beta} \boldsymbol{\mid} \gamma)$ if the $g_k (\rho)$ satisfy
	\begin{flalign}
		\label{sumgkrho}
	\displaystyle\sum_{k = 0}^{\infty} z^k g_k (\rho) = e^{\gamma z} \displaystyle\prod_{j = 1}^{\infty} \displaystyle\frac{(t \alpha_j z; q)_{\infty}}{(\alpha_j z; q)_{\infty}} (1 + \beta_j z).
	\end{flalign}
	
	\noindent If $\gamma = 0$, then we abbreviate $\rho = (\boldsymbol{\alpha} \boldsymbol{\mid} \boldsymbol{\beta})$. Under this notation, we will in this section often set $\rho_1 = (\boldsymbol{x} \boldsymbol{\mid} \boldsymbol{0})$, where $\boldsymbol{0} = (0, 0, \ldots )$ is the sequence of infinitely many entries equal to $0$. 
	
	\begin{rem} 
		
		\label{convergealphagamma}
		
		Let $\boldsymbol{\alpha} = \boldsymbol{\alpha}_{\varepsilon} = (\varepsilon \alpha_1, \varepsilon \alpha_2, \ldots )$ be a sequence of nonnegative real numbers, dependent on a parameter $\varepsilon > 0$, and let $\boldsymbol{\beta} = (\beta_1, \beta_2, \ldots )$ be a sequence of nonnegative real numbers independent of $\varepsilon$. Suppose that $A = \varepsilon \sum_{i = 1}^{\infty} \alpha_i$, $\sum_{j = 1}^{\infty} \beta_j$, and $\max_{i \ge 1} \alpha_i$ are bounded above (independently of $\varepsilon$). Then, by \eqref{sumgkrho}, the specialization $(\boldsymbol{\alpha}_{\varepsilon} \boldsymbol{\mid} \boldsymbol{\beta})$ converges to\footnote{By this, we mean that $\lim_{\varepsilon \rightarrow 0} f( \boldsymbol{\alpha}_{\varepsilon} \boldsymbol {\mid} \boldsymbol{\beta}) = f \big( \boldsymbol{0} \boldsymbol{\mid} \boldsymbol{\beta} \boldsymbol{\mid} \frac{1 - t}{1 - q} A \big)$, for any symmetric function $f \in \Lambda_{q, t} (\boldsymbol{x})$.} $\big( \boldsymbol{0} \boldsymbol{\mid} \boldsymbol{\beta} \boldsymbol{\mid} \frac{1 - t}{1 - q} A \big)$, as $\varepsilon$ tends to $0$.
	\end{rem} 
	
	To equate observables for the stochastic higher spin vertex model and the Macdonald measure, we must impose how the parameters underlying these models are related. This is done through the following definition, which originally appeared in \cite{SHSVMM}.
	
	\begin{definition}[{\cite[Definition 4.1]{SHSVMM}}]
	
	\label{parametersmodel}

		Fix sequences of parameters $(t, \boldsymbol{u}, \boldsymbol{\xi}, \boldsymbol{r}, \boldsymbol{s})$ for a fused stochastic higher spin vertex model, such that $r_i = t^{-J_i / 2}$ for each $i \ge 1$ and some positive integer sequence $\boldsymbol{J} = (J_1, J_2, \ldots ) \subseteq \mathbb{Z}_{> 0}$. Further fix an integer $M \ge 1$ and nonnegative parameter sequences $(\boldsymbol{x}, \boldsymbol{\alpha}, \boldsymbol{\beta})$ for a Macdonald measure, with $\boldsymbol{x} = \big( x_1, x_2, \ldots , x_{J_{[1, N]}} \big)$ and $\sum_{j = 1}^{\infty} (\alpha_i + \beta_j) < \infty$. We say that these parameter sequences \emph{match} if the following conditions are satisfied.
		
		\begin{enumerate}
			\item We have $\boldsymbol{x} = \bigcup_{i = 1}^N \{ t^{1 - J_i} u_i^{-1}, t^{2 - J_i} u_i^{-1}, \ldots , u_i^{-1} \}$. 
			
			\item Denoting for any $z \in \mathbb{R}$ and $h \in \mathbb{Z}_{\ge 0}$ the geometric progression 
			\begin{flalign*}
				\mathcal{G} (z; h) = \{ z, tz, \ldots , t^{h - 1} z \},
			\end{flalign*}
			
			\noindent we may partition $\boldsymbol{\alpha}$ and $\boldsymbol{\beta}$ into disjoint unions of geometric progressions 
			\begin{flalign*}
				\boldsymbol{\alpha} = \bigcup_{i = 1}^m \mathfrak{G} (\widehat{\alpha}_i; h_i); \qquad \boldsymbol{\beta} = \bigcup_{i = 1}^n \mathfrak{G} (\widehat{\beta}_i; h_i),
			\end{flalign*}
		
			\noindent such $m + n = M$ that the following holds.  
			
			\begin{enumerate}
				\item For each $i \in \{ 1, 2, \ldots , m \}$, there exists $j = j(i) \ge 1$ such that $s_j = t^{-h_i / 2}$ and $\xi_j = t^{-h_i / 2} \widehat{\alpha}_i^{-1}$.
				\item For each $i \in \{ 1, 2, \ldots , n \}$, there exists $k = k(i) \ge 1$ such that $s_k = - q^{h_i / 2}$ and $\xi_k = q^{-h_i / 2} \widehat{\beta}_i^{-1}$.
				\item We have that $\big\{ j(1), j(2), \ldots \big\} \cup \big\{ k(1), k(2), \ldots \big\} = \{ 1, 2, \ldots , M \}$. 
			\end{enumerate} 
		\end{enumerate}
		
	\end{definition}  

	\noindent In particular, under this notation, each entry of $\{ s_1, s_2, \ldots , s_M \}$ is a possibly negated power of $t$ or $q$; its positive entries index the lengths of the geometric sequences comprising $\boldsymbol{\alpha}$, and its negative entries index the lengths of those comprising $\boldsymbol{\beta}$. Moreover, under the prefused setting where $J_i = 1$ for each $i \ge 1$, $\boldsymbol{u}$ and $\boldsymbol{x}$ coincide as unordered sets, upon inverting each entry of the latter.
	
	Now we have the following result, which was established in \cite{SHSVMM} (where it was stated in the prefused setting), that equates observables of the fused stochastic higher spin vertex model with a Macdonald measure, assuming their parameter sets match. 
	
	\begin{prop}[{\cite{SHSVMM}}]
		
		\label{pvm} 
		
		Let $t \in \mathbb{C}$ denote a complex number and  $\boldsymbol{u}, \boldsymbol{\xi}, \boldsymbol{r}, \boldsymbol{s}$ be infinite sequences of complex parameters, such that $r_i = t^{-J_i / 2}$ for some positive integer sequence $\boldsymbol{J} = (J_1, J_2, \ldots )$. Further let $M \ge 1$ be an integer and $(\boldsymbol{x}, \boldsymbol{\alpha}, \boldsymbol{\beta})$ be parameters sequences for a Macdonald measure with specializations $\rho_1 = (\boldsymbol{x} \boldsymbol{\mid} \boldsymbol{0})$ and $\rho_2 = (\boldsymbol{\alpha} \boldsymbol{\mid} \boldsymbol{\beta})$. Assume that these parameter sequences match in the sense of \Cref{parametersmodel}. Denoting $K = J_{[1, N]}$, we have for any $\zeta \in \mathbb{C} \setminus \{ -1, -t^{-1}, -t^{-2}, \ldots \}$ that
	\begin{flalign}
		\label{fvexpectation} 
		\mathbb{E}_{\FV} \Bigg[ \displaystyle\frac{1}{(- \zeta t^{\mathfrak{h} (M, N)}; t)_{\infty}} \Bigg] = \mathbb{E}_{\MM} \Bigg[ \displaystyle\frac{1}{(-\zeta; t)_{\infty}} \displaystyle\prod_{j = 0}^{K - 1} (1 + \zeta q^{\lambda_{K - j}} t^j) \Bigg].
	\end{flalign}
	
	\noindent Here, the left side denotes the expectation with respect to the fused stochastic higher spin vertex model with parameters $(t, \boldsymbol{u}, \boldsymbol{\xi}, \boldsymbol{r}, \boldsymbol{s})$, and the right side denotes the expectation with respect to the Macdonald measure with specializations $\rho_1$ and $\rho_2$.
	\end{prop}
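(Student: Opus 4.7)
The plan is to derive \eqref{fvexpectation} by combining the prefused version of the matching identity, which was established in \cite{SHSVMM}, with the fusion correspondence of \Cref{jmeasure}. Since the right-hand side of \eqref{fvexpectation} is insensitive to how the horizontal parameters $\boldsymbol{u}$ are packaged into geometric progressions, the entire argument reduces to tracking parameter matchings under fusion.

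First, I would apply \Cref{jmeasure} to rewrite the left-hand side. Let $\widetilde{\boldsymbol{v}} = (v_1, v_2, \ldots)$ be the sequence obtained by concatenating the geometric progressions $\{u_k, t u_k, \ldots, t^{J_k - 1} u_k\}$ for $k \ge 1$, as in \Cref{tuxisjr}, so that $(\boldsymbol{u}, \boldsymbol{r})$ is the fusion of $(\widetilde{\boldsymbol{v}}, \boldsymbol{t}^{-1/2})$ with respect to $\boldsymbol{J}$. Then \Cref{jmeasure} gives
\[
\mathfrak{h}_{\FV}(M, N) \stackrel{d}{=} \mathfrak{h}_{\PV}(M, K), \qquad K = J_{[1, N]},
\]
where the right-hand height function comes from the prefused stochastic higher spin vertex model with parameters $(t, \widetilde{\boldsymbol{v}}, \boldsymbol{\xi}, \boldsymbol{t}^{-1/2}, \boldsymbol{s})$. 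Consequently, the left-hand side of \eqref{fvexpectation} equals the corresponding expectation taken with respect to this prefused model.

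Next, I would invoke the prefused matching result of \cite{SHSVMM}. For the prefused model, \Cref{parametersmodel} simplifies because every $J_i$ equals $1$; the Macdonald specialization $\rho_1'$ then uses the variables $\boldsymbol{x}' = (v_1^{-1}, v_2^{-1}, \ldots, v_K^{-1})$, while $\rho_2' = (\boldsymbol{\alpha} \boldsymbol{\mid} \boldsymbol{\beta})$ is unchanged. The unordered tuple $\boldsymbol{x}'$ coincides with the set $\boldsymbol{x} = \bigcup_{i=1}^N \{t^{1 - J_i} u_i^{-1}, t^{2 - J_i} u_i^{-1}, \ldots, u_i^{-1}\}$ from \Cref{parametersmodel}, since both enumerate $\{t^{-j} u_i^{-1} : 1 \le i \le N,\ 0 \le j \le J_i - 1\}$. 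As $P_\lambda$ is symmetric, this forces $\rho_1' = \rho_1$, while the second specialization matches verbatim because $\boldsymbol{\xi}$ and $\boldsymbol{s}$ are unaffected by fusion. Applying the prefused matching of \cite{SHSVMM} to this prefused model therefore produces exactly the right-hand side of \eqref{fvexpectation}.

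The main obstacle is purely bookkeeping: one must track how \Cref{parametersmodel}, phrased for the fused setting, reduces on unfusing to the prefused matching condition used in \cite{SHSVMM}, and verify that the distinguished vertex $(M, N)$ of the fused model is sent to $(M, K)$ so that the product $\prod_{j=0}^{K-1}(1 + \zeta q^{\lambda_{K-j}} t^j)$ emerges with exactly $K$ factors. Once these identifications are carried out, \eqref{fvexpectation} follows immediately from the prefused matching together with \Cref{jmeasure}.
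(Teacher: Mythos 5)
Your argument matches the paper's own proof exactly: reduce to the prefused case via \Cref{jmeasure} (which identifies $\mathfrak{h}_{\FV}(M,N)$ with $\mathfrak{h}_{\PV}(M,K)$ in distribution), then apply Corollary~4.4 of \cite{SHSVMM} to the prefused model, checking that the parameter matching of \Cref{parametersmodel} reduces to the prefused one. The paper states this in two sentences; your version simply makes the bookkeeping explicit.
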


	\begin{proof}
		Corollary 4.4 of \cite{SHSVMM} establishes \eqref{fvexpectation} in the prefused case, that is, when $J_i = 1$ for each $i \ge 1$. This, together with \Cref{jmeasure}, establishes the result in general. 
	\end{proof}

	The above proof of \Cref{pvm} ends up being rather heavy as it is based on finding and matching
	explicit integral representations of both sides of \eqref{fvexpectation}. In \Cref{EquationProof} below, we offer a more direct 
	and less formulaic argument that proves \Cref{pvm} in the Schur $q=t$ case. The general 
	Macdonald case then easily follows as well.

	By setting $q = 0$ in \Cref{pvm}, we deduce the following corollary that states a distributional equality between a fused stochastic higher spin vertex model and a Hall--Littlewood measure (recall \Cref{measure0qt}). Let us mention that (a multi-dimensional extension of) this corollary could also be derived from Theorem 4.1 of \cite{SSVMP} (or from the framework of \cite{SSE}).
	
	\begin{cor}
		
		\label{pvm1}
		
		Adopt the notation of \Cref{pvm}. Then, $K - \mathfrak{h} (M, N)$ under the fused stochastic higher spin vertex model with parameters $(t, \boldsymbol{u}, \boldsymbol{\xi}, \boldsymbol{r}, \boldsymbol{s})$ has the same law as $\ell (\lambda)$, sampled under the Hall--Littlewood measure with specializations $(\rho_1, \rho_2)$. 
	\end{cor}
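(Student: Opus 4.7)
The plan is to specialize the identity \eqref{fvexpectation} in \Cref{pvm} to $q=0$ and extract the distributional equality.

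First, I would set $q=0$ on both sides. By \Cref{measure0qt}, the Macdonald measure on the right-hand side degenerates into the Hall--Littlewood measure with the same specializations $(\rho_1, \rho_2)$. The effect on the product in \eqref{fvexpectation} is the key simplification: since $\lim_{q\to 0} q^{\lambda_{K-j}} = \mathbf{1}_{\lambda_{K-j} = 0}$, and $\lambda_{K-j} = 0$ precisely when $K-j > \ell(\lambda)$, i.e.\ when $0 \le j \le K - \ell(\lambda) - 1$, the product collapses to
\begin{flalign*}
	\prod_{j = 0}^{K-1} \bigl(1 + \zeta q^{\lambda_{K - j}} t^j\bigr) \bigg|_{q=0} = \prod_{j=0}^{K - \ell(\lambda) - 1} (1 + \zeta t^j) = (-\zeta; t)_{K - \ell(\lambda)}.
\end{flalign*}
Dividing by $(-\zeta; t)_\infty$, the right-hand side of \eqref{fvexpectation} becomes $\mathbb{E}_{\HL}\bigl[ (-\zeta t^{K - \ell(\lambda)}; t)_\infty^{-1}\bigr]$, so that \Cref{pvm} at $q=0$ reads
\begin{flalign*}
	\mathbb{E}_{\FV} \Bigg[ \frac{1}{(-\zeta t^{\mathfrak{h}(M,N)}; t)_\infty} \Bigg] = \mathbb{E}_{\HL} \Bigg[ \frac{1}{(-\zeta t^{K - \ell(\lambda)}; t)_\infty} \Bigg].
\end{flalign*}

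Next I would argue that this equality of $t$-Laplace-type transforms, valid for all $\zeta$ in the allowed domain, determines the law of the random variable appearing in the exponent. Expanding via the $q$-binomial theorem,
\begin{flalign*}
	\frac{1}{(-\zeta t^n; t)_\infty} = \sum_{k=0}^\infty \frac{(-\zeta)^k}{(t;t)_k} \, t^{nk},
\end{flalign*}
so comparing coefficients of $\zeta^k$ in the two expectations yields $\mathbb{E}_{\FV}[t^{k \mathfrak{h}(M,N)}] = \mathbb{E}_{\HL}[t^{k(K - \ell(\lambda))}]$ for every $k \ge 0$. Both random variables take values in the finite set $\{0, 1, \ldots, K\}$ (for $\ell(\lambda)$ this uses that $P_\lambda(\rho_1;0,t) = 0$ when $\ell(\lambda)$ exceeds the number of variables in $\boldsymbol{x}$, which equals $K$), and the Vandermonde-type matrix $\bigl(t^{jk}\bigr)_{0 \le j,k \le K}$ is invertible for generic $t$. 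Hence the moments $\mathbb{E}[t^{nk}]$ determine the distribution of $n$, giving $\mathfrak{h}(M,N) \stackrel{d}{=} K - \ell(\lambda)$.

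The only step requiring any delicacy is the uniqueness / moment-determinacy argument, and it is easily handled by the finite-support observation above combined with invertibility of the relevant Vandermonde matrix (for $t$ generic, and then by analytic continuation / \Cref{pfvrational} for all admissible $t$). The manipulation of the $q\to 0$ limit inside the product is straightforward since the product is finite and each factor has a clean limit.
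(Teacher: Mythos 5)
Your proof is correct and follows essentially the same route as the paper: set $q = 0$ in \eqref{fvexpectation} to match Hall--Littlewood expectations, expand via the $t$-binomial theorem, and conclude equality in law from equality of moments of $t^{\mathfrak{h}(M,N)}$ and $t^{K-\ell(\lambda)}$. The only (minor) difference is in the last step: the paper invokes moment-determinacy for random variables bounded in $[0,1]$ (Hausdorff moment problem), whereas you observe that both $\mathfrak{h}(M,N)$ and $K - \ell(\lambda)$ take values in the finite set $\{0,\ldots,K\}$ and use invertibility of the $(K+1)\times(K+1)$ Vandermonde matrix with nodes $1, t, \ldots, t^K$; both arguments are valid, and yours is arguably a touch more elementary (pure finite-dimensional linear algebra), at the cost of needing the finite-support observation, which the paper's version avoids.
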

	
	\begin{proof} 
		
		By taking $q = 0$ in \eqref{fvexpectation}, we deduce for any $\zeta \in \mathbb{C} \setminus \{ -1, -t^{-1}, \ldots \}$ that 
		\begin{flalign*}
			\mathbb{E}_{\FV} \bigg[ \displaystyle\frac{1}{(-\zeta t^{\mathfrak{h} (M, N)}; t)_{\infty}} \bigg] = \mathbb{E}_{\HL} \bigg[ \displaystyle\frac{1}{(-\zeta t^{K - \ell (\lambda)}; t)_{\infty}} \bigg],
		\end{flalign*} 
	
		\noindent where we have used \Cref{measure0qt} and the fact that $q^{\lambda_{K - j}} = 0$ unless $j < K - \ell (\lambda)$. This, together with the $t$-binomial theorem, implies 
		\begin{flalign}
			\label{sumzeta} 
			\displaystyle\sum_{j = 0}^{\infty} \displaystyle\frac{\zeta^j}{(t; t)_j} \mathbb{E}_{\FV} [t^{j \mathfrak{h} (M, N)}] = \displaystyle\sum_{j = 0}^{\infty} \displaystyle\frac{\zeta^j}{(t; t)_j} \mathbb{E}_{\HL} [t^{j(K - \ell (\lambda))}].
		\end{flalign} 
	
		\noindent In particular, the coefficients of $\zeta^j$ on either side of \eqref{sumzeta} must coincide for each $j \ge 0$; this implies that all moments of $t^{\mathfrak{h} (M, N)}$ and $t^{K - \ell (\lambda)}$ coincide. Since both are random variables bounded in $[0, 1]$, it follows that they have the same law; so, $\mathfrak{h} (M, N)$ and $K - \ell (\lambda)$ have the same law, from which we deduce the corollary.
	\end{proof}

	\section{Limits of the Fused Weights}
		
	\label{WeightsFused}
	
	In this section we analyze the fused vertex models from \Cref{PathEnsembles} under the limiting regime where the $J_i$ tend to $\infty$ and the $s_i$ each tend to either $0$ or $\infty$. We first explicitly evaluate the limiting $L_z$ vertex weights (from \eqref{lzsr}) in \Cref{LimitWeight1} and then explain an interpretation for the associated vertex model in \Cref{ModelArrow}.

	\subsection{Limiting Weights}

	\label{LimitWeight1} 
	
	In this section we consider limits of the $L_z (i_1, j_1; i_2, j_2 \boldsymbol{\mid} t^{-J / 2}, s)$ vertex weights from \eqref{lzsr} under the regimes where $(J, s) = (\infty, \infty)$ or $(J, s) = (\infty, 0)$ (see \Cref{llimit} and \Cref{llimit2} below, respectively). These quantities will admit limits if both the spectral parameter is of the form $z = t^{-J} s A$ for some fixed  $A \in \mathbb{C}$ and $(j_1, j_2) = (J - h_1, J - h_2)$ for some fixed $(h_1, h_2) \in \mathbb{Z}_{\ge 0}^2$. The first condition is closely related to the initial terms $q^{1 - J_i} u_i^{-1}$ in the sequence $\boldsymbol{x}$ from the first part of \Cref{parametersmodel}, and the second to the appearance of $K - \mathfrak{h} (M, N) = J_{[1, N]} - \mathfrak{h} (M, N)$ in \Cref{pvm1}. Observe that the second condition corresponds to horizontal edges being ``almost saturated'' with arrows (as \eqref{lzsr} implies $L_z (i_1, j_1; i_2, j_2 \boldsymbol{\mid} t^{-J / 2}, s) \ne 0$ only if $j_1, j_2 \le J$). In what follows, for any complex numbers $A, t \in \mathbb{C}$ and integers $i_1, h_1, i_2, h_2 \ge 0$ we define the quantities 
	\begin{flalign}
		\label{psia}
		\begin{aligned}
		\Psi_A (i_1, h_1; i_2, h_2) & = A^{-i_2} t^{i_2 (i_2 + h_1)} \displaystyle\frac{(A^{-1} t^{i_2 + h_1 + 1}; t)_{\infty} (t^{h_2 - i_2 + 1}; t)_{i_2} (t; t)_{h_1}}{(t; t)_{i_2} (t; t)_{h_2}} \\
		& \qquad \times \textbf{1}_{i_1 - h_1 = i_2 - h_2} \displaystyle\sum_{k = 0}^{i_2} (A t)^k \displaystyle\frac{(t^{-i_2}; t)_k (t^{-i_1}; t)_k}{(t; t)_k (t^{h_2 - i_2 + 1}; t)_k},
		\end{aligned} 
	\end{flalign}

	\noindent and 
	\begin{flalign} 
		\label{thetaa} 
		\begin{aligned} 
		\Theta_A (i_1, h_1; i_2, h_2) & = t^{\binom{i_2 + 1}{2} + i_2 h_1} A^{-i_2} \displaystyle\frac{(t^{h_2 - i_2 + 1}; t)_{i_2} (t; t)_{h_1}}{(-t^{h_2 - i_2 + 1} A^{-1}; t)_{\infty} (t; t)_{i_2} (t; t)_{h_2}} \\
		& \qquad \times \textbf{1}_{i_1 + j_1 = i_2 + j_2} \displaystyle\sum_{k = 0}^{i_2} t^k \displaystyle\frac{(t^{-i_1}; t)_k (t^{-i_2}; t)_k (-A; t)_k}{(t; t)_k (t^{h_2 - i_2 + 1}; t)_k}.
		\end{aligned}
	\end{flalign} 

	\begin{rem} 
		
	Observe for any real numbers $t \in (0, 1)$ and $A > 0$, and any integers $i_1, h_1, i_2, h_2 \ge 0$, that $\Psi_A (i_1, h_1; i_2, h_2) \ge 0$. Indeed, for $\Psi_A$, the only possibly negative factors on the right side of \eqref{psia0} are given by $(t^{h_2 - i_2 + 1}; t)_{i_2} (t^{h_2 - i_2 + 1}; t)_k^{-1} = (t^{h_2 - i_2 + k + 1}; t)_{i_2 - k}$ and $(t^{-i_2}; t)_k (t^{-i_1}; t)_k$. The first is nonzero only if $h_2 - i_2 + k \ge 0$, in which case it is positive; the second is also nonnegative since $(t^{-i_1}; t)_k$ and $(t^{-i_2}; t)_k$ are both nonzero only if $k \le \min \{ i_1, i_2 \}$, in which case they are of the same sign $(-1)^k$. Similar reasoning indicates that $\Theta_A (i_1, h_1; i_2, h_2) \ge 0$ under the same conditions. 
	
	\end{rem} 

	\begin{lem}
	
	\label{llimit} 
	
	For any complex numbers $A, t \in \mathbb{C}$ and integers $i_1, h_1, i_2, h_2 \in \mathbb{Z}_{\ge 0}$, we have  
	\begin{flalign*}
		\displaystyle\lim_{J \rightarrow \infty} \bigg( \displaystyle\lim_{s \rightarrow \infty} L_{As / t^J} & (i_1, J - h_1; i_2, J - h_2 \boldsymbol{\mid} q^{-J / 2}, s) \bigg) = \Psi_A (i_1, h_1; i_2, h_2),
	\end{flalign*} 

	\noindent where $\Psi_A$ is defined by \eqref{psia}. 

	\end{lem}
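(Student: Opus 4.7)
The strategy is direct substitution into \eqref{lz} followed by the two successive limits. I will set $z = Ast^{-J}$, $r = t^{-J/2}$, $j_1 = J-h_1$, $j_2 = J-h_2$. Under this substitution, the indicator $\mathbf{1}_{i_1+j_1=i_2+j_2}$ becomes exactly $\mathbf{1}_{i_1-h_1=i_2-h_2}$ as in $\Psi_A$, and the conditions $j_1, j_2 \le J$ hold automatically for $h_1, h_2 \ge 0$. Throughout, I will defer use of the constraint $i_2 = i_1-h_1+h_2$ until the end to keep the intermediate algebra organized.

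To carry out the inner limit $s \to \infty$, observe that the $s$-dependent factors are the prefactor $s^{j_1+j_2-i_2}$, the factor $z^{i_1}$ (contributing an extra $s^{i_1}$), and the Pochhammer symbols $(s^2;t)_{i_2}$, $(sz;t)_{i_2+j_2}$, $(r^{-2}sz;t)_k$, and $(s^2;t)_k$. Applying the asymptotic $(cs^2;t)_n \sim (-cs^2)^n t^{\binom{n}{2}}$ as $s \to \infty$ to each, the net power of $s$ outside the sum is $i_1+(j_1+j_2-i_2)+2i_2-2(i_2+j_2)=i_1+j_1-i_2-j_2$, which vanishes by the constraint. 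Inside the sum, the ratio $(s^2A;t)_k/(s^2;t)_k$ collapses to $A^k$. What remains after this step is an explicit rational-in-$J$ expression.

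For the outer limit $J \to \infty$, the only genuinely singular factor is $(At^{-J};t)_{J-h_2-i_1}$, which I will treat using the inversion identity $(At^{-J};t)_n=(-A)^n t^{-Jn+\binom{n}{2}}(A^{-1}t^{J-n+1};t)_n$ with $n=J-h_2-i_1$; the resulting factor $(A^{-1}t^{h_2+i_1+1};t)_{J-h_2-i_1}$ converges to $(A^{-1}t^{i_2+h_1+1};t)_\infty$, using $h_2+i_1=h_1+i_2$. All other $J$-dependent Pochhammers, namely $(t^{J-h_2-i_1+1};t)_{i_2}$ and $(t^{J+1}/A;t)_k$ in the sum, tend trivially to $1$. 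Collecting powers: the $A$-exponents $i_1-(i_2+J-h_2)+(J-h_2-i_1)$ sum to $-i_2$; the signs combine as $(-1)^{i_1+h_2-J+J-h_2-i_1}=1$; and the $t$-power contributions $\binom{i_1}{2}-i_1h_1$, $J(i_2+J-h_2)+\binom{i_2}{2}-\binom{i_2+J-h_2}{2}$, and $-J(J-h_2-i_1)+\binom{J-h_2-i_1}{2}$ telescope to $i_2(i_2+h_1)$ once the $J^2$ and $J$ terms cancel and $i_1=i_2+h_1-h_2$ is applied. The finite Pochhammer factors outside the sum combine via $(t^{h_1+1};t)_{h_2-h_1}=(t;t)_{h_2}/(t;t)_{h_1}$ (using the standard convention for negative-length Pochhammers when $h_2<h_1$) to produce the prefactor of $\Psi_A$, and the residual sum is exactly the $q$-hypergeometric sum appearing in $\Psi_A$. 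The main obstacle is purely algebraic: ensuring the exact cancellation of all $J$-dependent quantities, especially the $J^2$ contributions hidden in the binomial coefficients.
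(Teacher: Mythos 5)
Your proposal is correct and follows essentially the same route as the paper's own proof: direct substitution $(z, r, j_1, j_2) = (Ast^{-J}, t^{-J/2}, J-h_1, J-h_2)$ into \eqref{lz}, the $s \to \infty$ limit via the asymptotic $(cs^2;t)_n \sim (-cs^2)^n t^{\binom{n}{2}}$, then the $J \to \infty$ limit via the inversion identity for $(At^{-J};t)_{J-k}$, followed by collecting powers of $A$, $t$, and signs. The bookkeeping you outline (net power of $s$ vanishing, $A$-exponents summing to $-i_2$, sign cancellation, and the $t$-power telescope to $i_2(i_2+h_1)$) matches the intermediate identities the paper records explicitly.
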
 

	\begin{proof} 
		
		Throughout, we abbreviate $L = L_{As / t^J} (i_1, J - h_1; i_2, J - h_2 \boldsymbol{\mid} q^{-J / 2}, s)$. If $i_1 - h_1 \ne i_2 - h_2$, then $L = 0$ due to the factor of $\textbf{1}_{i_1 + j_1 = i_2 + j_2}$ in \eqref{lz}, in which case the lemma holds. 
		
		Thus, we will assume in what follows that $i_1 - h_1 = i_2 - h_2$. Then, setting $(u, j_1, j_2) = (As t^{-J}, J - h_1, J - h_2)$ in the definition \eqref{lz} of the $L_z$ weights gives
		\begin{flalign*}
			L & = (-1)^{i_1} t^{\binom{i_1}{2} - i_1 h_1} \displaystyle\frac{A^{i_1} s^{2J - 2h_2} (A t^{-J}; t)_{J - h_2 - i_1}}{(t; t)_{i_2} (As^2 t^{-J}; t)_{i_2 + J - h_2} (t^{h_1 + 1}; t)_{h_2 - h_1}} (t^{J - h_2 - i_1 + 1}; t)_{i_2} \\
			& \qquad \times (s^2; t)_{i_2} (t^{h_2 - i_2 + 1}; t)_{i_2} \displaystyle\sum_{k = 0}^{i_2} t^k \displaystyle\frac{(t^{-i_2}; t)_k (t^{-i_1}; t)_k (A s^2; t)_k (t^{J + 1} A^{-1}; t)_k}{(t; t)_k (s^2; t)_k (t^{J - h_2 - i_1 + 1}; t)_k (t^{h_2 - i_2 + 1}; t)_k},
		\end{flalign*}
		
	\noindent where we have used the fact that $i_1 + j_1 + j_2 - i_2 = 2j_2 = 2J - 2h_2$. Letting $s$ tend to $\infty$ and using the facts that 
	\begin{flalign*}
		\displaystyle\lim_{s \rightarrow \infty} \displaystyle\frac{(As^2 t^{-J}; t)_{i_2 + J - h_2}}{(A s^2 t^{-J})^{i_2 + J - h_2} t^{\binom{i_2 + J - h_2}{2}}} = 1; \qquad \displaystyle\lim_{s \rightarrow \infty} \displaystyle\frac{(s^2; t)_{i_2}}{(-s^2)^{i_2} t^{\binom{i_2}{2}}} = 1; \qquad \displaystyle\lim_{s \rightarrow \infty} \displaystyle\frac{(A s^2; t)_k}{(s^2; t)_k} = A^k,
	\end{flalign*}

	\noindent we find 
	\begin{flalign}
		\label{ls1} 
		\begin{aligned} 
		\displaystyle\lim_{s \rightarrow \infty} L & = (-1)^{i_1 + J + h_2} t^{\binom{i_1}{2} + \binom{i_2}{2} - i_1 h_1} \displaystyle\frac{A^{i_1 - i_2 - J + h_2} (A t^{-J}; t)_{J - h_2 - i_1} (t^{h_2 - i_2 + 1}; t)_{i_2}}{t^{J (h_2 - i_2 - J) + \binom{i_2 + J - h_2}{2}} (t; t)_{i_2} (t^{h_1 + 1}; t)_{h_2 - h_1}} \\
		& \qquad \times (t^{J - h_2 - i_1 + 1}; t)_{i_2} \displaystyle\sum_{k = 0}^{i_2} (At)^k \displaystyle\frac{(t^{-i_2}; t)_k (t^{-i_1}; t)_k (t^{J + 1} A^{-1}; t)_k}{(t; t)_k (t^{J - h_2 - i_1 + 1}; t)_k (t^{h_2 - i_2 + 1}; t)_k},
		\end{aligned}
	\end{flalign}

	\noindent Next, we let $J$ tend to $\infty$. Since for any $k \ge 0$ we have 
	\begin{flalign*}
		(A t^{-J}; t)_{J - k} = (-A)^{J - k} t^{\binom{k + 1}{2} - \binom{J + 1}{2}} (A^{-1} t^{k + 1}; t)_{J - k},
	\end{flalign*}

	\noindent we have 
	\begin{flalign*}
		\displaystyle\lim_{J \rightarrow \infty} \displaystyle\frac{(A t^{-J}; t)_{J - h_2 - i_1}}{(-A)^{J - h_2 - i_1} t^{\binom{i_1 + h_2 + 1}{2} - \binom{J + 1}{2}} (t^{i_1 + h_2 + 1} A^{-1}; t)_{\infty}} = 1.
	\end{flalign*}

	\noindent Inserting this into \eqref{ls1} gives
	\begin{flalign*}
		\displaystyle\lim_{J \rightarrow \infty} \Big( \displaystyle\lim_{s \rightarrow \infty} L \Big) & = A^{-i_2} t^{\binom{i_1}{2} + \binom{i_2}{2} - i_1 h_1} \displaystyle\frac{t^{\binom{i_1 + h_2 + 1}{2} - \binom{J + 1}{2}} (A^{-1} t^{i_1 + h_2 + 1}; t)_{\infty} (t^{h_2 - i_2 + 1}; t)_{i_2}}{t^{J (h_2 - i_2 - J) + \binom{i_2 + J - h_2}{2}} (t; t)_{i_2} (t^{h_1 + 1}; t)_{h_2 - h_1}} \\
		& \qquad \times \displaystyle\sum_{k = 0}^{i_2} (At)^k \displaystyle\frac{(t^{-i_2}; t)_k (t^{-i_1}; t)_k}{(t; t)_k (t^{h_2 - i_2 + 1}; t)_k}.
	\end{flalign*}
	
	\noindent Since 
	\begin{flalign*}
		& J (i_2 - h_2 + J) - \binom{J + 1}{2} - \binom{i_2 + J - h_2}{2} = - \binom{i_2 - h_2}{2} = - \binom{i_1 - h_1}{2}; \\
		& \binom{i_1}{2} - i_1 h_1 - \binom{i_1 - h_1}{2} = -\binom{h_1 + 1}{2}; \qquad i_1 + h_2 = i_2 + h_1,
	\end{flalign*}	
	
	\noindent it follows that 
	\begin{flalign}
		\label{ls2} 
		\begin{aligned}
		\displaystyle\lim_{J \rightarrow \infty} \Big( \displaystyle\lim_{s \rightarrow \infty} L \Big) & = A^{-i_2} t^{\binom{i_2}{2} + \binom{i_2 + h_1 + 1}{2} - \binom{h_1 + 1}{2}} \displaystyle\frac{(A^{-1} t^{i_1 + h_2 + 1}; t)_{\infty} (t^{h_2 - i_2 + 1}; t)_{i_2}}{(t; t)_{i_2} (t^{h_1 + 1}; t)_{h_2 - h_1}} \\
		& \qquad \times \displaystyle\sum_{k = 0}^{i_2} (At)^k \displaystyle\frac{(t^{-i_2}; t)_k (t^{-i_1}; t)_k}{(t; t)_k (t^{h_2 - i_2 + 1}; t)_k}.
		\end{aligned}
	\end{flalign} 
	
	\noindent By the equalities
	\begin{flalign*}
		\binom{i_2}{2} + \binom{i_2 + h_1 + 1}{2} - \binom{h_1 + 1}{2} = i_2 (i_2 + h_1); \qquad (t^{h_1 + 1}; t)_{h_2 - h_1} = \displaystyle\frac{(t; t)_{h_2}}{(t; t)_{h_1}},
	\end{flalign*} 

	\noindent \eqref{ls2} implies the lemma.
	\end{proof}

	\begin{lem} 
		
		\label{llimit2} 
		
		We have that 
		\begin{flalign*}
			\displaystyle\lim_{J \rightarrow \infty} \bigg( \displaystyle\lim_{s \rightarrow 0} L_{-A / st^J} (i_1, J - h_1; i_2, J - h_2 \boldsymbol{\mid} q^{-J / 2}, s) \bigg) = \Theta_A (i_1, h_1; i_2, h_2),
		\end{flalign*} 
	
		\noindent where $\Theta_A$ is defined by \eqref{thetaa}. 
	\end{lem}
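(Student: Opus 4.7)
The proof should follow the same template as the preceding \Cref{llimit}, only with the roles of large/small $s$ (and the corresponding manipulations of singular Pochhammer symbols) swapped. My plan is as follows.

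First, I would dispose of the case $i_1-h_1\ne i_2-h_2$: here $L=0$ identically by the indicator $\mathbf{1}_{i_1+j_1=i_2+j_2}$ in \eqref{lz}, and the right-hand side $\Theta_A$ is $0$ as well, so the claim is trivial. Assume henceforth that $i_1-h_1=i_2-h_2$. Abbreviate $L=L_{-A/(st^J)}(i_1,J-h_1;i_2,J-h_2\mid t^{-J/2},s)$, substitute $z=-A/(st^J)$, $r^{-2}=t^J$, $j_1=J-h_1$, $j_2=J-h_2$ into the definition \eqref{lz}, and simplify the ``pretty'' factors: $sz=-A/t^J$, $r^{-2}sz=-A$, $s^{-1}z=-A/(s^2t^J)$, $t^{j_2-i_1+1}=t^{J-h_2-i_1+1}$, $r^{-2}t^{1-i_2-j_2}=t^{h_2-i_2+1}$, $r^{-2}t^{1-j_1}=t^{h_1+1}$, and $tsz^{-1}=-t^{J+1}s^2/A$.

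Next, I would take $s\to 0$. The only factors that are singular or vanishing at $s=0$ are $s^{j_1+j_2-i_2}=s^{2J-h_1-h_2-i_2}$, $z^{i_1}=(-A)^{i_1}s^{-i_1}t^{-Ji_1}$, the Pochhammer $(-A/(s^2t^J);t)_{J-h_2-i_1}$ (with negative powers of $s$), and the factors $(s^2;t)_{i_2}$, $(-t^{J+1}s^2/A;t)_k$ inside the $k$-sum, both of which tend to $1$. To extract the leading $s$-behavior of the singular Pochhammer, I would apply the reflection identity
\begin{equation*}
(x;t)_n=(-x)^n t^{\binom{n}{2}}(t^{1-n}x^{-1};t)_n,
\end{equation*}
with $x=-A/(s^2t^J)$ and $n=J-h_2-i_1$; this gives
\begin{equation*}
\bigl(-A/(s^2t^J);t\bigr)_{J-h_2-i_1}=\frac{A^{J-h_2-i_1}t^{\binom{J-h_2-i_1}{2}-J(J-h_2-i_1)}}{s^{2(J-h_2-i_1)}}\cdot\bigl(-s^2t^{h_2+i_1}/A;t\bigr)_{J-h_2-i_1},
\end{equation*}
whose final Pochhammer tends to $1$ as $s\to 0$. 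Using $i_1-h_1=i_2-h_2$ one checks that the net power of $s$ is $0$, and I would collect the surviving factors.

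Finally, I would let $J\to\infty$. At this stage only three factors still depend on $J$: the Pochhammer $(-A/t^J;t)_{i_2+J-h_2}$ in the denominator, the factor $(t^{J-h_2-i_1+1};t)_{i_2}$ in the numerator (together with its counterpart inside the $k$-sum), and various explicit powers of $t^J$. The first of these I would again handle with the reflection identity, writing
\begin{equation*}
(-A/t^J;t)_{i_2+J-h_2}=A^{i_2+J-h_2}t^{\binom{i_2+J-h_2}{2}-J(i_2+J-h_2)}(-A^{-1}t^{h_2-i_2+1};t)_{i_2+J-h_2},
\end{equation*}
and the final Pochhammer converges to $(-A^{-1}t^{h_2-i_2+1};t)_\infty$. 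The factor $(t^{J-h_2-i_1+1};t)_{i_2}$ tends to $1$, and correspondingly $(t^{J-h_2-i_1+1};t)_k\to 1$ inside the $k$-sum, so the sum converges to the one in \eqref{thetaa}. Using $(t^{h_1+1};t)_{h_2-h_1}=(t;t)_{h_2}/(t;t)_{h_1}$ and the elementary binomial identity
\begin{equation*}
\binom{i_1}{2}+\tbinom{J-h_2-i_1}{2}+\tbinom{i_2+J-h_2}{2}-i_1h_1-J(J-h_2-i_1)-J(i_2+J-h_2)=\binom{i_2+1}{2}+i_2h_1,
\end{equation*}
which is verified via $i_1-h_1=i_2-h_2$, the remaining power of $t$ matches the prefactor in \eqref{thetaa}, and the powers of $A$ collapse to $A^{-i_2}$.

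The main obstacle, as in \Cref{llimit}, is purely bookkeeping: one has to track a fairly long product of $s^{\pm}$, $t^J$, $A^J$, and $t^{\binom{\cdot}{2}}$ factors and verify that they all conspire correctly, both in the exponent of $s$ (which must vanish) and in the final exponent of $t$ (which must reduce to $\binom{i_2+1}{2}+i_2 h_1$). No new ideas beyond the reflection identity and the relation $i_1-h_1=i_2-h_2$ are needed.
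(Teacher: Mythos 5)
Your plan mirrors the paper's proof essentially step for step: dispose of the trivial case $i_1-h_1\ne i_2-h_2$, substitute, push $s\to 0$, push $J\to\infty$, and verify that the exponential book-keeping closes. The paper states the relevant Pochhammer asymptotics as direct limit formulas rather than invoking a named reflection identity, but the content is identical. Two small computational slips: in the $s\to 0$ step the reflected Pochhammer should read $\bigl(-s^2 t^{h_2+i_1+1}/A;t\bigr)_{J-h_2-i_1}$ (you dropped a $+1$ in the exponent; harmless, as this factor tends to $1$ regardless); and, more importantly, the binomial identity you quote has the wrong signs on the terms coming from $(sz;t)_{i_2+j_2}$. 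Since that Pochhammer sits in the \emph{denominator} of $L$, reflecting it contributes $-\binom{i_2+J-h_2}{2}+J(i_2+J-h_2)$ to the final exponent of $t$, not $+\binom{i_2+J-h_2}{2}-J(i_2+J-h_2)$; as written, your identity fails already at $i_1=i_2=h_1=h_2=0$, $J=1$ (LHS $=-2$, RHS $=0$). With the signs corrected it reads
\begin{flalign*}
\binom{i_1}{2}+\binom{J-h_2-i_1}{2}-\binom{i_2+J-h_2}{2}-i_1h_1-J(J-h_2-i_1)+J(i_2+J-h_2)=\binom{i_2+1}{2}+i_2h_1,
\end{flalign*}
which does hold under $i_1-h_1=i_2-h_2$ and completes the argument exactly as the paper does.
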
 
	
	\begin{proof} 
	
	Similarly to in the proof of \Cref{llimit}, we abbreviate $L = L_{-A / st^J} (i_1, J - h_1; i_2, J - h_2)$. Again, if $i_1 - h_1 \ne i_2 - h_2$, then $L = 0$ due to the factor of $\textbf{1}_{i_1 + j_1 = i_2 + j_2}$ in \eqref{lz}, so the lemma holds. 
	
	Thus, we assume in what follows that $i_1 - h_1 = i_2 - h_2$. Then, setting $(u, j_1, j_2) = (-As^{-1} t^{-J}, J - h_1, J - h_2)$ in \eqref{lz}, we deduce
	\begin{flalign*}
		L & = t^{\binom{i_1}{2} - i_1 h_1} A^{i_1} s^{2J - 2 i_1 - 2 h_2} \displaystyle\frac{(-s^{-2} t^{-J} A; t)_{J - h_2 - i_1} (t^{h_2 - i_2 + 1}; t)_{i_2}}{(t; t)_{i_2} (-t^{-J} A; t)_{J - h_2 + i_2} (t^{h_1 + 1}; t)_{h_2 - h_1}} \\
		& \qquad \times (s^2; t)_{i_2} (t^{J - i_1 - h_2 + 1}; t)_{i_2} \displaystyle\sum_{k = 0}^{i_2} t^k \displaystyle\frac{(t^{-i_1}; t)_k (t^{-i_2}; t)_k (-A; t)_k (s^2 t^{J + 1} A^{-1}; t)_k}{(t; t)_k (s^2; t)_k (t^{J - i_1 - h_2 + 1}; t)_k (t^{h_2 - i_2 + 1}; t)_k},
	\end{flalign*}

	\noindent where we have used the fact that $i_1 - h_1 = i_2 - h_2$. Letting $s$ tend to $0$ and using the fact that 
	\begin{flalign*}
		\displaystyle\lim_{s \rightarrow 0} s^{2 J - 2 i_1 - 2 h_2} (- s^{-2} t^{-J} A; t)_{J - h_2 - i_1} =  t^{J (i_1 + h_2 - J) + \binom{J - h_2 - i_1}{2}} A^{J - h_2 - i_1},
	\end{flalign*}

	\noindent we obtain
	\begin{flalign*}
		\displaystyle\lim_{s \rightarrow 0} L & = t^{\binom{i_1}{2} - i_1 h_1 + J (i_1 + h_2 - J) + \binom{J - i_1 - h_2}{2}} A^{J - h_2} \displaystyle\frac{(t^{h_2 - i_2 + 1}; t)_{i_2}}{(t; t)_{i_2} (-t^{-J} A; t)_{J - h_2 + i_2} (t^{h_1 + 1}; t)_{h_2 - h_1}} \\
		& \qquad \times (t^{J - i_1 - h_2 + 1}; t)_{i_2} \displaystyle\sum_{k = 0}^{i_2} t^k \displaystyle\frac{(t^{-i_1}; t)_k (t^{-i_2}; t)_k (-A; t)_k}{(t; t)_k (t^{J - i_1 - h_2 + 1}; t)_k (t^{h_2 - i_2 + 1}; t)_k}.
	\end{flalign*}

	\noindent Next letting $J$ tend to $\infty$ and using the facts that
	\begin{flalign*}
	 	& \displaystyle\lim_{J \rightarrow \infty} A^{-J} t^{J (J - h_2 + i_2) - \binom{J - h_2 + i_2}{2}}(- t^{-J} A; t)_{J - h_2 + i_2} = A^{i_2 - h_2} (-t^{h_2 - i_2 + 1} A^{-1}; t)_{\infty}; \\
	 	& \binom{i_1}{2} + \binom{J - i_1 - h_2}{2} - \binom{J - h_2 + i_2}{2} = \binom{i_2 + 1}{2} + h_1 (i_1 + i_2) - J (i_1 + i_2),
	\end{flalign*}

	\noindent where the first holds since 
	\begin{flalign*} 
		(-t^{-J} A; t)_{J - k} = t^{J (k - J) + \binom{J - k}{2}} A^{J - k} (-t^{k + 1} A^{-1}; t)_{J - k},
	\end{flalign*} 

	\noindent for any integer $k$, and the second holds since $i_1 - h_1 = i_2 - h_2$, we deduce  
	\begin{flalign*}
		\displaystyle\lim_{J \rightarrow \infty} \Big( \displaystyle\lim_{s \rightarrow 0} L \Big) & = t^{\binom{i_2 + 1}{2} + i_2 h_1} A^{-i_2} \displaystyle\frac{(t^{h_2 - i_2 + 1}; t)_{i_2}}{(-t^{h_2 - i_2 + 1} A^{-1}; t)_{\infty} (t; t)_{i_2} (t^{h_1 + 1}; t)_{h_2 - h_1}} \\
		& \qquad \times \displaystyle\sum_{k = 0}^{i_2} t^k \displaystyle\frac{(t^{-i_1}; t)_k (t^{-i_2}; t)_k (-A; t)_k}{(t; t)_k (t^{h_2 - i_2 + 1}; t)_k}.
	\end{flalign*} 

	\noindent Since 
	\begin{flalign*}
		(t^{h_1 + 1}; t)_{h_2 - h_1} = \displaystyle\frac{(t; t)_{h_2}}{(t; t)_{h_1}},
	\end{flalign*}

	\noindent this implies the lemma.
	\end{proof}

	\subsection{Corresponding Vertex Model}
	
	\label{ModelArrow} 
	
	In this section we explain how to interpret the vertex model associated with the limiting fused vertex weights derived in \Cref{LimitWeight1}. In what follows, for any complex numbers $A, t \in \mathbb{C}$ and integers $i_1, h_1, i_2, h_2 \ge 0$, we recall the quantities $\Psi_A (i_1, h_1; i_2, h_2)$ and $\Theta_A (i_1, h_1; i_2, h_2)$ from \eqref{psia} and \eqref{thetaa}, respectively. We further fix sequences $\boldsymbol{A} = (A_1, A_2, \ldots ) \subset \mathbb{C}$; $\boldsymbol{\omega} = (\omega_1, \omega_2, \ldots ) \subset \mathbb{C}$; and $\boldsymbol{s} = (s_1, s_2, \ldots )$ with $s_i \in \{ 0, \infty \}$ for each $i \ge 1$, such that each $\Psi_{A_y \omega_x} \in [0, 1]$ if $s_y = \infty$ and each $\Theta_{A_y \omega_x} \in [0, 1]$ if $s_y = 0$.
	
	The vertex models discussed here will be obtained as follows. Let $J \ge 1$ be a large integer and $s \in (0, 1)$ be a small real number. We first consider a fused stochastic higher spin vertex model, as in \Cref{PathEnsembles}, with rapidity parameters $(u_y, r_y) = (t^{-J} A_y, t^{-J / 2})$ in the $y$-th row. The rapidity parameters $(\xi_x, s_x)$ in the $x$-th column depend on whether $s_x = \infty$ or $s_x = 0$; if $s_x = \infty$ then set $(\xi_x, s_x) = (s^{-1} \omega_x, s^{-1})$, and otherwise if $s_x = 0$ then set  $(\xi_x, s_x) = (-s^{-1} \omega_x, s)$. Next, we \emph{horizontally complement} this vertex model, that is, we replace any arrow configuration $(i_1, j_1; i_2, j_2)$ with $(i_1, h_1; i_2, h_2) = (i_1, J - j_1; i_2, J - j_2)$. Then, we let $s$ tend to $0$ and $J$ tend to $\infty$. 
	
	Observe under the above complementation that $(J, J, \ldots )$-step boundary data on the quadrant becomes \emph{empty boundary data}, in which no arrows enter through either the $x$-axis or $y$-axis. Still, paths can exist in this model within the interior of the quadrant $\mathbb{Z}_{> 0}^2$. Indeed, due the complementation, the form of arrow conservation satisfied by this model will be $i_1 - h_1 = i_2 -  h_2$ (instead of $i_1 + h_1 = i_2 + h_2)$. As such, vertices admit the possibility to ``create'' two exiting arrows or ``destroy'' two entering ones\footnote{The arrow conservation $i_1 + h_2 = i_2 + h_1$ can alternatively be interpreted as directing paths up-left, instead of up-right.}; see \Cref{arrows2} for an example.

	\begin{figure}
		
		\begin{center} 
			
			\begin{tikzpicture}[
				>=stealth,
				scale = .7
				]
			
				\draw[->, very thick] (6, -3) -- (13, -3);
				\draw[->, very thick] (6, -3) -- (6, 3);
				
				\draw[->] (6, -1.95) -- (6.9, -1.95);
				\draw[->] (6, -2.05) -- (6.9, -2.05);
				
				\draw[->] (6, -.95) -- (6.9, -.95);
				\draw[->] (6, -1.05) -- (6.9, -1.05);
				
				\draw[->] (6, -.05) -- (6.9, -.05);
				\draw[->] (6, .05) -- (6.9, .05);
				
				\draw[->] (6, .95) -- (6.9, .95);
				\draw[->] (6, 1.05) -- (6.9, 1.05);
				
				\draw[->] (6, 1.95) -- (6.9, 1.95);
				\draw[->] (6, 2.05) -- (6.9, 2.05);
				
				\draw[->] (7, -1.9) -- (7, -1.1);
				\draw[->] (7.1, -2) -- (7.9, -2);
				\draw[->] (8.1, -2) -- (8.9, -2);
				\draw[->] (9, -1.9) -- (9, -1.1);
				
				\draw[->] (6.95, -.9) -- (6.95, -.1);
				\draw[->] (7.05, -.9) -- (7.05, -.1);
				\draw[->] (7.1, -1) -- (7.9, -1);
				\draw[->] (8, -.9) -- (8, -.1);
				\draw[->] (9.1, -1) -- (9.9, -1);
				\draw[->] (10, -.9) -- (10, -.1);
				
				\draw[->] (6.925, .1) -- (6.925, .9);
				\draw[->] (6.975, .1) -- (6.975, .9);
				\draw[->] (7.025, .1) -- (7.025, .9);
				\draw[->] (7.075, .1) -- (7.075, .9);
				\draw[->] (8.1, 0) -- (8.9, 0);
				\draw[->] (9, .1) -- (9, .9);
				\draw[->] (10, .1) -- (10, .9);
				
				\draw[->] (6.925, 1.1) -- (6.925, 1.9);
				\draw[->] (6.975, 1.1) -- (6.975, 1.9);
				\draw[->] (7.075, 1.1) -- (7.075, 1.9);
				\draw[->] (7.1, .95) -- (7.9, .95);
				\draw[->] (7.025, 1.1) -- (7.025, 1.9);
				\draw[->] (7.1, 1.05) -- (7.9, 1.05);
				\draw[->] (8.1, .95) -- (8.9, .95);
				\draw[->] (8.1, 1.05) -- (8.9, 1.05);
				\draw[->] (8.95, 1.1) -- (8.95, 1.9);
				\draw[->] (9.05, 1.1) -- (9.05, 1.9);
				\draw[->] (9.1, 1) -- (9.9, 1);
				\draw[->] (10, 1.1) -- (10, 1.9);
				\draw[->] (10.1, 1) -- (10.9, 1);
				\draw[->] (11, 1.1) -- (11, 1.9);
				
				\draw[->] (6.925, 2.1) -- (6.925, 2.9);
				\draw[->] (6.975, 2.1) -- (6.975, 2.9);
				\draw[->] (7.025, 2.1) -- (7.025, 2.9);
				\draw[->] (7.075, 2.1) -- (7.075, 2.9);
				\draw[->] (7.1, 1.95) -- (7.9, 1.95);
				\draw[->] (7.1, 2.05) -- (7.9, 2.05);
				\draw[->] (7.95, 2.1) -- (7.95, 2.9);
				\draw[->] (8.05, 2.1) -- (8.05, 2.9);
				\draw[->] (9, 2.1) -- (9, 2.9);
				\draw[->] (9.1, 2) -- (9.9, 2);
				\draw[->] (10, 2.1) -- (10, 2.9);
				\draw[->] (10.1, 2) -- (10.9, 2);
				\draw[->] (11, 2.1) -- (11, 2.9);
				\draw[->] (11.1, 2) -- (11.9, 2);
				\draw[->] (12, 2.1) -- (12, 2.9);
				
				\filldraw[fill=gray!50!white, draw=black] (7, -2) circle [radius=.1];
				\filldraw[fill=gray!50!white, draw=black] (8, -2) circle [radius=.1];
				\filldraw[fill=gray!50!white, draw=black] (9, -2) circle [radius=.1];
				\filldraw[fill=gray!50!white, draw=black] (10, -2) circle [radius=.1];
				\filldraw[fill=gray!50!white, draw=black] (11, -2) circle [radius=.1];
				\filldraw[fill=gray!50!white, draw=black] (12, -2) circle [radius=.1];
				
				\filldraw[fill=gray!50!white, draw=black] (7, -1) circle [radius=.1];
				\filldraw[fill=gray!50!white, draw=black] (8, -1) circle [radius=.1];
				\filldraw[fill=gray!50!white, draw=black] (9, -1) circle [radius=.1];
				\filldraw[fill=gray!50!white, draw=black] (10, -1) circle [radius=.1];
				\filldraw[fill=gray!50!white, draw=black] (11, -1) circle [radius=.1];
				\filldraw[fill=gray!50!white, draw=black] (12, -1) circle [radius=.1];
				
				\filldraw[fill=gray!50!white, draw=black] (7, 0) circle [radius=.1];
				\filldraw[fill=gray!50!white, draw=black] (8, 0) circle [radius=.1];
				\filldraw[fill=gray!50!white, draw=black] (9, 0) circle [radius=.1];
				\filldraw[fill=gray!50!white, draw=black] (10, 0) circle [radius=.1];
				\filldraw[fill=gray!50!white, draw=black] (11, 0) circle [radius=.1];
				\filldraw[fill=gray!50!white, draw=black] (12, 0) circle [radius=.1];
				
				\filldraw[fill=gray!50!white, draw=black] (7, 1) circle [radius=.1];
				\filldraw[fill=gray!50!white, draw=black] (8, 1) circle [radius=.1];
				\filldraw[fill=gray!50!white, draw=black] (9, 1) circle [radius=.1];
				\filldraw[fill=gray!50!white, draw=black] (10, 1) circle [radius=.1];
				\filldraw[fill=gray!50!white, draw=black] (11, 1) circle [radius=.1];
				\filldraw[fill=gray!50!white, draw=black] (12, 1) circle [radius=.1];
				
				\filldraw[fill=gray!50!white, draw=black] (7, 2) circle [radius=.1];
				\filldraw[fill=gray!50!white, draw=black] (8, 2) circle [radius=.1];
				\filldraw[fill=gray!50!white, draw=black] (9, 2) circle [radius=.1];
				\filldraw[fill=gray!50!white, draw=black] (10, 2) circle [radius=.1];
				\filldraw[fill=gray!50!white, draw=black] (11, 2) circle [radius=.1];
				\filldraw[fill=gray!50!white, draw=black] (12, 2) circle [radius=.1];

				\draw[->, very thick] (16, -3) -- (23, -3);
				\draw[->, very thick] (16, -3) -- (16, 3);
				
				\draw[->] (17, -1.9) -- (17, -1.1);
				\draw[->] (17.1, -2) -- (17.9, -2);
				\draw[->] (18.1, -2) -- (18.9, -2);
				\draw[->] (19.1, -2.05) -- (19.9, -2.05);
				\draw[->] (19.1, -1.95) -- (19.9, -1.95);
				\draw[->] (20.1, -2.05) -- (20.9, -2.05);
				\draw[->] (20.1, -1.95) -- (20.9, -1.95);
				\draw[->] (21.1, -2.05) -- (21.9, -2.05);
				\draw[->] (21.1, -1.95) -- (21.9, -1.95);
				\draw[->] (22.1, -2.05) -- (22.9, -2.05);
				\draw[->] (22.1, -1.95) -- (22.9, -1.95);

				\draw[->] (19, -1.9) -- (19, -1.1);
				
				\draw[->] (16.95, -.9) -- (16.95, -.1);
				\draw[->] (17.05, -.9) -- (17.05, -.1);
				\draw[->] (17.1, -1) -- (17.9, -1);
				\draw[->] (18, -.9) -- (18, -.1);
				\draw[->] (19.1, -1) -- (19.9, -1);
				\draw[->] (20, -.9) -- (20, -.1);
				
				\draw[->] (18.1, -1.05) -- (18.9, -1.05);
				\draw[->] (18.1, -.95) -- (18.9, -.95);
				\draw[->] (20.1, -1.05) -- (20.9, -1.05);
				\draw[->] (20.1, -.95) -- (20.9, -.95);
				\draw[->] (21.1, -1.05) -- (21.9, -1.05);
				\draw[->] (21.1, -.95) -- (21.9, -.95);
				\draw[->] (22.1, -1.05) -- (22.9, -1.05);
				\draw[->] (22.1, -.95) -- (22.9, -.95);
				
				\draw[->] (19.1, -.05) -- (19.9, -.05);
				\draw[->] (19.1, .05) -- (19.9, .05);
				\draw[->] (20.1, -.05) -- (20.9, -.05);
				\draw[->] (20.1, .05) -- (20.9, .05);
				\draw[->] (21.1, -.05) -- (21.9, -.05);
				\draw[->] (21.1, .05) -- (21.9, .05);
				\draw[->] (22.1, -.05) -- (22.9, -.05);
				\draw[->] (22.1, .05) -- (22.9, .05);
				
				\draw[->] (22.1, .95) -- (22.9, .95);
				\draw[->] (22.1, 1.05) -- (22.9, 1.05);
				\draw[->] (21.1, .95) -- (21.9, .95);
				\draw[->] (21.1, 1.05) -- (21.9, 1.05);
				
				\draw[->] (22.1, 1.95) -- (22.9, 1.95);
				\draw[->] (22.1, 2.05) -- (22.9, 2.05);
				
				\draw[->] (16.925, .1) -- (16.925, .9);
				\draw[->] (16.975, .1) -- (16.975, .9);
				\draw[->] (17.025, .1) -- (17.025, .9);
				\draw[->] (17.075, .1) -- (17.075, .9);
				\draw[->] (18.1, 0) -- (18.9, 0);
				\draw[->] (19, .1) -- (19, .9);
				\draw[->] (20, .1) -- (20, .9);
				
				\draw[->] (16.925, 1.1) -- (16.925, 1.9);
				\draw[->] (17.025, 1.1) -- (17.025, 1.9);
				\draw[->] (17.075, 1.1) -- (17.075, 1.9);
				\draw[->] (17.1, -.05) -- (17.9, -.05);
				\draw[->] (16.975, 1.1) -- (16.975, 1.9);
				\draw[->] (17.1, .05) -- (17.9, .05);
				\draw[->] (18.95, 1.1) -- (18.95, 1.9);
				\draw[->] (19.05, 1.1) -- (19.05, 1.9);
				\draw[->] (19.1, 1) -- (19.9, 1);
				\draw[->] (20, 1.1) -- (20, 1.9);
				\draw[->] (20.1, 1) -- (20.9, 1);
				\draw[->] (21, 1.1) -- (21, 1.9);
				
				\draw[->] (16.925, 2.1) -- (16.925, 2.9);
				\draw[->] (16.975, 2.1) -- (16.975, 2.9);
				\draw[->] (17.025, 2.1) -- (17.025, 2.9);
				\draw[->] (17.075, 2.1) -- (17.075, 2.9);
				
				\draw[->] (17.95, 2.1) -- (17.95, 2.9);
				\draw[->] (18.05, 2.1) -- (18.05, 2.9);
				
				\draw[->] (18.1, 1.95) -- (18.9, 1.95);
				\draw[->] (18.1, 2.05) -- (18.9, 2.05);
				\draw[->] (19, 2.1) -- (19, 2.9);
				\draw[->] (19.1, 2) -- (19.9, 2);
				\draw[->] (20, 2.1) -- (20, 2.9);
				\draw[->] (20.1, 2) -- (20.9, 2);
				\draw[->] (21, 2.1) -- (21, 2.9);
				\draw[->] (21.1, 2) -- (21.9, 2);
				\draw[->] (22, 2.1) -- (22, 2.9);
				
				\filldraw[fill=gray!50!white, draw=black] (17, -2) circle [radius=.1];
				\filldraw[fill=gray!50!white, draw=black] (18, -2) circle [radius=.1];
				\filldraw[fill=gray!50!white, draw=black] (19, -2) circle [radius=.1];
				\filldraw[fill=gray!50!white, draw=black] (20, -2) circle [radius=.1];
				\filldraw[fill=gray!50!white, draw=black] (21, -2) circle [radius=.1];
				\filldraw[fill=gray!50!white, draw=black] (22, -2) circle [radius=.1];
				
				\filldraw[fill=gray!50!white, draw=black] (17, -1) circle [radius=.1];
				\filldraw[fill=gray!50!white, draw=black] (18, -1) circle [radius=.1];
				\filldraw[fill=gray!50!white, draw=black] (19, -1) circle [radius=.1];
				\filldraw[fill=gray!50!white, draw=black] (20, -1) circle [radius=.1];
				\filldraw[fill=gray!50!white, draw=black] (21, -1) circle [radius=.1];
				\filldraw[fill=gray!50!white, draw=black] (22, -1) circle [radius=.1];
				
				\filldraw[fill=gray!50!white, draw=black] (17, 0) circle [radius=.1];
				\filldraw[fill=gray!50!white, draw=black] (18, 0) circle [radius=.1];
				\filldraw[fill=gray!50!white, draw=black] (19, 0) circle [radius=.1];
				\filldraw[fill=gray!50!white, draw=black] (20, 0) circle [radius=.1];
				\filldraw[fill=gray!50!white, draw=black] (21, 0) circle [radius=.1];
				\filldraw[fill=gray!50!white, draw=black] (22, 0) circle [radius=.1];
				
				\filldraw[fill=gray!50!white, draw=black] (17, 1) circle [radius=.1];
				\filldraw[fill=gray!50!white, draw=black] (18, 1) circle [radius=.1];
				\filldraw[fill=gray!50!white, draw=black] (19, 1) circle [radius=.1];
				\filldraw[fill=gray!50!white, draw=black] (20, 1) circle [radius=.1];
				\filldraw[fill=gray!50!white, draw=black] (21, 1) circle [radius=.1];
				\filldraw[fill=gray!50!white, draw=black] (22, 1) circle [radius=.1];
				
				\filldraw[fill=gray!50!white, draw=black] (17, 2) circle [radius=.1];
				\filldraw[fill=gray!50!white, draw=black] (18, 2) circle [radius=.1];
				\filldraw[fill=gray!50!white, draw=black] (19, 2) circle [radius=.1];
				\filldraw[fill=gray!50!white, draw=black] (20, 2) circle [radius=.1];
				\filldraw[fill=gray!50!white, draw=black] (21, 2) circle [radius=.1];
				\filldraw[fill=gray!50!white, draw=black] (22, 2) circle [radius=.1];

			\end{tikzpicture}
			
		\end{center}
		
		\caption{\label{arrows2}  Shown to the left is a vertex model with $\boldsymbol{J} = (2, 2, \ldots )$ and $\boldsymbol{J}$-step boundary data. Shown to the right is its horizontal complementation.} 
	\end{figure}
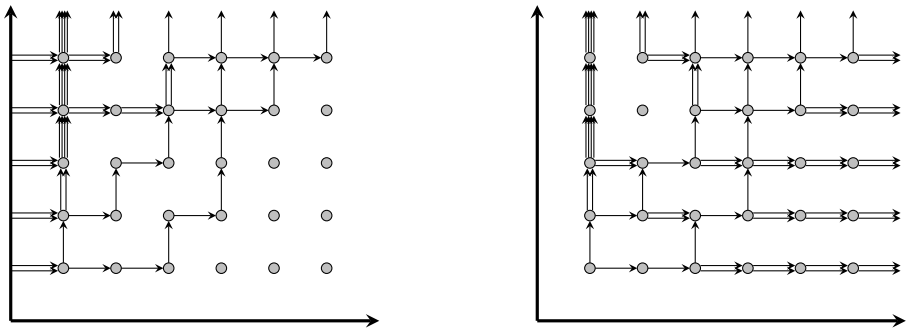

	Using the weights $\Psi_A$ and $\Theta_A$, we can explicitly sample this limiting complemented path ensemble as in \Cref{PathEnsembles}, namely, by randomly assigning arrow configurations to vertices in triangles of the form $\mathbb{T}_n = \big\{ (x, y) \in \mathbb{Z}_{> 0}^2 : x + y = n \}$. As previously, to extend an assignment from $\mathbb{T}_n$ to $\mathbb{T}_{n + 1}$, we must explain how to sample the last two coordinates $(i_2, h_2)_{(x, y)}$ of an arrow configuration at any vertex $(x, y) \in \mathbb{D}_n = \big\{ (x, y) \in \mathbb{Z}_{> 0}^2 : x + y = n + 1 \big\}$, given the first two coordinates $(i_1, h_1)_{(x, y)}$. This is done by producing $(i_2, h_2)_{(x, y)}$ from $(i_1, h_1)_{(x, y)}$ according to the transition probabilities
	\begin{flalign*}
		\mathbb{P} \big[ (i_2, h_2)_{(x, y)} \big| (i_1, h_1)_{(x, y)} \big] = \Psi_{A_y \omega_x} (i_1, h_1; i_2, h_2), \quad & \text{if $s_x = \infty$}; \\
		\mathbb{P} \big[ (i_2, h_2)_{(x, y)} \big| (i_1, h_1)_{(x, y)} \big] = \Theta_{A_y \omega_x} (i_1, h_1; i_2, h_2), \quad & \text{if $s_x = 0$}.
	\end{flalign*}

	\noindent That we use $\Psi_{A_y \omega_x}$ as a probability if $s_x = \infty$ and $\Theta_{A_y \omega_x}$ as one if $s_x = 0$ is in accordance with the limits considered in \Cref{llimit} and \Cref{llimit2}, respectively.
	
	Letting $n$ tend to $\infty$ then yields a random (horizontally complemented) path ensemble on all of $\mathbb{Z}_{> 0}^2$, which is the vertex model corresponding to the limit weights derived in \Cref{LimitWeight1}.

	\section{A $t$-Deformed Polynuclear Growth Model} 
	
	\label{Modelq} 
	
	In this section we apply a further limit, as the $A_i$ parameters to $\infty$, to the vertex model described in \Cref{ModelArrow}. By further taking $s_x = \infty$ for each $x$, we will see in \Cref{Growthq} this gives rise to a $t$-deformation of the polynuclear growth (PNG) model. Then, in \Cref{BoundaryTheta}, we will explain how one can incorporate boundary conditions in this growth model by taking the first several $s_x$ to be $0$. Throughout this section, we fix parameters $t \in [0, 1)$ and $\theta > 0$.

	\subsection{The $t$-PNG Model} 
	
	\label{Growthq}
	
	In this section we analyze a limit of the model considered in \Cref{ModelArrow} as the $A_y$ tend to $\infty$, which will give rise to a $t$-PNG model. Here, we take $s_x = \infty$ for each $x \ge 1$. To implement this limit, let $\varepsilon \in (0, 1)$ denote some parameter (which we will eventually let tend to $0$), and set 
	\begin{flalign}
		\label{ai2} 
		A_y = \frac{t}{1 - t} (\varepsilon \theta)^{-1}, \quad \text{for each integer $y \ge 1$}; \qquad \omega_x = (\varepsilon \theta)^{-1}, \quad \text{for each integer $x \ge 1$}.
	\end{flalign} 
	
	\noindent Further set 
	\begin{flalign}
		\label{ai} 
		A = A_y \omega_x = \displaystyle\frac{t}{1 - t} (\varepsilon \theta)^{-2}.
	\end{flalign}

	\noindent Let us analyze the weights $\Psi_A$ under \eqref{ai} for small $\varepsilon$.

	\begin{lem}
		
		\label{psia0} 
		
		The following statements hold for $\Psi_A (i_1, h_1; i_2, h_2)$ under \eqref{ai}. 
		
		\begin{enumerate}
			\item For $(i_1, h_1) = (0, 0)$, we have
			\begin{flalign}
				\label{psia00} 
				\Psi_A (0, 0; 0, 0) = 1 - \mathcal{O} (\varepsilon^2); \quad \Psi_A (0, 0; 1, 1) = (\varepsilon \theta)^2 - \mathcal{O} (\varepsilon^4); \quad \displaystyle\sum_{k = 2}^{\infty} \big| \Psi_A (0, 0; k, k) \big| = \mathcal{O} (\varepsilon^4). 
			\end{flalign}
			\item For $(i_1, h_1) = (1, 0)$ or $(i_1, h_1) = (0, 1)$, we have
			\begin{flalign}
				\label{psia10psia01}
				\begin{aligned} 
				&\Psi_A (1, 0; 1, 0) = 1 - \mathcal{O} (\varepsilon^2); \qquad \displaystyle\sum_{k = 2}^{\infty} \Psi_A (1, 0; k + 1, k) = \mathcal{O} (\varepsilon^2); \\
				& \Psi_A (0, 1; 0, 1) = 1 - \mathcal{O} (\varepsilon^2); \qquad \displaystyle\sum_{k = 2}^{\infty} \Psi_A (0, 1; k, k + 1) = \mathcal{O} (\varepsilon^2). 
				\end{aligned} 
			\end{flalign}
			\item For $(i_1, h_1) = (1, 1)$, we have
			\begin{flalign}
				\label{psia11}
				\Psi_A (1, 1; 0, 0) = 1 - t - \mathcal{O} (\varepsilon^2); \quad \Psi_A (1, 1; 1, 1) = t - \mathcal{O} (\varepsilon^2); \quad \displaystyle\sum_{k = 2}^{\infty} \Psi_A (1, 1; k, k) = \mathcal{O} (\varepsilon^2). 
			\end{flalign}
		\end{enumerate} 
	\end{lem}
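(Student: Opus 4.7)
The plan is to substitute each specified value of $(i_1, h_1)$ directly into the definition \eqref{psia} of $\Psi_A$, exploit the drastic truncation of the inner $k$-sum coming from the factors $(t^{-i_1};t)_k$ and $(1;t)_k$, and then expand in $\varepsilon$ using $A^{-1} = \frac{1-t}{t}(\varepsilon\theta)^2$, which makes $A^{-j} = O(\varepsilon^{2j})$ and $(A^{-1}z;t)_\infty = 1 + O(\varepsilon^2)$ for any fixed $z$, uniformly on bounded sets.

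First I would invoke arrow conservation $i_1 - h_1 = i_2 - h_2$ to parametrize allowed outgoing configurations by a single integer $m \ge 0$: in the $(0,0)$ and $(1,1)$ cases one has $(i_2,h_2) = (m,m)$; in the $(1,0)$ case $(m+1, m)$; and in the $(0,1)$ case $(m, m+1)$. To avoid the spurious $0/0$ caused by factors like $(t^{h_2-i_2+1};t)_{i_2}$ in the prefactor when $h_2 - i_2 + 1 \le 0$, I would regroup the $h_2$-dependent Pochhammers as
\begin{equation*}
\frac{(t^{h_2-i_2+1};t)_{i_2}}{(t^{h_2-i_2+1};t)_k} = (t^{h_2-i_2+k+1};t)_{i_2-k},
\end{equation*}
which is a polynomial in $t$ and manifestly finite. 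The inner $k$-sum then truncates at $k = \min(i_1, i_2)$ because $(t^{-i_1};t)_k$ vanishes for $k > i_1$, leaving at most two nonzero terms when $i_1 \in \{0,1\}$. In particular, for $(i_1,h_1) \in \{(0,0), (0,1)\}$ only the $k = 0$ summand survives and one obtains explicit closed forms such as $\Psi_A(0,0;m,m) = A^{-m}t^{m^2}(A^{-1}t^{m+1};t)_\infty/(t;t)_m$.

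Expanding these closed forms in $\varepsilon$ is then a routine calculation. For the bulk terms, the leading constants $1$, $t$, $1-t$, $(\varepsilon\theta)^2$ drop out after substituting $A^{-1}t = (1-t)(\varepsilon\theta)^2$; for example $\Psi_A(0,0;1,1) = A^{-1}t(A^{-1}t^2;t)_\infty/(1-t) = (\varepsilon\theta)^2 + O(\varepsilon^4)$, and the $k=0,1$ contributions to $\Psi_A(1,1;1,1)$ combine to $(t + A^{-1}t^2)(A^{-1}t^3;t)_\infty = t + O(\varepsilon^2)$. For the tail bounds $\sum_{m \ge 2}$, the factor $A^{-m}$ supplies geometric decay in $m$, so the entire tail is controlled by its first term; for example $\sum_{m \ge 2} \Psi_A(0,0;m,m) \le C A^{-2} = O(\varepsilon^4)$.

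The main obstacle, modest as it is, will be bookkeeping the many powers of $t$ and correctly handling the degenerate Pochhammers (noting $(1;t)_k = 0$ for $k \ge 1$, which is what kills all but the $k=0$ term in the $(0,0)$ and $(0,1)$ cases, and $(t^{-1};t)_k = 0$ for $k \ge 2$, which truncates the $(1,0)$ and $(1,1)$ cases). As a consistency check, I would verify that $\sum_{(i_2, h_2)} \Psi_A(i_1, h_1; i_2, h_2) = 1$, which follows by passing to the limit in the stochasticity identity \eqref{lzsum1} for the $L_z$ weights before sending $s \to \infty$ and $J \to \infty$ as in \Cref{llimit}; this forces the three bounds in \eqref{psia00}, and analogously \eqref{psia10psia01} and \eqref{psia11}, to be mutually compatible.
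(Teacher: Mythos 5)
Your proposal is correct and follows essentially the same route as the paper: substitute $(i_1,h_1)$ into \eqref{psia}, use $(1;t)_k=0$ for $k\ge 1$ and $(t^{-1};t)_k=0$ for $k\ge 2$ to truncate the inner sum, expand the resulting closed forms in $\varepsilon$ via $A^{-1}=O(\varepsilon^2)$, and bound the tails by the geometric decay in $A^{-m}$ (the paper invokes the $t$-binomial theorem for this, which is equivalent to your bound). The paper writes out only the $(0,0)$ case in detail and omits the others as analogous, exactly as you indicate; your stochasticity consistency check is a sensible sanity check but is not needed for the argument.
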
 
	
	\begin{proof}
		
		Let us show \eqref{psia00}. To that end, we insert $(i_1, h_1) = (0, 0)$ into \eqref{psia} to obtain
		\begin{flalign}
			\label{psiaih}
			\Psi_A (0, 0; i_2, h_2) = A^{-i_2} t^{i_2^2} \displaystyle\frac{(A^{-1} t^{i_2 + 1}; t)_{\infty}}{(t; t)_{i_2}} \textbf{1}_{h_1 = h_2 - i_2},
		\end{flalign} 
	
		\noindent where we have used the facts $h_1 = h_1 - i_1 = h_2 - i_2$ for $i_1 = 0$; that the sum on the right side of \eqref{psia} is supported on the term $k = 0$, due to the factor of $(t^{-i_1}; t)_k$ there; and the fact that $(t^{h_2 - i_2 + 1}; t)_{i_2} (t; t)_{h_1} = (t; t)_{h_2}$, since $h_2 = h_2 + i_1 = i_2 + h_1$. Setting $i_2 = 0$ or $i_2 = 1$, we find 
		\begin{flalign*} 
			& \Psi_A (0, 0; 0, 0) = (A^{-1} t; t)_{\infty} = 1 - \mathcal{O} (A^{-1}) = 1 - \mathcal{O} (\varepsilon^2); \\
			& \Psi_A (0, 0; 1, 1) = A^{-1} \displaystyle\frac{t}{1 - t} (A^{-1} t^2; t)_{\infty} = (\varepsilon \theta)^2 - \mathcal{O} (A^{-2}) = (\varepsilon \theta)^2 - \mathcal{O} (\varepsilon^4),
		\end{flalign*} 
	
		\noindent where for both statements we used the expression \eqref{ai} for $A$. This verifies the first and second statements of \eqref{psia00}. To verify the last, observe by \eqref{psiaih} that $\Psi_A (0, 0; k, k) = \mathcal{O} \big( A^{-k} (t; t)_k^{-1} \big)$ (where the implicit constant only depends on $t$ and not on $k$), yielding by the $t$-binomial theorem that
		\begin{flalign*}
			\displaystyle\sum_{k = 2}^{\infty} \Psi_A (0, 0; k, k) = \displaystyle\sum_{k = 2}^{\infty} \mathcal{O} \bigg( \displaystyle\frac{A^{-k}}{(t; t)_k} \bigg) = \mathcal{O} \bigg( \displaystyle\frac{A^{-2}}{(A^{-1}; t)_{\infty}}  \bigg) = \mathcal{O} (A^{-2}) = \mathcal{O} (\varepsilon^4).
		\end{flalign*}

		\noindent This establishes \eqref{psia00}; the proofs of \eqref{psia10psia01} and \eqref{psia11} are very similar and therefore omitted.
	\end{proof}

	Next, fix real numbers $\chi, \eta > 0$ and define the integers $X = X_{\varepsilon}$ and $Y = Y_{\varepsilon}$ by
	\begin{flalign}
		\label{xy} 
		X = \lceil \varepsilon^{-1} \chi \rceil; \qquad Y = \lceil \varepsilon^{-1} \eta \rceil. 
	\end{flalign}

	\noindent Let us use \Cref{psia0} to interpret the small $\varepsilon$ limit vertex model from \Cref{ModelArrow} on the rectangle $[1, X] \times [1, Y] \subset \mathbb{Z}_{> 0}^2$, under empty boundary data, with $(\xi_x, s_x) = (\omega_x, \infty)$ for each $x \ge 1$ and $A_y$ defined by \eqref{ai2}. Since $A = A_y \omega_x$, the first statement of \eqref{psia00} implies that, if $(i_1, h_1) = (0, 0)$, then with probability about $1 - \varepsilon^2 \theta^2$ we have $(i_2, h_2) = (0, 0)$. In view of the empty boundary data, this indicates that most vertices in $[1, X] \times [1, Y]$ have arrow configuration $(0, 0; 0, 0)$. However, with probability about $\varepsilon^2 \theta^2$, we have $(i_2, h_2) = (1, 1)$; in this case, a pair of exiting arrows is created, or \emph{nucleates}. Since $[1, X] \times [1, Y]$ constitutes $\mathcal{O} (\varepsilon^{-2})$ vertices, there are $\mathcal{O}(1)$ such nucleation events; in the limit as $\varepsilon$ tends to $0$, they become distributed according to a Poisson point process with intensity $\theta^2$.
	
	If $(i_1, h_1) = (1, 0)$, then \eqref{psia10psia01} implies that $(i_2, h_2) = (1, 0)$ almost deterministically. In particular, whenever a vertical exiting arrow is created it proceeds vertically until $h_1 \ne 0$, that is, until it meets a horizontal arrow. Similarly, if $(i_1, h_1) = (0, 1)$ then $(i_2, h_2) = (1, 0)$ almost deterministically, meaning that any created horizontal arrow proceeds horizontally until it meets a vertical arrow. 
	
	The event $(i_1, h_1) = (1, 1)$ corresponds to the collision of a horizontal and vertical arrow. Then, \eqref{psia11} implies that $(i_2, h_2) = (0, 0)$ with probability about $1 - t$; in this case, the two arrows annihilate each other. With the complementary probability of about $t$, we have $(i_2, h_2) = (1, 1)$, in which case the arrows pass through each other. This description gives rise to the following growth model.
	
	\begin{definition}
		
		\label{modelq} 
		
		The \emph{$t$-deformed polynuclear growth ($t$-PNG) model} on the rectangle $\mathcal{R} = \mathcal{R}_{\chi; \eta} = [0, \chi] \times [0, \eta] \subset \mathbb{R}^2$, with intensity $\theta^2$, is described as follows. 
		
		\begin{enumerate}
			\item Sample a Poisson point process with intensity $\theta^2$ on $\mathcal{R}$, denoted by $\mathcal{V} = \{ v_1, v_2, \ldots , v_K \}$, where $v_i = (x_i, y_i) \in \mathcal{R}$ for each index $i \in [1, K]$. 
			
			\item For each point $v \in \mathcal{V}$, draw two rays emanating from $v$, one directed north and the other directed east. Whenever two rays emanating from different vertices meet, the following occurs.
			\begin{enumerate}
				\item With probability $1 - t$, they annihilate each other.
				\item With probability $t$, they pass through each other.
			\end{enumerate}
		\end{enumerate}
	\end{definition}
	
	We refer to \Cref{model1} for a depiction. 
	
	\begin{rem}
		
		\label{modelquadrant} 
		
		Observe for $\chi > \chi' > 0$ and $\eta > \eta' > 0$ that the $t$-PNG models on $\mathcal{R}_{\chi; \eta}$ and $\mathcal{R}_{\chi'; \eta'}$ are consistent in the following sense. The restriction to $\mathcal{R}_{\chi'; \eta'}$ of the $t$-PNG model on $\mathcal{R}_{\chi; \eta}$ is the $t$-PNG model on $\mathcal{R}_{\chi'; \eta'}$. Therefore, one may take the limit as $\chi$ and $\eta$ tend to $\infty$ to obtain a $t$-PNG model on the (infinite) nonnegative quadrant $\mathbb{R}_{\ge 0}^2$.
		
	\end{rem}

	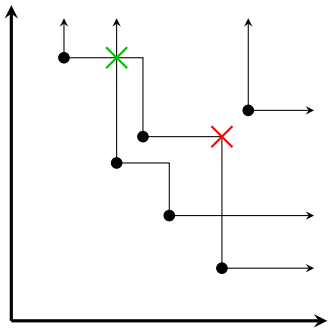
\begin{figure}
		
		\begin{center} 
			
			\begin{tikzpicture}[
				>=stealth,
				scale = .7
				]
				
				\draw[->, very thick] (0, 0) -- (0, 6);
				\draw[->, very thick] (0, 0) -- (6, 0);
				
				\draw[->] (3, 2) -- (5.75, 2);
				\draw[->] (3, 2) -- (3, 3) -- (2, 3) -- (2, 5.75);
				\draw[->] (4.5, 4) -- (5.75, 4);
				\draw[->] (4.5, 4) -- (4.5, 5.75);
				\draw[->] (1, 5) -- (2.5, 5) -- (2.5, 3.5) -- (4, 3.5) -- (4, 1) -- (5.75, 1);
				\draw[->] (1, 5) -- (1, 5.75);
				
				\draw[-, thick, green!75!black] (1.8, 4.8) -- (2.2, 5.2);
				\draw[-, thick, green!75!black] (2.2, 4.8) -- (1.8, 5.2);
				
				\draw[-, thick, red] (3.8, 3.3) -- (4.2, 3.7);
				\draw[-, thick, red] (4.2, 3.3) -- (3.8, 3.7);

				\filldraw[fill=black, draw=black] (4, 1) circle [radius=.1];
				\filldraw[fill=black, draw=black] (3, 2) circle [radius=.1];
				\filldraw[fill=black, draw=black] (2, 3) circle [radius=.1];
				\filldraw[fill=black, draw=black] (1, 5) circle [radius=.1];
				\filldraw[fill=black, draw=black] (2.5, 3.5) circle [radius=.1];
				\filldraw[fill=black, draw=black] (4.5, 4) circle [radius=.1];
				
			\end{tikzpicture}
			
		\end{center}
		
		\caption{\label{model1} Shown above is a possible sample of the $t$-PNG model. The nucleation events are the black points; the red cross is a location where two paths annihilate each other; and the green cross is a location where two paths cross through each other.}
	\end{figure}

	Under the $t$-PNG model for $t = 0$, two colliding rays must always annihilate each other. In this case, the model coincides with the standard PNG model analyzed in \cite{SIDP}; see also Section 4 of \cite{IP}. This $t = 0$ model is known to admit an interpretation through patience sorting \cite{LSPST}; we will provide an analogous interpretation for the more general $t$-PNG model in \Cref{ModelSort} below. 
	
	The following proposition more precisely states the convergence of the vertex model from \Cref{ModelArrow} to the $t$-PNG model. A heuristic for it was provided above \Cref{modelq}; we will give a more careful proof in \Cref{ConvergeModel} below.

	\begin{prop}
		
	\label{modelconverge} 
	
	Fix real numbers $\chi, \eta > 0$, and define $X, Y \in \mathbb{Z}_{> 0}$ as in \eqref{xy}. Consider the vertex model described in \Cref{ModelArrow} on the rectangle $[1, X] \times [1, Y] \subseteq \mathbb{Z}_{> 0}^2$, under empty boundary data, with $s_x = \infty$ for each $x \ge 1$ and $A_y$ chosen according to \eqref{ai}. When both its coordinates are multiplied by $\varepsilon$, this model converges, as $\varepsilon$ tends to $0$, to the $t$-PNG model with intensity $\theta^2$ on $\mathcal{R}_{\chi; \eta}$ from \Cref{modelq}. 
	\end{prop}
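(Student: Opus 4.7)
The plan is to realize both processes on a common probability space and show that, with probability tending to $1$ as $\varepsilon\to 0$, their arrow/ray structures agree under the scaling $(x,y)\mapsto(\varepsilon x,\varepsilon y)$. The estimates in \Cref{psia0} already supply the transition probabilities to leading order on every local type of vertex; the work is to patch these local statements into a global comparison on a rectangle containing $O(\varepsilon^{-2})$ vertices.

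First I would use \Cref{psia0} to show that, outside an event of total probability $O(\varepsilon^2)$, the only arrow configurations realized in $[1,X]\times[1,Y]$ are the six simple ones $(0,0;0,0)$, $(0,0;1,1)$, $(1,0;1,0)$, $(0,1;0,1)$, $(1,1;0,0)$ and $(1,1;1,1)$. Indeed each vertex contributes $O(\varepsilon^4)$ error probability to this event (from the tail bounds in \eqref{psia00}, \eqref{psia10psia01} and \eqref{psia11}), and summing over $XY=O(\varepsilon^{-2})$ vertices gives $O(\varepsilon^2)$. On this favourable event the complemented ensemble is a collection of directed north and east rays: rays are created in pairs at nucleation vertices where $(0,0;1,1)$ occurs; each ray propagates deterministically along a column (resp.\ row) of $(1,0;1,0)$ (resp.\ $(0,1;0,1)$) vertices until it meets a perpendicular ray; and at each such collision the two rays annihilate with probability $1-t+O(\varepsilon^2)$ or pass through each other with probability $t+O(\varepsilon^2)$.

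Next I would identify the limiting Poisson structure. Before any ray has reached a given vertex $(x,y)$ its entering configuration is $(0,0)$, so on the favourable event the sites with nucleation events form an i.i.d.\ Bernoulli thinning of the lattice with parameter $\Psi_A(0,0;1,1)=(\varepsilon\theta)^2+O(\varepsilon^4)$; after the scaling $(x,y)\mapsto(\varepsilon x,\varepsilon y)$, the classical Poisson limit theorem yields weak convergence to a Poisson point process of intensity $\theta^2$ on $\mathcal{R}_{\chi;\eta}$. At each collision the outcome is sampled independently with bias $t+O(\varepsilon^2)$, so those Bernoullis can be coupled directly with the ones in \Cref{modelq}. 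Combined with deterministic straight-line propagation of rays between nucleations and collisions, the entire random geometric configuration in the rescaled vertex model converges in distribution to the one produced by \Cref{modelq}.

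The main obstacle is a careful bookkeeping of the error terms. Since the crossing/annihilation bias is only correct up to $O(\varepsilon^2)$ per collision, one needs tightness of the total number of collisions as $\varepsilon\to 0$; this follows from the Poisson limit above because the number of collisions in $[1,X]\times[1,Y]$ is bounded by the (tight) number of ordered pairs of nucleations in $\mathcal{R}_{\chi;\eta}$. One must also verify that the sequential vertex-by-vertex sampling rule of \Cref{ModelArrow} genuinely yields independent Bernoulli nucleations on the favourable event (a standard coupling with an i.i.d.\ Bernoulli grid together with the fact that the $(i_1,h_1)=(0,0)$ input is a stopping property of the path ensemble suffices), and that the map from the Poisson input and crossing Bernoullis to the final ray configuration in \Cref{modelq} is continuous at almost every realization. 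The careful coupling and union-bound argument implementing this programme are deferred to \Cref{ConvergeModel}.
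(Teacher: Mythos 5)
Your strategy — read off the single-vertex transition probabilities from \Cref{psia0}, argue that non-simple configurations are negligible, establish a Poisson limit for nucleations, and couple the collision Bernoullis — is in the same spirit as the paper's proof in \Cref{ConvergeModel}, but it contains a genuine gap that the paper's inductive Lemma (\Cref{bv}) is specifically built to close.

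To begin with a smaller inaccuracy: the claim that ``each vertex contributes $O(\varepsilon^4)$ error probability'' is not what \Cref{psia0} gives. Only vertices with entering configuration $(i_1,h_1)=(0,0)$ have a tail bound $\mathcal{O}(\varepsilon^4)$; vertices with entering configuration $(1,0)$, $(0,1)$, or $(1,1)$ have error $\mathcal{O}(\varepsilon^2)$ per \eqref{psia10psia01} and \eqref{psia11}. Since on a typical configuration the rays cover $\mathcal{O}(\varepsilon^{-1})$ vertices, the union bound over those gives total error $\mathcal{O}(\varepsilon)$ rather than $\mathcal{O}(\varepsilon^2)$; this is still $o(1)$, but only once the count $\mathcal{O}(\varepsilon^{-1})$ is actually justified. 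That count is the real difficulty and cannot simply be deduced from the Poisson limit, because the Poisson limit is itself established only on the favourable event that no vertex produces an output with $\max\{i_2,h_2\}\ge 2$ — and whether such a vertex appears depends on exactly the number of active (nonzero-input) vertices that you are trying to control. If such an ``explosion'' occurred, additional rays would be created and the nucleation and collision counts would no longer be tight. You need an a priori bound, not a post-hoc tightness claim. The paper resolves this circularity by defining the good events $\mathscr{B}_D$ (no explosions through diagonal $D$) and the counts $V_D$ (active vertices on diagonal $D$) and proving by induction along diagonals that $\mathbb{E}[\textbf{1}_{\mathscr{B}_D}V_D]\le CD^2\varepsilon^2$ and $1-\mathbb{P}[\mathscr{B}_D]\le 2C\varepsilon^4 D^3$; this bootstrapping is what closes the argument. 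Your remark that ``$(i_1,h_1)=(0,0)$ is a stopping property'' gestures at the right sequential structure, but without the explicit diagonal-by-diagonal bookkeeping the proof does not go through.

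A secondary, more stylistic difference: the paper first couples the actual ensemble $\mathcal{E}$ to an intermediate discrete vertex model $\mathcal{F}$ with the idealized weights $\Phi$ of \eqref{probabilityf}, and only then passes $\mathcal{F}$ to the continuum $t$-PNG model; you propose to couple directly to the Poisson inputs and Bernoulli crossings. The two-step route cleanly separates the union-bound estimate (comparing $\Psi_A$ to $\Phi$) from the elementary Bernoulli-to-Poisson scaling limit, and it is what makes the quantitative coupling transparent.
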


		Although we will not pursue this here, it should also be possible to introduce a multi-layer version of this $t$-PNG model through Dynamics 8 of \cite{NNDP} and a colored variant of this model (along the same lines of the colored particle systems analyzed in \cite{SVMST}). Let us also mention that it should be possible to derive this $t$-PNG model as a limit of either the $t$-Push TASEP \cite{NNDP,IMTD} or a more intricate version of the $t$-PNG model defined in Section 4.2 of \cite{AP}.

	\subsection{Boundary Conditions} 
	
	\label{BoundaryTheta} 
	
	In this section we fix an integer $m \ge 1$ and a sequence of $m$ positive real numbers $\boldsymbol{\beta} = (\beta_1, \beta_2, \ldots , \beta_m)$; further let $\varepsilon > 0$ be a small real parameter. We then consider the vertex model as described in \Cref{ModelArrow}, with parameters given by
	\begin{flalign}
		\label{msomega} 
		& (s_x, \omega_x) = (0, \beta_j^{-1}), \quad \text{for each $x \in [1, m]$}; \qquad (s_x, \omega_x) = \big( \infty, (\varepsilon \theta)^{-1} \big) \quad \text{for each $x > m$},
	\end{flalign} 

	\noindent and 
	\begin{flalign}
		\label{a2y}
		A_y = \displaystyle\frac{t}{1 - t} (\varepsilon \theta)^{-1}, \quad \text{for each integer $i \ge 1$}.
	\end{flalign}

	\noindent In particular, for $x > m$, we have $A_y \omega_x = A$ from \eqref{ai}. Thus, the dynamics of this model to the right of its $m$-th column are governed by the $\Psi_A$ weights, whose small $\varepsilon$ asymptotics are given by \Cref{psia0} (and give rise to the $t$-PNG model as in \Cref{modelq}, under scaling by $\varepsilon$). 
	
	For $x \le m$, we have $s_x = 0$ so, at and to the left of the $m$-th column, this model is governed by the $\Theta$ weights defined by \eqref{thetaa}. We will see these different $s_x$ parameters in the leftmost $m$ column of the model will give rise to a boundary condition for the $t$-PNG model; a similar phenomenon in the context of the asymmetric simple exclusion process (ASEP) and stochastic six-vertex model was observed in \cite{PTASSVM}.
	
	The below lemma analyzes the asymptotics of the $\Theta$ weights as $\varepsilon$ tends to $0$.

	\begin{lem}
		
		\label{theta0} 
		
		Fix a real number $\beta > 0$ and an integer $i \ge 0$. Denoting $B = t (1 - t)^{-1} (\varepsilon \theta \beta)^{-1}$, we have the following.
		
		\begin{enumerate} 
			
			\item For $(i_1, h_1) = (i, 0)$, we have
		\begin{flalign}
			\label{thetai0b}
			\begin{aligned}
			\Theta_B (i, 0; i, 0) = 1 & - \mathcal{O} (\varepsilon); \qquad \Theta_B (i, 0; i + 1, 0) = \varepsilon \theta \beta + \mathcal{O} (\varepsilon^2); \\
			& \displaystyle\sum_{k = 2}^{\infty} \Theta_B (i, 0; i + k, k) = \mathcal{O} (\varepsilon^2).
			\end{aligned} 
		\end{flalign} 
		
			\item For $(i_1, h_1) = (i, 1)$, we have
			\begin{flalign} 
				\label{thetai1b}
				\begin{aligned}
			 \Theta_B (i, 1; i - 1, 0) &= 1 - t^i - \mathcal{O} (\varepsilon); \qquad \Theta_B (i, 1; i, 1) = t^i - \mathcal{O} (\varepsilon); \\
			&  \displaystyle\sum_{k = 2}^{\infty} \Theta_B (i, 1; i  + k - 1, k) = \mathcal{O} (\varepsilon).
			\end{aligned}
			\end{flalign}

		\end{enumerate} 
		
	\end{lem}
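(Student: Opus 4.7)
The strategy is to mirror the proof of \Cref{psia0}: for each stipulated tuple $(i_1, h_1; i_2, h_2)$, I would substitute into the defining formula \eqref{thetaa} for $\Theta_B$ and extract the leading behavior as $\varepsilon \to 0$, equivalently $B \to \infty$. The essential asymptotic inputs are $(-B; t)_k = B^k t^{\binom{k}{2}} \bigl( 1 + \mathcal{O}(B^{-1}) \bigr)$ for each fixed $k \ge 0$, $(-t^a B^{-1}; t)_\infty = 1 + \mathcal{O}(B^{-1})$ for each fixed integer $a$, and the scaling $B^{-1} = (1 - t) t^{-1} \varepsilon \theta \beta$.

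The main technical subtlety, not present in \Cref{psia0}, is that the prefactor $(t^{h_2 - i_2 + 1}; t)_{i_2}$ in \eqref{thetaa} vanishes whenever $0 \le h_2 < i_2$, and the companion factor $(t^{h_2 - i_2 + 1}; t)_k$ in the denominator of the $k$-sum vanishes precisely when $k \ge i_2 - h_2$. I would handle this $0/0$ cancellation via the identity
\begin{equation*}
\frac{(t^{h_2 - i_2 + 1}; t)_{i_2}}{(t^{h_2 - i_2 + 1}; t)_k} = (t^{h_2 - i_2 + k + 1}; t)_{i_2 - k},
\end{equation*}
which is nonzero precisely when $k \ge i_2 - h_2$. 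Together with the constraint $(t^{-i_1}; t)_k \ne 0 \iff k \le i_1$ and arrow conservation $i_1 - h_1 = i_2 - h_2$, the effective summation range collapses to $k \in [i_2 - h_2, \min(i_1, i_2)]$, which in each of the listed cases reduces to one or two surviving indices.

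For each leading-order case I would perform the substitution at these surviving indices and simplify using standard identities such as $(t^{-m}; t)_k = (-1)^k t^{\binom{k}{2} - mk} (t; t)_m / (t; t)_{m - k}$, valid for $m \ge k$. After collecting powers of $t$ and $B$, the claimed constants emerge: the leading $1$ in part~(1); the factor $\varepsilon \theta \beta = (1 - t) t^{-1} B^{-1}$; the factor $1 - t^i$ arising from a $(t; t)_i / (t; t)_{i - 1}$ cancellation in the first case of part~(2); and the factor $t^i$ in the second case of part~(2), which results from combining the two surviving terms at $k = i - 1$ and $k = i$. For the tail estimates $\sum_{k \ge 2}$, the same substitution shows each summand is $\mathcal{O}(\varepsilon^k)$ times a combinatorial coefficient controlled uniformly in $k$ by a factor comparable to $(t; t)_k^{-2}$; the $t$-binomial theorem then collapses the tail into a convergent geometric series and produces the claimed $\mathcal{O}(\varepsilon^2)$ and $\mathcal{O}(\varepsilon)$ bounds. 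The main obstacle, in my view, is the delicate case-by-case bookkeeping of powers of $t$ following the $0/0$ cancellation; once that is carried out, the remaining arithmetic is mechanical.
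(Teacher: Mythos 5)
Your proposal follows essentially the same route as the paper: substitute into \eqref{thetaa}, invoke the $B\to\infty$ asymptotics of the $t$-Pochhammer factors, and absorb the $0/0$ between the prefactor $(t^{h_2-i_2+1};t)_{i_2}$ and the denominators $(t^{h_2-i_2+1};t)_k$ via the cancellation identity you write down; the tail estimate via the $t$-binomial theorem likewise matches the part the paper omits. The paper organizes the computation by first proving a general $\mathcal{O}(\varepsilon)$-level formula (equation \eqref{thetab}) and then specializing to the three cases that need only leading order, with a separate exact calculation for the one statement requiring $\mathcal{O}(\varepsilon^2)$ precision; your case-by-case plan is the same arithmetic in different packaging.

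Two slips are worth flagging. First, inverting $B = t(1-t)^{-1}(\varepsilon\theta\beta)^{-1}$ gives $\varepsilon\theta\beta = t(1-t)^{-1}B^{-1}$, not $(1-t)t^{-1}B^{-1}$ as you wrote. Second, and more substantively, the asserted coefficient $t^i$ in $\Theta_B(i,1;i,1) = t^i - \mathcal{O}(\varepsilon)$ does not arise from ``combining'' the two nonvanishing indices $k=i-1$ and $k=i$: after multiplying the $k$-sum by $B^{-i_2}$, one has $B^{-i_2}(-B;t)_k = \mathcal{O}(B^{k-i_2})$, so only $k = i_2$ contributes at order one and the $k=i_2-1$ term is already $\mathcal{O}(\varepsilon)$, absorbed into the error. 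This is precisely what the paper records in its display $B^{-i_2}(-B;t)_k\mathbf{1}_{k\le i_2} = \mathbf{1}_{k=i_2}t^{\binom{i_2}{2}}+\mathcal{O}(\varepsilon)$, and it --- not the nonvanishing range $[i_2-h_2,\min(i_1,i_2)]$ --- is what selects the dominant term (indeed, for the $\varepsilon\theta\beta$ weight the lone surviving index $k=i$ sits strictly below $i_2=i+1$, which is exactly why that weight is $\mathcal{O}(\varepsilon)$ rather than $\mathcal{O}(1)$). You should state this normalization observation explicitly before ``collecting powers of $B$''; otherwise the nonvanishing-range bookkeeping can mislead.
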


	\begin{proof}
		
		By the choice of $B$, we have  
		\begin{flalign*}
			(-t^{h_2 - i_2 + 1} B^{-1}; t)_{\infty} = 1 + \mathcal{O} (\varepsilon); \qquad B^{-i_2} (B; t)_k \textbf{1}_{k \le i_2}= \textbf{1}_{k = i_2} t^{\binom{i_2}{2}} + \mathcal{O} (\varepsilon),
		\end{flalign*} 
	
		\noindent which when combined with the identity $t^{\binom{i_2}{2} + i_2} (t^{-i_2}; t)_{i_2} = (-1)^{i_2} (t; t)_{i_2}$ gives  
		\begin{flalign*}
			\displaystyle\frac{B^{-i_2}}{(-t^{h_2 - i_1 + 1} B^{-1}; t)_{\infty}} \displaystyle\sum_{k = 0}^{i_2} t^k \displaystyle\frac{(t^{-i_1}; t)_k (t^{-i_2}; t)_k (-B; t)_k}{(t; t)_k (t^{h_2 - i_2 + 1}; t)_k} = (-1)^{i_2} \displaystyle\frac{(t^{-i_1}; t)_{i_2}}{(t^{h_2 - i_2 + 1}; t)_{i_2}} + \mathcal{O} (\varepsilon).
		\end{flalign*}
	
		\noindent Inserting this into \eqref{thetaa} yields
		\begin{flalign}
			\label{thetab1}
			\Theta_B (i_1, h_1; i_2, h_2) = (-1)^{i_2} t^{\binom{i_2 + 1}{2} + i_2 h_1} \displaystyle\frac{(t; t)_{h_1}}{(t; t)_{h_2}} \displaystyle\frac{(t^{-i_1}; t)_{i_2}}{(t; t)_{i_2}} + \mathcal{O} (\varepsilon).
		\end{flalign}
		
		\noindent Next, observe that 
		\begin{flalign*}
			& (-1)^{i_2} \displaystyle\frac{(t^{-i_1}; t)_{i_2}}{(t; t)_{i_2}} = t^{\binom{i_1 - i_2 + 1}{2} - \binom{i_1 + 1}{2}} \displaystyle\frac{(t; t)_{i_1}}{(t; t)_{i_2} (t; t)_{i_1 - i_2}} \textbf{1}_{i_1 \ge i_2}; \\
			& \binom{i_2 + 1}{2} + \binom{i_1 - i_2 + 1}{2} - \binom{i_1 + 1}{2} + i_2 h_1 = i_2 (i_2 - i_1 + h_1) = i_2 h_2,
		\end{flalign*}
	
		\noindent where in the last equality we used the fact that $i_1 - h_1 = i_2 - h_2$. This, together with \eqref{thetab1} yields 
		\begin{flalign}
			\label{thetab} 
		\Theta_B (i_1, h_1; i_2, h_2) = t^{i_2 h_2} \displaystyle\frac{(t; t)_{h_1}}{(t; t)_{h_2}} \displaystyle\frac{(t; t)_{i_1}}{(t; t)_{i_2} (t; t)_{i_1 - i_2}} \textbf{1}_{i_1 \ge i_2} + \mathcal{O} (\varepsilon).
		\end{flalign} 
	
		\noindent By inserting $(i_1, h_1; i_2, h_2) \in \big\{ (i, 0; i, 0), (i, 1; i - 1, 0), (i, 1; i, 1) \big\}$ into \eqref{thetab}, we deduce the first statement of \eqref{thetai0b} and the first two statements of \eqref{thetai1b}. 
		
		To deduce the second statement of \eqref{thetai0b}, we insert $(i_1, h_1; i_2, h_2) = (i, 0; i + 1, 1)$ into \eqref{thetaa} to obtain
		\begin{flalign}
			\label{thetabi0i11} 
			\Theta_B (i, 0; i + 1, 1) = \displaystyle\frac{t^{\binom{i + 2}{2}} B^{-i - 1}}{(-t^{1 - i} B^{-1}; t)_{\infty} (t; t)_{i + 1} (1 - t)} \displaystyle\sum_{k = 0}^i t^k \displaystyle\frac{(t^{-i}; t)_k}{(t; t)_k} (t^{-i - 1}; t)_k (-B; t)_k (t^{k - i + 1}; t)_{i - k + 1},
		\end{flalign}
	
		\noindent where we have used the facts that the summand in \eqref{thetabi0i11} corresponding to $k = i + 1$ is equal to $0$ (since $(t^{-i}; t)_{i + 1} = 0$) and that $(t^{h_2 - i_2 + 1}; t)_{i_2} (t^{h_2 - i_2 + 1}; t)_k^{-1} = (t^{h_2 - i_2 + k + 1}; t)_{i_2 - k}$. Since 
		\begin{flalign*}
			& (-t^{1 - i} B^{-1}; t)_{\infty} = 1 + \mathcal{O} (\varepsilon); \qquad B^{- i - 1} (-B; t)_k \textbf{1}_{k \le i} = t^{\binom{i}{2}} B^{-1} \textbf{1}_{k = i} + \mathcal{O} (\varepsilon^2); \\
			& (-1)^i t^{\binom{i + 1}{2}} (t^{-i}; t)_i = (t; t)_i; \qquad (-1)^i t^{\binom{i + 2}{2}} \displaystyle\frac{(t^{-i - 1}; t)_i}{(t; t)_{i + 1}} = \displaystyle\frac{t}{1 - t}
		\end{flalign*}
		
		\noindent it follows that 
		\begin{flalign*} 
			\Theta_B (i, 0; i + 1, 1) = \displaystyle\frac{t}{1 - t} B^{-1} + \mathcal{O} (\varepsilon^2) = \varepsilon \theta \beta_j + \mathcal{O} (\varepsilon^2),
		\end{flalign*}
		
		\noindent which yields the second statement of \eqref{thetai0b}. The proofs of the third statements of \eqref{thetai0b} and \eqref{thetai1b} are very similar to those of the analogous estimates in \Cref{psia0} and are therefore omitted.
	\end{proof}

	Now, as in \Cref{Growthq}, we fix real numbers $\chi, \eta > 0$ and define the integers $X = X_{\varepsilon}  = \lceil \varepsilon^{-1} \chi \rceil$ and $Y = Y_{\varepsilon} = \lceil \varepsilon^{-1} \eta \rceil$ as in \eqref{xy}. Let us use \Cref{theta0} to interpret the behavior in the first $m$ columns of the vertex model from \Cref{ModelArrow} on the rectangle $[1, X] \times [1, Y]$, under empty boundary data with the parameter choices \eqref{msomega} and \eqref{a2y}, in the limit as $\varepsilon$ tends to $0$. 
	
	If $h_1 = 0$ at some vertex in the $k$-th column, then by the first statement of \eqref{thetai0b} we have $h_2 = 0$ with probability about $1 - \varepsilon \theta \beta_k$. In the leftmost column of the vertex model we have $h_1 = 0$ at all sites due to the empty boundary data; so, most vertices will also have $h_2 = 0$. However, with probability about $\varepsilon \theta \beta_1$, we have $(i_2, h_2) = (i_1 + 1, h_1 + 1)$, that is, a pair of a horizontal and vertical arrow nucleates (is created). Since any column of the model has $Y = \mathcal{O} (\varepsilon^{-1})$ vertices, there are $\mathcal{O} (1)$ such nucleation events along the leftmost column. In the limit as $\varepsilon$ tends to $0$, they become (after scaling the vertical coordinate by $\varepsilon$) distributed according to a Poisson point process with intensity $\theta \beta_1$. 
	
	A similar effect occurs in the $k$-th column, for any $k \in [2, m]$; the first statement of \Cref{theta0} again implies that pairs of horizontal and vertical paths nucleate along this column according to a Poisson process with intensity $\theta \beta_k$. However, now there may be some sites with $h_1 = 1$, corresponding to locations where a horizontal arrow enters the column. Letting $i$ denote the number of vertical arrows in the column at such a site, the second statement of \eqref{thetai1b} implies that with probability $t^i$ we have $(i_2, h_2) = (i, 1)$, meaning that this arrow ``passes through'' the column. The first statement of \eqref{thetai1b} implies that with the complementary probability $1 - t^i$ we have $(i_2, h_2) = (i - 1, 0)$, meaning that this arrow is annihilated, along with one vertical arrow in the column.\footnote{An equivalent interpretation is that the entering horizontal arrow attempts to pass through each of the $i$ vertical arrows in the column, one at a time. As in the $t$-PNG model, with probability $t$ this horizontal arrow successfully passes through the vertical arrow, and with probability $1 - t$ they annihilate each other.}
	
	These dynamics proceed in the first $m$ columns of the model. Since $m$ is uniformly bounded in $\varepsilon$, when we scale the horizontal coordinate by $\varepsilon$ and let $\varepsilon$ tend to $0$, these $m$ columns all converge to the $y$-axis, that is, the west boundary of the rectangle $\mathcal{R} = \mathcal{R}_{\chi; \eta} = [0, \chi] \times [0, \eta]$. This boundary therefore acts as an ``external source'' for paths, releasing a horizontal ray into the interior of $\mathcal{R}$ at every site along the $m$-th column at which $h_2 = 1$. In view of the choice \eqref{a2y} and \Cref{modelconverge}, the $t$-deformed PNG model then occurs in the interior of $\mathcal{R}$. This gives rise to the following definition.

	\begin{definition}
		
		\label{boundarypaths} 
		
		Fix an integer $m \ge 1$ and a sequence of positive real numbers $\boldsymbol{\beta} = (\beta_1, \beta_2, \ldots , \beta_m)$. The $t$-PNG model on $\mathcal{R}_{\chi; \eta}$ with intensity $\theta^2$, under \emph{$(\boldsymbol{\beta}; \theta; t)$-boundary conditions}, is the $t$-PNG model as described in \Cref{modelq}, with additional horizontal rays entering at points along the east boundary of $\mathcal{R}_{\chi, \eta}$, given by $\big\{ (0, \kappa_1), (0, \kappa_2), \ldots , (0, \kappa_r) \big\} \subset \{ 0 \} \times [0, \eta]$. Here, the sequence $\boldsymbol{\kappa} = (\kappa_1, \kappa_2, \ldots , \kappa_r) \subset [0, \eta]$ is random and sampled as follows.
		
		\begin{enumerate}
			
			\item Consider $m$ columns $\mathcal{C}_1, \mathcal{C}_2, \ldots , \mathcal{C}_m$, where $\mathcal{C}_j = \{ j - m \} \times [0, \eta] \subset \mathbb{R}^2$. For each $j \in [1, m]$, sample a Poisson point process on $[0, \eta]$, denoted by $\mathcal{Y}_j = (y_1, y_2, \ldots , y_{K(j)})$. Define $\mathcal{V}_j = (v_1, v_2, \ldots , v_{K(j)}) \subset \mathcal{C}_j$ by setting $v_h = (j - m, y_h)$ for each $1 \le h \le K(j)$. 
			
			\item For each point $v \in \mathcal{V}_1 \cup \mathcal{V}_2 \cup \cdots \cup \mathcal{V}_m$, draw two rays emanating from $v$, one directed north and the other directed east. Whenever a horizontal ray intersects a column $\mathcal{C}_j$ containing $i = i(v)$ vertical rays, the following occurs.
			\begin{enumerate}
				\item With probability $1 - t^i$, this horizontal ray and one of the vertical rays in $\mathcal{C}_j$ (that it intersects) are annihilated.
				\item With probability $t^i$, the horizontal ray passes through $\mathcal{C}_j$.
			\end{enumerate}
			
			\item Define $\boldsymbol{\kappa} = (\kappa_1, \kappa_2, \ldots , \kappa_r)$ by setting $(0, \kappa_1), (0, \kappa_2), \ldots , (0, \kappa_r)$ to be the vertices at which a horizontal ray exits through $\mathcal{C}_m$.
		\end{enumerate}

		We refer to \Cref{model2} for a depiction. 
			
		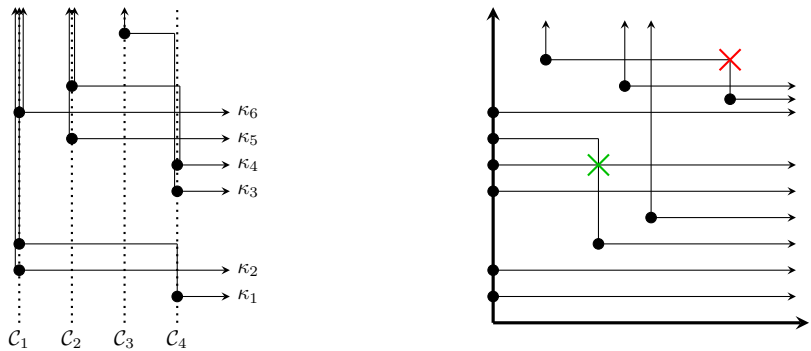
\begin{figure}
			
			\begin{center} 
				
				\begin{tikzpicture}[
					>=stealth,
					scale = .7
					]
					
					\draw[thick, dotted] (-9, 0) node[scale = .8, below]{$\mathcal{C}_1$} -- (-9, 6);
					\draw[thick, dotted] (-8, 0) node[scale = .8, below]{$\mathcal{C}_2$} -- (-8, 6);
					\draw[thick, dotted] (-7, 0) node[scale = .8, below]{$\mathcal{C}_3$} -- (-7, 6);
					\draw[thick, dotted] (-6, 0) node[scale = .8, below]{$\mathcal{C}_4$} -- (-6, 6);
				
					\filldraw[fill = black] (-9, 4) circle [radius = .1];
					\filldraw[fill = black] (-9, 1.5) circle [radius = .1];
					\filldraw[fill = black] (-9, 1) circle [radius = .1];
					
					\filldraw[fill = black] (-8, 3.5) circle [radius = .1];
					\filldraw[fill = black] (-8, 4.5) circle [radius = .1];
					
					\filldraw[fill = black] (-6, .5) circle [radius = .1];
					\filldraw[fill = black] (-6, 2.5) circle [radius = .1];
					\filldraw[fill = black] (-6, 3) circle [radius = .1];
					
					\filldraw[fill = black] (-7, 5.5) circle [radius = .1];
					
					\draw[->] (-9.075, 1) -- (-9.075, 6);
					\draw[->] (-9, 1.5) -- (-9, 6);
					\draw[->] (-8.925, 4) -- (-8.925, 6);
					
					\draw[->] (-9, 1) -- (-5, 1) node[right, scale = .8]{$\kappa_2$};
					\draw[-] (-9, 1.5) -- (-6, 1.5);
					\draw[-] (-6, .5) -- (-6, 1.5);
					
					\draw[->] (-9, 4) -- (-5, 4) node[right, scale = .8]{$\kappa_6$};
					
					\draw[->] (-8.05, 3.5) -- (-8.05, 6);
					\draw[->] (-7.95, 4.5) -- (-7.95, 6);
					
					\draw[->] (-7, 5.5) -- (-7, 6);
					
					\draw[->] (-8, 3.5) -- (-5, 3.5) node[right, scale = .8]{$\kappa_5$};
					
					\draw[->] (-6, .5) -- (-5, .5) node[right, scale = .8]{$\kappa_1$};
					\draw[->] (-6, 2.5) -- (-5, 2.5) node[right, scale = .8]{$\kappa_3$};
					\draw[->] (-6, 3) -- (-5, 3) node[right, scale = .8]{$\kappa_4$};
					
					\draw[-] (-7, 5.5) -- (-6.05, 5.5);
					
					\draw[-] (-8, 4.5) -- (-5.95, 4.5);
					
					\draw[-] (-5.95, 3) -- (-5.95, 4.5);
					\draw[-] (-6.05, 2.5) -- (-6.05, 5.5);

					\draw[->, very thick] (0, 0) -- (0, 6);
					\draw[->, very thick] (0, 0) -- (6, 0);
					
					\draw[->] (0, .5) -- (5.75, .5);
					\draw[->] (1, 5) -- (1, 5.75);
					\draw[-] (1, 5) -- (4.5, 5);
					\draw[-] (4.5, 4.25) -- (4.5, 5);
					\draw[->] (4.5, 4.25) -- (5.75, 4.25);
					\draw[->] (2.5, 4.5) -- (2.5, 5.75);
					\draw[->] (2.5, 4.5) -- (5.75, 4.5);
					\draw[-] (0, 3.5) -- (2, 3.5);
					\draw[-] (2, 1.5) -- (2, 3.5);
					\draw[->] (0, 4) -- (5.75, 4);
					\draw[->] (0, 1) -- (5.75, 1);
					
					\draw[->] (0, 2.5) -- (5.75, 2.5);
					\draw[->] (0, 3) -- (5.75, 3);
					\draw[->] (2, 1.5) -- (5.75, 1.5);
					\draw[->] (3, 2) -- (3, 5.75);
					\draw[->] (3, 2) -- (5.75, 2);
					
					\draw[-, thick, green!75!black] (1.8, 2.8) -- (2.2, 3.2);
					\draw[-, thick, green!75!black] (2.2, 2.8) -- (1.8, 3.2);
					
					\draw[-, thick, red] (4.3, 4.8) -- (4.7, 5.2);
					\draw[-, thick, red] (4.7, 4.8) -- (4.3, 5.2);

					\filldraw[fill = black] (0, .5) circle [radius = .1]; 
					\filldraw[fill = black] (0, 2.5) circle [radius = .1];
					\filldraw[fill = black] (0, 3) circle [radius = .1];
					\filldraw[fill = black] (0, 3.5) circle [radius = .1];
					\filldraw[fill = black] (0, 4) circle [radius = .1]; 
					\filldraw[fill=black, draw=black] (0, 1) circle [radius=.1];
					\filldraw[fill=black, draw=black] (3, 2) circle [radius=.1];
					\filldraw[fill=black, draw=black] (2, 1.5) circle [radius=.1];
					\filldraw[fill=black, draw=black] (1, 5) circle [radius=.1];
					\filldraw[fill=black, draw=black] (2.5, 4.5) circle [radius=.1];
					\filldraw[fill=black, draw=black] (4.5, 4.25) circle [radius=.1];
					
				\end{tikzpicture}
				
			\end{center}
			
			\caption{\label{model2} Shown to the left is the procedure described by \Cref{boundarypaths} used to sample $\boldsymbol{\kappa} = (\kappa_1, \kappa_2, \ldots , \kappa_r)$, where here $r = 6$. Shown to the right is a sample of the $t$-PNG model with the corresponding boundary data.}
		\end{figure}

	\end{definition}
	
	\begin{rem}
		
		Suppose $m = 1$, and denote $\beta = \theta \beta_1$. Then, the $t$-PNG model with $(\boldsymbol{\beta}; \theta; t)$-boundary data is the $t$-PNG model in which horizontal paths additionally enter through the $y$-axis according to a Poisson point process with intensity $\beta$. In the case $t = 0$, this model was studied in \cite{LDPGMES,FPGMES} (see also \cite{KTSP} for the general $m \ge 1$ case of a last passage percolation model slightly different from, but closely related to, the $t = 0$ PNG model). 
		 
	\end{rem}

	The following proposition then states convergence of the vertex model from \Cref{ModelArrow} to the $t$-PNG model with $(\boldsymbol{\beta}; \theta; t)$-boundary data. A heuristic for it was provided above; a careful proof for it is very similar to that of \Cref{modelconverge} and is thus omitted.

	\begin{prop}
		
		\label{modelconverge2} 
		
		Fix real numbers $\chi, \eta > 0$, and define $X, Y \in \mathbb{Z}_{> 0}$ as in \eqref{xy}. Consider the vertex model described in \Cref{ModelArrow} on the rectangle $[1, X] \times [1, Y] \subseteq \mathbb{Z}_{> 0}^2$, under empty boundary data, with parameters choices as in \eqref{msomega} and \eqref{a2y}. When both its coordinates are multiplied by $\varepsilon$, this model converges, as $\varepsilon$ tends to $0$, to the $t$-PNG model with intensity $\theta^2$ on $\mathcal{R}_{\chi; \eta}$, under $(\boldsymbol{\beta}; \theta; t)$-boundary data, from \Cref{boundarypaths}. 
		
	\end{prop}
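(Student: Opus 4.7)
The argument follows closely that of \Cref{modelconverge}, with the additional task of analyzing the dynamics in the leftmost $m$ columns, now governed by the $\Theta$ weights rather than the $\Psi$ weights. The key structural observation is that horizontal arrows propagate only eastward and vertical arrows only northward, so the configuration in the strip $[1, m] \times [1, Y]$ is independent of that in the bulk $[m + 1, X] \times [1, Y]$; this allows the two regions to be analyzed sequentially.

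The first step is to show that, after scaling the vertical coordinate by $\varepsilon$, the dynamics in $[1, m] \times [1, Y]$ converge in distribution to the sampling procedure of \Cref{boundarypaths}. By \Cref{theta0}, in column $j$ each vertex $(j, y)$ with $h_1 = 0$ satisfies $(i_2, h_2) = (i_1, 0)$ with probability $1 - \mathcal{O}(\varepsilon)$, performs a nucleation $(i_2, h_2) = (i_1 + 1, 1)$ with probability $\varepsilon \theta \beta_j + \mathcal{O}(\varepsilon^2)$, and performs any other update with total probability $\mathcal{O}(\varepsilon^2)$. Summing over the $Y = \mathcal{O}(\varepsilon^{-1})$ vertices of the column and applying a standard Poisson limit theorem (e.g., Le Cam's inequality) yields convergence of the rescaled nucleation set in column $j$ to a Poisson point process on $[0, \eta]$ with intensity $\theta \beta_j$, matching step (1) of \Cref{boundarypaths}. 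Likewise, when a horizontal arrow enters a vertex at which $i_1 = i$ and $h_1 = 1$, the second part of \Cref{theta0} assigns annihilation $(i_2, h_2) = (i - 1, 0)$ with probability $1 - t^i + \mathcal{O}(\varepsilon)$ and pass-through $(i_2, h_2) = (i, 1)$ with probability $t^i + \mathcal{O}(\varepsilon)$, matching step (2) of \Cref{boundarypaths}. Because $m$ is fixed independently of $\varepsilon$ and the total number of horizontal-vertical interactions across the strip remains $\mathcal{O}(1)$ in probability as $\varepsilon \to 0$, the cumulative error is $\mathcal{O}(\varepsilon)$ and vanishes in the limit.

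The second step handles the bulk. Conditional on the random set $\boldsymbol{\kappa}$ of heights at which horizontal arrows exit column $m$ into column $m + 1$, the restriction of the vertex model to $[m + 1, X] \times [1, Y]$ is the same $\Psi$-weighted model analyzed in \Cref{modelconverge}, except that its west wall carries the prescribed data $\boldsymbol{\kappa}$ rather than being empty. The proof of \Cref{modelconverge} (to be given in \Cref{ConvergeModel}) is local, in that it analyzes each vertex's transition probability via the asymptotics of \Cref{psia0}, and so applies verbatim after this substitution, producing convergence to the $t$-PNG dynamics on $[\varepsilon(m + 1), \chi] \times [0, \eta]$ with horizontal rays entering from the west boundary at the heights $\boldsymbol{\kappa}$. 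Since $m$ is fixed, the strip $[1, m] \times [1, Y]$ collapses to the segment $\{0\} \times [0, \eta]$ under the $\varepsilon$-rescaling, so the arrows exiting column $m$ become precisely the external source described in \Cref{boundarypaths}.

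The principal technical point is establishing the joint convergence of the boundary data $\boldsymbol{\kappa}$ and the bulk configuration rather than merely their marginals. This is handled by the sequential, conditional structure noted above: one samples the boundary strip first (yielding $\boldsymbol{\kappa}$), then runs the bulk dynamics conditionally on $\boldsymbol{\kappa}$. Combined with the continuous mapping theorem (applied to the map from $\boldsymbol{\kappa}$ together with the bulk Poisson nucleations and the collision outcomes to the final path ensemble), this reduces the claim to the two pieces established above.
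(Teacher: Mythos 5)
The paper omits the proof of this proposition, remarking only that it is ``very similar to that of \Cref{modelconverge},'' and your argument is indeed the natural fill-in: analyze the leftmost $m$ columns via the $\Theta$-weight asymptotics of \Cref{theta0} and a Poisson limit theorem, then run the bulk dynamics conditionally on the exiting arrows via the coupling of \Cref{ConvergeModel}. The overall structure — exploit the one-way dependence of the bulk on the strip (not mutual independence, which is how ``the strip is independent of the bulk'' could be misread; the bulk certainly depends on the strip) and combine with the continuous mapping theorem — is sound and essentially what the authors have in mind.

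One assertion is too strong and would need to be repaired in a careful write-up: the claim that the proof of \Cref{modelconverge} ``applies verbatim after this substitution.'' The coupling in \Cref{ConvergeModel} uses the empty boundary globally, not just locally: the base case of \Cref{bv} is $V_0 = 0$ and $\mathbb{P}[\mathscr{B}_0] = 1$, and the cumulative bounds $\mathbb{E}[\mathbf{1}_{\mathscr{B}_D} V_D] \le C D^2 \varepsilon^2$ and $1 - \mathbb{P}[\mathscr{B}_D] \le 2C\varepsilon^4 D^3$ inherit from that. With $\mathcal{O}(1)$ arrows entering the bulk from the west, the base case changes, so one must rerun the induction with an additive $\mathcal{O}(1)$ term in $V_D$ and check that $\sum_{D \le X+Y} V_D$ remains $o(\varepsilon^{-3/2})$ and that $\mathscr{B}$ still holds with probability $1 - o(1)$. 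This does work out — the extra term contributes $\mathcal{O}(\varepsilon^{-1}) = o(\varepsilon^{-3/2})$ to the Markov bound in \eqref{a}, and the boundary arrows enter one per row so they do not create $\max\{i_1, h_1\} \ge 2$ by themselves — but it is a genuine re-examination rather than a verbatim transfer, and it also requires the tightness (convergence in probability, not just in distribution) of the number of exiting arrows $r = |\boldsymbol{\kappa}|$, which your first paragraph supplies. With that adjustment noted, the proposal is correct.
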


	\section{Asymptotics}

	\label{Limit}
	
	In this section we describe asymptotic results for the $t$-PNG model. We begin in \Cref{PolynomialsFuse} by explaining a matching result between an observable of the $t$-PNG model with that of a Schur and of a Hall--Littlewood measure. We then analyze the large scale asymptotics for the $t$-PNG model in \Cref{LimitModel} and a limit to the KPZ equation in \Cref{LimitEquation}.

	\subsection{Matching With Schur and Hall--Littlewood Measures}
	
	\label{PolynomialsFuse}
	
	In this section we describe matching results between the $t$-PNG model and both the Schur and Hall--Littlewood measures, given by the following theorem. In the below, the height function $\mathfrak{H} (x, y)$ for the $t$-PNG model is defined to be the number of horizontal rays in the model that intersect the vertical interval $\{ x \} \times [0, y] \subset \mathbb{R}^2$.
	
	\begin{thm}
	
		\label{modelequation} 
		
		Fix an integer $m \ge 0$; real numbers $t \ge 0$ and $\chi, \eta, \theta > 0$; and a sequence of real numbers $\boldsymbol{\beta} = (\beta_1, \beta_2, \ldots , \beta_m) \subset \mathbb{R}_{> 0}$. Let $\mathfrak{H} (x, y)$ denote the height function for the $t$-PNG model on $\mathbb{R}_{> 0}^2$ with intensity $\theta^2$, under $(\boldsymbol{\beta}; \theta; t)$-boundary data, from \Cref{boundarypaths} (or \Cref{modelq}, if $m = 0$). Further let $\widetilde{\boldsymbol{\beta}} = \bigcup_{j = 1}^m \{ \beta_j, t \beta_j, \ldots  \}$. Define the specializations 
		\begin{flalign*} 
			\rho_1 = ( \boldsymbol{0} \boldsymbol{\mid} \boldsymbol{0} \boldsymbol{\mid} \eta \theta); \quad \rho_1' = \big(\boldsymbol{0} \boldsymbol{\mid} \boldsymbol{0} \boldsymbol{\mid} (1 - t) \eta \theta \big); \quad \rho_2 = \big( \boldsymbol{0} \boldsymbol{\mid} \widetilde{\boldsymbol{\beta}} \boldsymbol{\mid} (1 - t)^{-1} \chi \theta \big); \quad \rho_2' = (\boldsymbol{0} \boldsymbol{\mid} \boldsymbol{\beta} \boldsymbol{\mid} \chi \theta).
		\end{flalign*} 
	
		\noindent Then, the following two statements hold.
		
		\begin{enumerate}
			\item We have that 
			\begin{flalign}
				\label{1n}
				\mathbb{E} \Bigg[ \displaystyle\frac{1}{(-\zeta t^{-\mathfrak{H} (\chi, \eta)}; t)_{\infty}} \Bigg] = \mathbb{E}_{\SM} \Bigg[ \displaystyle\frac{1}{(-t^{-\ell (\lambda)} \zeta; t)_{\infty}} \displaystyle\prod_{j = 1}^{\ell (\lambda)} (1 + \zeta t^{\lambda_j - j}) \Bigg],
			\end{flalign}
		
			\noindent where the expectation on the left side is with respect to the $t$-PNG model, and the right side is with respect to the Schur measure with specializations $\rho_1$ and $\rho_2$.
			
			\item Let $\lambda$ denote a random partition sampled under the Hall--Littlewood measure with specializations $\rho_1'$ and $\rho_2'$. Then, $\mathfrak{H} (\chi, \eta)$ has the same law as $\ell (\lambda)$.
		\end{enumerate}
			
	\end{thm}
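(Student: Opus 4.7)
The plan is to derive both parts from the vertex model--Macdonald matching, applied to the prelimit fused stochastic higher spin vertex model of \Cref{BoundaryTheta} (with parameters \eqref{msomega} and \eqref{a2y}) and then carried through the triple limit $s \to 0$, $J \to \infty$, $\varepsilon \to 0$. By \Cref{modelconverge2}, the horizontally complemented vertex model converges to the $t$-PNG with $(\boldsymbol{\beta}; \theta; t)$-boundary data, and after complementation the $t$-PNG height function $\mathfrak{H}(\chi, \eta)$ corresponds to $K - \mathfrak{h}(M, N)$ in the prelimit (where $K = JY$ is the total number of entering paths, $M = X$, $N = Y$). For part (1) I will invoke \Cref{pvm} at $q = t$ (Schur), and for part (2) I will invoke \Cref{pvm1} directly (Hall--Littlewood, $q = 0$).

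The first technical step is to verify \Cref{parametersmodel} and then track the limits of the Macdonald specializations $\rho_1^{(\varepsilon)} = (\boldsymbol{x} \boldsymbol{\mid} \boldsymbol{0})$ and $\rho_2^{(\varepsilon)} = (\boldsymbol{\alpha} \boldsymbol{\mid} \boldsymbol{\beta})$. Rows produce $\boldsymbol{x}_y = \{t^k A_y^{-1}\}_{k = 1}^J$; as $J \to \infty$ this becomes an infinite geometric progression, and with $A_y \propto \varepsilon^{-1}$ across $Y \sim \eta/\varepsilon$ rows, \Cref{convergealphagamma} (or a direct power sum computation) shows that the specialization collapses to a pure $\gamma$-specialization, with $p_1 \to \eta \theta$ and $p_k \to 0$ for $k \ge 2$; this yields $\rho_1$ for Schur and $\rho_1'$ for Hall--Littlewood. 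Columns $x > m$ contribute $\boldsymbol{\alpha}$-pieces $\mathcal{G}(\varepsilon\theta; h)$ with $h \to \infty$ as $s \to 0$; summing over the $X - m \sim \chi/\varepsilon$ such columns collapses this part to a $\gamma$-contribution $(1-t)^{-1}\chi\theta$ (Schur) or $\chi\theta$ (HL) in the limit. Columns $x \le m$ contribute to the $\boldsymbol{\beta}$-side; for Schur the matching $s = -q^{h/2}$ with $q = t > 0$ forces $h \to \infty$, producing $\widetilde{\boldsymbol{\beta}}$, whereas for Hall--Littlewood the constraint $s = -q^{h/2} = 0$ is degenerate in $h$, so one can take $h = 1$ and obtain the finite sequence $\boldsymbol{\beta}$. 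Matching power sums confirms the convergence of specializations to $(\rho_1, \rho_2)$ and $(\rho_1', \rho_2')$ as stated.

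For part (1), in the identity \eqref{fvexpectation} I substitute $\zeta \mapsto \zeta t^{-K}$. The left side becomes $\mathbb{E}[(-\zeta t^{\mathfrak{h}(M,N) - K}; t)_{\infty}^{-1}]$, which in the limit is exactly the left side of \eqref{1n}. On the right, I split the product $\prod_{j = 0}^{K - 1}(1 + \zeta t^{\lambda_{K - j} + j - K})$ at $j = K - \ell(\lambda)$ into a ``tail'' (where $\lambda_{K-j} = 0$) and a ``body'' (indexed by $1 \le i \le \ell(\lambda)$ via $i = K - j$). Using the elementary identity
\begin{equation*}
(-\zeta t^{-K}; t)_{\infty} = (-\zeta; t)_{\infty} \prod_{i = 1}^{K}(1 + \zeta t^{-i}),
\end{equation*}
the tail product cancels against part of $(-\zeta t^{-K}; t)_{\infty}^{-1}$, leaving precisely $(-\zeta t^{-\ell(\lambda)}; t)_{\infty}^{-1} \prod_{i=1}^{\ell(\lambda)}(1 + \zeta t^{\lambda_i - i})$, which is the right side of \eqref{1n}. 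For part (2), \Cref{pvm1} gives $K - \mathfrak{h}(M,N) \stackrel{d}{=} \ell(\lambda)$ at the prelimit, and the limits identify this with $\mathfrak{H}(\chi, \eta) \stackrel{d}{=} \ell(\lambda)$ under Hall--Littlewood with $(\rho_1', \rho_2')$.

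The main obstacle is justifying that the matching identity, and its $K \to \infty$ rearrangement, pass cleanly to all three limits. This is handled by appealing to \Cref{pfvrational} together with the fact that both sides of \eqref{fvexpectation} are rational in the underlying parameters, so the identity persists by analytic continuation wherever both sides remain well-defined; dominated convergence then justifies the interchange of expectation and limit for the bounded test functional $(-\zeta t^{\cdot}; t)_\infty^{-1}$ (with $\zeta$ chosen to keep it bounded), and the resulting equality of moment generating functions for a $[0,1]$-valued random variable implies the distributional convergence invoked for part (2).
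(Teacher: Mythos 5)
Your proposal is correct and follows essentially the same route as the paper's own argument: apply \Cref{pvm} to the prelimit fused model, substitute $\zeta \mapsto \zeta t^{-K}$ so that the complemented height function $K - \mathfrak h(M,N)$ appears, rearrange the right side via the Pochhammer identity $(-\zeta t^{-K};t)_\infty = (-\zeta;t)_\infty \prod_{i=1}^K (1+\zeta t^{-i})$, push the specializations through the $(s,J,\varepsilon)$ limits via \Cref{convergealphagamma}, invoke \Cref{modelconverge2} on the vertex-model side, then set $q=t$ for part (1) and appeal to \Cref{pvm1} for part (2). Two small remarks: (i) the paper's text contains several minor coordinate/typographical slips around this step (the substitution is printed as $q^{-XJ}\zeta$, and $K$ and the complementation formula are written with $X$ and $Jx$ rather than $Y$ and $Jy$); your version with $K = JY$ and $\zeta \mapsto \zeta t^{-K}$ is the correct reading. (ii) The paper handles the $\boldsymbol\beta$-part by keeping $q$ generic throughout the limit, producing a single specialization $(\boldsymbol 0\mid\boldsymbol\varphi\mid\tfrac{\chi\theta}{1-q})$ with $\boldsymbol\varphi = \bigcup_j\{\beta_j,q\beta_j,\dots\}$ and only specializing $q=t$ or $q=0$ at the very end; your separate treatment (forcing $h\to\infty$ when $q=t$ versus taking $h=1$ when $q=0$) reaches the same specializations but is slightly less tidy, since the exact matching $s_k=-q^{h/2}$ at $q=0$ is best justified by the paper's analytic-continuation-in-$q$ route rather than by picking a particular $h$.
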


	\begin{proof}
		
		In what follows, we let $q \in [0, 1)$ be a real number. By \Cref{modelconverge2}, the $t$-PNG model is the limit as $\varepsilon$ tends to $0$ of the vertex model described in \Cref{ModelArrow}, with parameters given by \eqref{msomega} and \eqref{a2y}. The latter is the horizontal complementation of the fused stochastic higher spin vertex model with parameters $(t, \boldsymbol{u}, \boldsymbol{\xi}, \boldsymbol{r}, \boldsymbol{s})$ given by setting 
		\begin{flalign*}
			(u_y, r_y) = \bigg( \displaystyle\frac{t^{1-J}}{1 - t} (\varepsilon \theta)^{-1}, t^{-J/2} \bigg), & \qquad \text{for any integer $y \ge 1$}; \\
			(\xi_x, s_x) = \big( -(s' \beta_j)^{-1}, s' \big), \qquad \quad & \qquad \text{for any integer $x \in [1, m]$}; \\
			(\xi_x, s_x) = \big( s (\varepsilon \theta)^{-1}, s \big), \qquad \qquad & \qquad \text{for any integer $x \ge m$},
		\end{flalign*}
	
		\noindent first letting $(s, s')$ tend to $(\infty, 0)$, and then letting $J$ tend to $\infty$. Let $J \ge 1$ remain an arbitrary integer for the moment, and denote the height function for the associated vertex model by $\mathfrak{h}_{\FV(J)} (x, y)$. The height function for its horizontal complementation is then $h_{\CV (J)} (x, y) = Jx - \mathfrak{h}_{\FV(J)} (x, y)$.
		
		Next, set $X = X_{\varepsilon} = \lceil \varepsilon^{-1} \chi \rceil$ and $Y = Y_{\varepsilon} = \lceil \varepsilon^{-1} \eta \rceil$. Define the parameter (multi-)sets 
		\begin{flalign*}
			& \boldsymbol{x} = \boldsymbol{x}_{\varepsilon} = \bigcup_{i = 1}^Y \big\{ (1 - t) \varepsilon \theta, t (1 - t) \varepsilon \theta, t^2 (1 - t) \varepsilon \theta , \ldots , t^{J - 1} (1 - t) \varepsilon \theta \big\}; \\
			& \boldsymbol{\psi} = \boldsymbol{\psi}_{\varepsilon} = \bigcup_{i = 1}^X \{ \varepsilon \theta, t \varepsilon \theta, t^2 \varepsilon \theta, \ldots \}; \qquad \boldsymbol{\varphi} = \bigcup_{j = 1}^m \{ \beta_j, q \beta_j, q^2 \beta_j, \ldots \},
		\end{flalign*}
	
		\noindent where the geometric progressions with ratio $t$ do not depend on $i$ (meaning that $\boldsymbol{x}$ and $\boldsymbol{\psi}$ contain $Y$ and $X$ copies of them, respectively). Then, the above parameter sets $(t, \boldsymbol{u}, \boldsymbol{\xi}, \boldsymbol{r}, \boldsymbol{s})$ for $\FV (J)$ and $(\boldsymbol{x}, \boldsymbol{\alpha}, \boldsymbol{\psi})$ match in the sense of \Cref{parametersmodel}, by taking $(N, M)$ there to be $(X, Y)$ here; $\boldsymbol{x}$ there to be $\boldsymbol{x}$ here; $\boldsymbol{u}$ there to constitute $Y$ copies of $t^{1 - J} (1 - t)^{-1} (\varepsilon \theta)^{-1}$ here; $\boldsymbol{r}$ to constitute $J$ copies of $t^{-J/2}$; each $\widehat{\alpha}_i$ there to be $\varepsilon \theta$ here; each $\widehat{\beta}_j$ there to be $\beta_j$ here; and each $h_i$ there to be $\infty$ here.
		
		We may therefore apply \Cref{pvm}, with $(K, \zeta)$ there equal to $(XJ, q^{-XJ} \zeta)$ here to deduce 
		\begin{flalign}
			\label{cjvexpectation}
			\mathbb{E}_{\CV(J)} \Bigg[ \displaystyle\frac{1}{(-\zeta t^{-\mathfrak{h}_{\CV (J)} (X, Y)}; t)_{\infty}} \Bigg] = \mathbb{E}_{\MM} \Bigg[ \displaystyle\frac{1}{(-t^{- \ell (\lambda)} \zeta; t)_{\infty}} \displaystyle\prod_{j = 1}^{\ell (\lambda)} (1 + \zeta q^{\lambda_j} t^{-j}) \Bigg],
		\end{flalign}
		
		\noindent where the expectation on the right side is with respect to the Macdonald measure with specializations $(\boldsymbol{x}_{\varepsilon} \boldsymbol{\mid} \boldsymbol{0})$ and $(\boldsymbol{\psi}_{\varepsilon} \boldsymbol{\mid} \boldsymbol{\varphi})$, and we used the fact that $1 + \zeta q^{\lambda_j} t^j = 1 + \zeta t^j$ for $j > \ell (\lambda)$.
		
		Now let us take the limit of both sides of \eqref{cjvexpectation} as first $J$ tends to $\infty$, and then $\varepsilon$ tends to $0$. By \Cref{modelconverge2}, the left side converges to $\mathbb{E} \big[ (-\zeta t^{-\mathfrak{H} (\chi, \eta)}; t)_{\infty}^{-1} \big]$, where the expectation is with respect to the $t$-PNG model. To analyze the right side, observe by \Cref{convergealphagamma} that under this limit $(\boldsymbol{x}_{\varepsilon} \boldsymbol{\mid} \boldsymbol{0})$ and $(\boldsymbol{\psi}_{\varepsilon} \boldsymbol{\mid} \boldsymbol{\beta})$ converge to $\rho_1 = \big(\boldsymbol{0} \boldsymbol{\mid} \boldsymbol{0} \boldsymbol{\mid} \frac{1 - t}{1 - q} (1 - t^J) \eta \theta \big)$ and $\rho_3 = \big(\boldsymbol{0} \boldsymbol{\mid} \boldsymbol{\varphi} \boldsymbol{\mid} \frac{\chi \theta}{1 - q} \big)$, respectively. Thus, taking the limit first as $J$ tends to $\infty$ and next as $\varepsilon$ tends to $0$ in \eqref{cjvexpectation} gives
		\begin{flalign}
			\label{11n}
			\mathbb{E} \Bigg[ \displaystyle\frac{1}{(-\zeta t^{-{\mathfrak{H} (\chi, \eta)}; t)_{\infty}}} \Bigg] = \mathbb{E}_{\MM} \Bigg[ \displaystyle\frac{1}{(-t^{- \ell (\lambda)} \zeta; t)_{\infty}} \displaystyle\prod_{j = 1}^{\ell (\lambda)} (1 + \zeta q^{\lambda_j} t^{-j}) \Bigg],
		\end{flalign}
	
		\noindent where the expectation on the left side is with respect to the $t$-PNG model, and the expectation on the right side is with respect to the Macdonald measure with specializations $\rho_1$ and $\rho_3$. Applying \eqref{11n} with $q = t$ then yields \eqref{1n} (since at $q = t$ we have $\rho_3 = \rho_2$), by \Cref{measure0qt}; this establishes the first statement of the theorem. 
		
		To establish the second, we apply \Cref{pvm1}. This implies that $\mathfrak{h}_{\CV (J)} (X, Y)$ has the same law as $\ell (\lambda)$, where $\lambda$ is distributed according to a Hall--Littlewood measure with specializations $(\boldsymbol{x}_{\varepsilon} \boldsymbol{\mid} \boldsymbol{0})$ and $(\boldsymbol{\psi}_{\varepsilon} \boldsymbol{\mid} \boldsymbol{\beta})$ (where we used the fact that at $q = 0$ we have $\boldsymbol{\varphi} = \boldsymbol{\beta}$). Again taking the limits as first $J$ tends to $\infty$ and then as $\varepsilon$ tends to $0$, and applying \Cref{modelconverge2} and \Cref{convergealphagamma}, we deduce the second statement of the theorem.
	\end{proof}

	\subsection{Large Scale Asymptotics}

	\label{LimitModel}
	
	In this section we analyze the large scale asymptotics for the height function $\mathfrak{H} (\chi, \eta)$ of the $t$-PNG model, as $\chi$ and $\eta$ tend to $\infty$. To that end, we use \Cref{modelequation} to compare $\mathfrak{H} (\chi, \eta)$ with a Schur measure (which will correspond to the standard PNG model at $t = 0$). 
	
	To implement this, we recall from Definition 5.2 of \cite{SHSVMM} that two sequences of real-valued random variables $\{\mathfrak{a}_n \}$ and $\{ \mathfrak{b}_n \}$ are called \emph{asymptotically independent} if the following two conditions hold.
	
	\begin{enumerate}
		\item We have 
		\begin{flalign} 
			\label{anbn} 
			\displaystyle\lim_{n \rightarrow \infty} \displaystyle\sup_{z \in \mathbb{R}} \mathbb{P} [z < \mathfrak{a}_n \le z + 1] = 0, \quad \text{if and only if} \quad \displaystyle\lim_{n \rightarrow \infty} \displaystyle\sup_{z \in \mathbb{R}} \mathbb{P} [z < \mathfrak{b}_n \le z + 1] = 0.
		\end{flalign} 
		\item If both limits in \eqref{anbn} hold, then $\lim_{n \rightarrow \infty} \sup_{z \in \mathbb{R}} \big( \mathbb{P} [\mathfrak{a}_n \le z] - \mathbb{P} [\mathfrak{b}_n \le z] \big) = 0$. 
	\end{enumerate}
	
	The below lemma, which quickly follows from \Cref{modelequation} with results from \cite{SHSVMM}, states an asymptotic equivalence between the height function $\mathfrak{H} (\chi, \eta)$ and the length of a partition sampled from a Schur measure. 
	
	\begin{lem}
		
		\label{lengthheight}
		
		Recall the notation of \Cref{modelequation}; let $N \ge 1$ be an integer; assume that $(\chi, \eta) = (\chi_N, \eta_N) = (xN, yN)$; and sample a partition $\lambda \in \mathbb{Y}$ under the Schur measure with specializations $\rho_1 = (\boldsymbol{0} \boldsymbol{\mid} \boldsymbol{0} \boldsymbol{\mid} y \theta N)$ and $\rho_2 = \big( \boldsymbol{0} \boldsymbol{\mid} \boldsymbol{\beta} \boldsymbol{\mid} (1 - t)^{-1} x \theta N \big)$. Then $\mathfrak{H} (\chi, \eta) = \mathfrak{H} (xN, yN)$ is asymptotically equivalent to $\ell (\lambda)$ (where $N \ge 1$ is the index variable for both sequences).
	\end{lem}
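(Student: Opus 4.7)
The plan is to reduce the claim, via the first part of \Cref{modelequation}, to an asymptotic comparison of two Laplace-type observables and then to invoke a general criterion from \cite{SHSVMM} relating such comparisons to asymptotic equivalence. By \Cref{modelequation}(1), we have the exact identity
\begin{flalign*}
\mathbb{E} \Bigg[ \frac{1}{(-\zeta t^{-\mathfrak{H}(\chi,\eta)}; t)_{\infty}} \Bigg] = \mathbb{E}_{\SM} \Bigg[ \frac{1}{(-\zeta t^{-\ell(\lambda)}; t)_{\infty}} \prod_{j=1}^{\ell(\lambda)} (1 + \zeta t^{\lambda_j - j}) \Bigg],
\end{flalign*}
valid for the Schur measure with the specializations $\rho_1, \rho_2$ given in the lemma. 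The function $x \mapsto (-\zeta t^{-x}; t)_{\infty}^{-1}$ is a smooth, strictly monotone approximation to an indicator, and varying $\zeta$ over positive reals allows one to recover the distribution function; hence the content of the lemma is that, asymptotically, the extra product $\mathcal{P}(\lambda;\zeta) := \prod_{j=1}^{\ell(\lambda)} (1 + \zeta t^{\lambda_j - j})$ on the right may be replaced by $1$.

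First I would verify that $\mathcal{P}(\lambda;\zeta)$ concentrates to $1$ under the Schur measure in question. Since $t \in [0,1)$, each factor lies in $[1, 1+\zeta]$ whenever $\lambda_j \ge j$, and more importantly $(1 + \zeta t^{\lambda_j - j}) = 1 + O(t^{\lambda_j-j})$, so the bound $\log \mathcal{P}(\lambda;\zeta) \le \zeta \sum_{j\le \ell(\lambda)} t^{\lambda_j - j}$ reduces matters to showing that only $o(1)$ expected contribution comes from indices $j$ near the ``edge'' where $\lambda_j - j$ is not large. The specializations $\rho_1, \rho_2$ scale linearly in $N$, so the Young diagram of $\lambda$ has a (Plancherel-like) limit shape of linear size $N$; away from the $O(N^{1/3})$-sized edge region, the quantity $\lambda_j - j$ grows linearly, contributing a geometrically summable tail, while the edge contributes $O(N^{1/3}) \cdot t^{O(1)}$ factors each only slightly exceeding $1$. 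This edge estimate can be made rigorous either through the determinantal structure of Schur measures (using the discrete Bessel / biorthogonal ensemble kernels and their known tail bounds) or by bounding moments of the counting function $\#\{ j : \lambda_j - j \le K\}$ uniformly in $N$; the presence of the finite $\boldsymbol{\beta}$-portion of $\rho_2$ is a bounded rank-$m$ perturbation that does not affect the bulk shape.

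Once the concentration $\mathcal{P}(\lambda;\zeta) = 1 + o_{\mathbb{P}}(1)$ (together with a uniform upper bound to justify dominated convergence) is established, the right-hand side of the identity becomes $\mathbb{E}_{\SM}\bigl[(-\zeta t^{-\ell(\lambda)}; t)_{\infty}^{-1}\bigr] + o(1)$. At this point I would invoke the general asymptotic equivalence framework set up in Section 5 of \cite{SHSVMM}: for the family of test functions $f_\zeta(x) = (-\zeta t^{-x}; t)_{\infty}^{-1}$, matching of expectations in the limit for sufficiently many $\zeta > 0$ implies matching of distribution functions in the sense of \eqref{anbn}, and hence asymptotic equivalence of $\mathfrak{H}(\chi,\eta)$ and $\ell(\lambda)$.

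The main obstacle is the concentration of $\mathcal{P}(\lambda;\zeta)$ in the presence of the $\boldsymbol{\beta}$-portion of $\rho_2$: in the pure Plancherel case ($m=0$) the edge estimate is classical, but with external-source parameters one must verify that the $\boldsymbol{\beta}$-perturbation does not produce macroscopically many small $\lambda_j - j$. This can be handled by exploiting that the Schur measure with specializations of the form $(\boldsymbol{0} \boldsymbol{\mid} \boldsymbol{0} \boldsymbol{\mid} \gamma) \otimes (\boldsymbol{0} \boldsymbol{\mid} \boldsymbol{\beta} \boldsymbol{\mid} \gamma')$ is still determinantal with an explicit kernel, so the expected number of ``bad'' indices can be bounded uniformly in $N$, closing the argument.
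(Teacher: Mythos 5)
The core step of your proposal is false. You claim that $\mathcal{P}(\lambda;\zeta) = \prod_{j=1}^{\ell(\lambda)}(1 + \zeta t^{\lambda_j - j})$ concentrates to $1$, but in fact this product diverges under the Schur measure in question. For indices $j$ close to $\ell(\lambda)$, one has $\lambda_j$ of order $1$, so $\lambda_j - j$ is close to $1 - \ell(\lambda)$, which is of order $-N$ (recall $\ell(\lambda)$ grows linearly in $N$). Since $t \in [0,1)$, the corresponding factor is $1 + \zeta t^{\lambda_j - j} \approx \zeta t^{1-\ell(\lambda)}$, which is astronomically large. Your bound $\log \mathcal{P}(\lambda;\zeta) \le \zeta \sum_{j\le \ell(\lambda)} t^{\lambda_j-j}$ is a true inequality, but the right side is enormous, not $o(1)$ — you appear to have considered only the $j = O(1)$ end of the diagram, where $\lambda_j - j$ is large and positive and the factors are indeed close to $1$, and overlooked the opposite end $j$ near $\ell(\lambda)$ where the exponents are large and negative. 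In the identity underlying \Cref{modelequation}, the divergence of $\mathcal{P}(\lambda;\zeta)$ is exactly compensated by the prefactor $(-\zeta t^{-\ell(\lambda)};t)_\infty^{-1}$: their product is $\prod_{j\ge 1}(1+\zeta t^{\,j-\lambda'_j-1})^{-1} \in [0,1]$, which is well-behaved, but neither factor is bounded by itself. So there is no regime in which you may simply drop $\mathcal{P}$.

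The paper's proof sidesteps this entirely. It sets $F_N(z) = \mathbb{E}\big[(-t^{z-\mathfrak{H}};t)_\infty^{-1}\big]$ and $G_N(z) = \mathbb{E}_{\SM}\big[(-t^{z-\ell(\lambda)};t)_\infty^{-1}\prod_j(1+t^{\lambda_j-j+z})\big]$ and notes $F_N \equiv G_N$ by \Cref{modelequation}. It then invokes Corollary 5.7 of \cite{SHSVMM} \emph{as a black box}, which says precisely that $\mathfrak{H}$ is asymptotically equivalent to $F_N$ \emph{and} that $\ell(\lambda)$ is asymptotically equivalent to the full observable $G_N$, product factor included. That corollary is tailored to exactly the pair of observables appearing in the vertex-model/Macdonald matching and already packages the delicate analysis you were attempting to reprove by hand; the paper's argument then closes by transitivity. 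To repair your proposal you would need to either prove a statement of the strength of Corollary 5.7 of \cite{SHSVMM} for the modified observable $G_N$, or cite it directly — the concentration route cannot be salvaged.
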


	\begin{proof}
				
		Throughout this proof, we say that a sequence of random variables $\{ \mathfrak{a}_n \}$ is asymptotically equivalent to a sequence of cumulative distribution functions $\{ F_n \}$ if the following holds. Letting $\mathfrak{b}_n$ denote the random variable such that $\mathbb{P} [\mathfrak{b}_n \le z] = F_n (z)$, the random variable sequences $\{ \mathfrak{a}_n \}$ and $\{ \mathfrak{b}_n \}$ are asymptotically equivalent.
		
		Now, for any real number $z \in \mathbb{R}$, set 
		\begin{flalign}
		F_N (x) = \mathbb{E} \Bigg[\displaystyle\frac{1}{(-t^{z - \mathfrak{H} (\chi_N, \eta_N)}; t)_{\infty}} \Bigg]; \qquad G_N (z) = \mathbb{E} \Bigg[ \displaystyle\frac{1}{(-t^{z - \ell (\lambda)}; t)_{\infty}} \displaystyle\prod_{j = 1}^{\ell (\lambda)} (1 + t^{\lambda_j - j + z}) \Bigg]. 
		\end{flalign}
	
		\noindent By the $\zeta = t^z$ case of \eqref{1n}, we have $F_N (z) = G_N (z)$ for each $x \in \mathbb{R}$. It is quickly verified that $F_N$ and $G_N$ are nondecreasing in $z$, and further satisfy $\lim_{z \rightarrow -\infty} F_N (z) = 0 = \lim_{z \rightarrow -\infty} G_N (z)$ and $\lim_{z \rightarrow \infty} F_N (z) = 1 = \lim_{z \rightarrow \infty} G_N (z)$. 
		
		Corollary 5.7 of \cite{SHSVMM} implies that the random variable $\mathfrak{H} (xN, yN)$ is asymptotically equivalent to the function $F_N$, and that $\ell (\lambda)$ is asymptotically equivalent to $G_N$. The lemma then follows from the fact that $F_N = G_N$.	
	\end{proof}

	When $m = 0$, the sequence $\boldsymbol{\beta}$ is empty, and the Schur measure considered in \Cref{lengthheight} reduces to the \emph{Poissonized Plancherel measure}. The latter was analyzed in detail in \cite{DLLSRP,AMSG,DOPM} relating to the longest increasing subsequence of a random permutation and to the $t = 0$ PNG model. Its asymptotics are therefore well understood, which gives rise to the following result for the asymptotic behavior of the $t$-PNG model without boundary conditions.

	\begin{thm} 
		
		\label{hxnynlimit}
	
	Fix positive real numbers $x, y, \theta \in \mathbb{R}_{> 0}$ and $t \in [0, 1)$. Let $\mathfrak{H}$ denote the height function for the $t$-PNG model with intensity $\theta^2$ on $\mathbb{R}_{> 0}^2$ (without boundary conditions) from \Cref{modelq}. Then, 
	\begin{flalign*}
		\displaystyle\lim_{N \rightarrow \infty} \mathbb{P} \bigg[ \displaystyle\frac{\mathfrak{H} (xN, yN) - \mu N}{\sigma N^{1/3}} \le s \bigg] = F_{\TW} (s),
	\end{flalign*} 

	\noindent where $F_{\TW}$ denotes the Tracy--Widom Gaussian Unitary Ensemble (GUE) distribution, and $\mu = \mu (t, \theta, x, y)$ and $\sigma = \sigma(t, x, y, \theta)$ are defined by
	\begin{flalign*} 
		\mu = 2 \theta (xy)^{1/2}(1 - t)^{-1/2}; \qquad \sigma = \theta^{1/3} (xy)^{1/6} (1 - t)^{-1/6}.
	\end{flalign*}

	\end{thm}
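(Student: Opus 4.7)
The plan is to reduce \Cref{hxnynlimit} to the classical Tracy--Widom asymptotics for the Poissonized Plancherel measure, using the asymptotic equivalence furnished by \Cref{lengthheight}.

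First I would invoke \Cref{lengthheight} in the boundary-free case $m = 0$, so that $\boldsymbol{\beta}$ is empty. This identifies $\mathfrak{H}(xN, yN)$ as asymptotically equivalent to $\ell(\lambda)$, where $\lambda$ is distributed under the Schur measure with specializations $\rho_1 = (\boldsymbol{0} \boldsymbol{\mid} \boldsymbol{0} \boldsymbol{\mid} y\theta N)$ and $\rho_2 = \bigl(\boldsymbol{0} \boldsymbol{\mid} \boldsymbol{0} \boldsymbol{\mid} (1 - t)^{-1} x\theta N\bigr)$. Both $\rho_1$ and $\rho_2$ are pure Plancherel specializations, so the Schur measure reduces to the Poissonized Plancherel measure on $\mathbb{Y}$ with parameter
\begin{flalign*}
\gamma^2 = y\theta N \cdot (1 - t)^{-1} x \theta N = \frac{xy\theta^2 N^2}{1 - t},
\end{flalign*}
that is, $\gamma = \theta N (xy)^{1/2}(1 - t)^{-1/2}$.

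Second, I would appeal to the classical results of Baik--Deift--Johansson \cite{DLLSRP}, Borodin--Okounkov--Olshanski \cite{AMSG}, and Johansson \cite{DOPM} on the fluctuations of the length of a Poissonized Plancherel partition, namely
\begin{flalign*}
\lim_{\gamma \to \infty} \mathbb{P} \bigg[ \frac{\ell(\lambda) - 2\gamma}{\gamma^{1/3}} \le s \bigg] = F_{\TW}(s).
\end{flalign*}
A direct bookkeeping calculation yields $2\gamma = 2\theta(xy)^{1/2}(1-t)^{-1/2} N = \mu N$ and $\gamma^{1/3} = \theta^{1/3} (xy)^{1/6}(1-t)^{-1/6} N^{1/3} = \sigma N^{1/3}$, so the centering and scaling exactly match those in the statement of the theorem.

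Finally, I would transfer this distributional limit from $\ell(\lambda)$ to $\mathfrak{H}(xN, yN)$. Since $\sigma N^{1/3} \to \infty$, the Tracy--Widom limit for $\ell(\lambda)$ implies $\sup_{z \in \mathbb{R}} \mathbb{P}[z < \ell(\lambda) \le z + 1] \to 0$, verifying the first condition \eqref{anbn} in the definition of asymptotic equivalence. The second condition then gives uniform convergence of cumulative distribution functions, which translates the limit for $\ell(\lambda)$ into the asserted limit for $\mathfrak{H}(xN, yN)$. The substantive work has essentially all been done upstream: the real obstacle in reaching this asymptotic statement is the matching of vertex model expectations with Macdonald averages (\Cref{pvm}) and its distillation into the identification of $\mathfrak{H}$ with a Schur length (\Cref{lengthheight}); once these are in hand, the proof of Tracy--Widom fluctuations reduces to a routine scaling check against a classical input.
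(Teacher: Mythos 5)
Your proposal is correct and follows essentially the same route as the paper: invoke \Cref{lengthheight} with $m=0$ to identify $\mathfrak{H}(xN,yN)$ with the length of a Poissonized Plancherel partition of parameter $\gamma = \theta N(xy)^{1/2}(1-t)^{-1/2}$, then apply the classical Tracy--Widom asymptotics of \cite{DLLSRP,AMSG,DOPM}. Your explicit computation of $\gamma$ and the careful verification of both conditions in the definition of asymptotic equivalence are the details the paper leaves implicit (and in fact the specializations written in the paper's proof appear to have a typo, carrying over the Hall--Littlewood parameters $(1-t)y\theta N$ and $x\theta N$ in place of the Schur ones from \Cref{lengthheight}; your version is the one that actually produces the stated $\mu$ and $\sigma$).
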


	\begin{proof} 
	
	Under the Schur measure with specializations $\big( \boldsymbol{0} \boldsymbol{\mid} \boldsymbol{0} \boldsymbol{\mid} (1 - t) y \theta N \big)$ and $(\boldsymbol{0} \boldsymbol{\mid} \boldsymbol{0} \boldsymbol{\mid} x \theta N)$, Theorem 5 of \cite{AMSG} or Proposition 1.5 and Theorem 1.7 of \cite{DOPM} (see also Remark 2 of the survey \cite{IP}) gives
	\begin{flalign*} 
		\displaystyle\lim_{N \rightarrow \infty} \mathbb{P} \bigg[ \displaystyle\frac{\ell (\lambda) - \mu N}{\sigma N^{1/3}} \le s \bigg] = F_{\TW} (s).
	\end{flalign*}

	\noindent This, together with \Cref{lengthheight}, implies the theorem.
	\end{proof} 
	
	 The $m \ge 1$ case of \Cref{lengthheight} corresponds to the $t$-PNG model with boundary conditions, as in \Cref{boundarypaths}. Although we will not pursue this here, the associated Schur measure can be analyzed to access the large scale asymptotics for this model. For example, if $\beta_1 = \beta_2 = \cdots = \beta_m = \beta$ (or are more generally within $N^{-1/3}$ of one another), then the height function $\mathfrak{H} (x, y)$ will exhibit a \emph{Baik--Ben Arous--P\'{e}ch\'{e} transition} \cite{PTLECSCM} across a characteristic line. To the left of this line, it will exhibit $N^{1/2}$ fluctuations scaling to the largest eigenvalue of an $m \times m$ GUE matrix; to the right of this line, it will exhibit $N^{1/3}$ fluctuations scaling to the Tracy--Widom GUE distribution; and along this line it will converge to an interpolation between the two, known as a level $m$ Baik--Ben Arous--P\'{e}ch\'{e} distribution. For $m = 1$, this was established for the $t = 0$ PNG model in \cite{LDPGMES}.

	 \subsection{Limit to the KPZ Equation} 
	 
	 \label{LimitEquation} 
	 
	 The fact that our PNG model is dependent on a parameter $t \in [0, 1)$ enables us to consider its scaling limit as $t$ tends to $1$. In this section we explain how, under this scaling limit, the $t$-PNG height function converges to the Cole--Hopf solution of the \emph{Kardar--Parisi--Zhang (KPZ) equation} with narrow wedge initial data. The latter is defined as $\mathcal{H}_t (x) = - \log \mathcal{Z}_t (x)$, where $\mathcal{Z}_t (x)$ is the solution of the stochastic heat equation with multiplicative noise, given by
	 \begin{flalign*}
	 	\partial_t \mathcal{Z}_t (x) = \displaystyle\frac{1}{2} \partial_x^2 \mathcal{Z} (x) + \mathcal{Z}_t (x) \cdot \dot{\mathcal{W}}_t (x), \qquad \text{with initial data $\mathcal{Z}_0 (x) = \delta (x)$},
	 \end{flalign*} 
 
 	\noindent where $\dot{\mathcal{W}}_t (x)$ denotes space-time white noise, and $\delta (x)$ denotes the delta function; we refer to \cite{EUC,I} for surveys on the KPZ equation and universality class.
	 
	 Given this notation, we have the following theorem stating convergence of a (normalization) of the $t$-PNG height function $\mathfrak{H} (\chi, \eta)$ (recall \Cref{PolynomialsFuse}) to the solution $\mathcal{H}$ of the KPZ equation, in the limit as $t$ tends to $1$. Observe in this theorem that we also let the intensity of the model simultaneously tend to $\infty$, while keeping the coordinates $(\chi, \eta)$ fixed. We only outline the proof of the below theorem, since it follows from \Cref{modelequation} and results of \cite{AMSG,MMEP} in a similar way to what was done in the proof of Theorem 11.6  (and Remark 11.7) of \cite{ADPP}.

	\begin{thm}
		
		\label{equationlimit}
		
		Fix real numbers $\chi, \eta > 0$, let $\varepsilon > 0$ be a parameter, and denote
		\begin{flalign}
			\label{ttheta} 
			T = 2 \sqrt{\chi \eta}; \qquad t =  t_{\varepsilon} = e^{-\varepsilon}; \qquad \theta = \theta_{\varepsilon} = \varepsilon^{-3}.
		\end{flalign}
	
		\noindent Consider the $t$-PNG model with intensity $(1 - t) \theta^2$ on $\mathbb{R}_{> 0}^2$, as in \Cref{modelq}, and define the normalization $\mathfrak{n} (\chi, \eta)$ of its height function by 
		\begin{flalign} 
			\label{nchieta}
			\mathfrak{n}_{\varepsilon} (\chi, \eta) = \varepsilon \big( \mathfrak{H} (\chi, \eta) - \varepsilon^{-3} T \big) - \log \varepsilon.
		\end{flalign}
	
		\noindent Then, as $\varepsilon$ tends to $0$, the random variable $\mathfrak{n}_{\varepsilon} (\chi, \eta)$ converges weakly to $\frac{T}{24} - \mathcal{H}_T (0)$.
	\end{thm}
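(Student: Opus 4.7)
The plan is to exploit the matching with Schur measures from \Cref{modelequation}(1) and then invoke the scaling-limit convergence of the Poissonized Plancherel measure to the narrow-wedge KPZ solution, in analogy with Theorem 11.6 and Remark 11.7 of \cite{ADPP}. Since the theorem considers the $t$-PNG model with intensity $(1-t)\theta^2$ rather than $\theta^2$, I would first apply \Cref{modelequation}(1) with $\theta$ replaced by $\sqrt{1-t}\,\theta$. The resulting Schur measure has specializations $\rho_1 = (\boldsymbol{0} \boldsymbol{\mid} \boldsymbol{0} \boldsymbol{\mid} \sqrt{1-t}\,\eta\theta)$ and $\rho_2 = (\boldsymbol{0} \boldsymbol{\mid} \boldsymbol{0} \boldsymbol{\mid} \chi\theta/\sqrt{1-t})$, which together form a Poissonized Plancherel measure whose effective intensity is $\chi\eta\theta^2 = (T/2)^2\varepsilon^{-6}$. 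Rewriting the right side of \eqref{1n} as $\mathbb{E}_{\SM}\big[\prod_{j\ge 1}(1 + \zeta t^{\lambda_j - j})\big]$ (by absorbing the prefactor into the product, using $\lambda_j = 0$ for $j > \ell(\lambda)$) gives a clean observable to analyze.

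To convert the $t$-Pochhammer observable on the left side of \eqref{1n} into a genuine Laplace transform, I would then choose $\zeta = \zeta_\varepsilon = -\varepsilon^{-1}\exp(-\varepsilon^{-2}T - r)$ for a real parameter $r$, so that $-\zeta_\varepsilon t^{-\mathfrak{H}(\chi,\eta)} = \exp(\mathfrak{n}_\varepsilon(\chi,\eta) - r)$. The classical $q$-Pochhammer asymptotics (with $q = e^{-\varepsilon} \to 1$, cf.\ \cite{ACQ}) yield that along any subsequential weak limit $\mathfrak{n}_\infty$ of $\mathfrak{n}_\varepsilon(\chi,\eta)$,
\begin{flalign*}
\lim_{\varepsilon \to 0}\mathbb{E}\!\left[\frac{1}{(-\zeta_\varepsilon t^{-\mathfrak{H}(\chi,\eta)};\, t)_\infty}\right] = \mathbb{E}\!\left[\exp\!\big(-e^{\mathfrak{n}_\infty - r}\big)\right].
\end{flalign*}
On the Schur side, the shifted parts $\{\lambda_j - j\}_{j\ge 1}$ form a determinantal point process on $\mathbb{Z}$ governed by the discrete Bessel kernel $K_{\mathrm{Bessel}}$ of parameter $T\varepsilon^{-3}$, by \cite{AMSG, DOPM}, so
\begin{flalign*}
\mathbb{E}_{\SM}\!\Bigg[\prod_{j \ge 1}(1 + \zeta_\varepsilon t^{\lambda_j - j})\Bigg] = \det\!\big(\mathbf{1} + \zeta_\varepsilon\, t^{(\cdot)}\, K_{\mathrm{Bessel}}\big).
\end{flalign*}
Under the chosen scaling, the factor $\zeta_\varepsilon t^{(\cdot)}$ concentrates on the soft-edge window at the top of the Bessel spectrum, and the rescaled operator converges (by the steepest-descent analysis of \cite{AMSG, MMEP}) to a Fredholm kernel whose determinant equals $\mathbb{E}\big[\exp(-e^{T/24 - \mathcal{H}_T(0) - r})\big]$, via the narrow-wedge KPZ Laplace-transform formula of \cite{ACQ, SS1}. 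Matching the two limits and invoking uniqueness of Laplace transforms of $e^{-X}$ then identifies the weak limit of $\mathfrak{n}_\varepsilon(\chi,\eta)$ as $T/24 - \mathcal{H}_T(0)$.

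The main obstacle I anticipate lies in the convergence of the Fredholm determinant: one needs uniform steepest-descent asymptotics for the discrete Bessel kernel whose parameter grows like $\varepsilon^{-3}$, together with tail bounds on the multiplicative functional $\zeta_\varepsilon t^{(\cdot)}$ strong enough to justify passing the $\varepsilon \to 0$ limit inside the Fredholm expansion term by term. Establishing tightness of $\mathfrak{n}_\varepsilon(\chi,\eta)$ (implicitly used when extracting $\mathfrak{n}_\infty$) also requires moment bounds that should fall out of the same Fredholm analysis. The corresponding ingredients have been developed in \cite{AMSG, MMEP} and orchestrated in the proof of Theorem 11.6 of \cite{ADPP}, so the task reduces to adapting that framework to the present multiplicative functional, replacing the TASEP-type Laplace kernel there with $\zeta_\varepsilon t^{(\cdot)}$ here.
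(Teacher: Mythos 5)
Your high-level strategy is the same as the paper's: apply \Cref{modelequation}(1) (with $\theta$ renormalized to match the intensity $(1-t)\theta^2$) to reduce to a Poissonized Plancherel measure of parameter $\chi\eta\theta^2$, take the edge/Airy scaling limit, and identify the resulting Airy expectation with the narrow-wedge KPZ Laplace transform. However, there are two genuine problems with the Schur-side half of your argument.

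\textbf{The rewriting of the Schur observable is wrong.} You claim that the right side of \eqref{1n} equals $\mathbb{E}_{\SM}\big[\prod_{j\ge1}(1+\zeta t^{\lambda_j-j})\big]$ ``by absorbing the prefactor.'' This product diverges: for $j>\ell(\lambda)$ one has $\lambda_j-j=-j$ and $t^{-j}\to\infty$. Testing against $\lambda=\emptyset$ shows the left side equals $1/(-\zeta;t)_\infty$, a finite quantity, while your right side is the divergent $\prod_{j\ge1}(1+\zeta t^{-j})$. The paper instead uses the Maya-diagram complementation $\{\lambda_i-i\}_{i\ge1}\sqcup\{j-\lambda_j'-1\}_{j\ge1}=\mathbb{Z}$ to rewrite the observable as $\prod_{j\ge1}(1+\zeta t^{j-\lambda_j'-1})^{-1}$, which converges since $j-\lambda_j'-1\to+\infty$. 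Under the Poissonized Plancherel measure, $\lambda$ and $\lambda'$ are equidistributed, so this conjugation is harmless and one can still appeal to the same Airy convergence. Your observable must similarly be traded for a product over the ``hole'' positions before any asymptotics can be taken.

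\textbf{Consequently the Fredholm determinant you write is not well-defined.} The expression $\det\big(\mathbf{1}+\zeta_\varepsilon\, t^{(\cdot)} K_{\mathrm{Bessel}}\big)$ is not a trace-class (or even bounded) perturbation of the identity: the multiplier $\zeta_\varepsilon t^{x}$ blows up as $x\to-\infty$ precisely where $K_{\mathrm{Bessel}}$ is close to the identity (the Fermi sea). One would need to pass to the hole kernel $\mathbf{1}-K_{\mathrm{Bessel}}$ with the bounded multiplier $\zeta t^x/(1+\zeta t^x)$; at that point there is no advantage over arguing directly at the level of point-process expectations, as the paper does. The paper in fact avoids Fredholm determinants and the discrete Bessel steepest descent altogether: it cites Proposition 4.3 of \cite{AMSG} for a strong form of convergence of the Plancherel edge to the Airy point process, and Theorem 2.1 of \cite{MMEP}, which expresses $\mathbb{E}\big[\exp(-\zeta_0 e^{T/24}\mathcal{Z}_T(0))\big]$ directly as $\mathbb{E}\big[\prod_j(1+\zeta_0\exp(2^{-1/3}T^{1/3}\mathfrak{a}_j))^{-1}\big]$, so the match is between two multiplicative functionals over point processes without routing through \cite{ACQ,SS1} or kernel-level asymptotics. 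Your $t$-Pochhammer analysis of the left-hand side of \eqref{1n} (giving $\exp(-e^{\mathfrak{n}_\varepsilon-r})$) is in line with the paper's $t$-binomial expansion and is fine.
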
 

	\begin{proof}[Proof (Outline)]
	
	By the discussion at the end of the proof of Theorem 11.6 of \cite{ADPP}, it suffices to verify that the limit as $\varepsilon$ tends to $0$ of the Laplace transform of $e^{\mathfrak{n}_{\varepsilon} (\chi, \eta)}$ is given by that of $e^{T / 24  - \mathcal{H}_T (0)} = e^{T/24} \mathcal{Z}_T (0)$.\footnote{Indeed, it is quickly verified from \eqref{zeta0} that the sequence of random variables $\{ e^{\mathfrak{n}_{\varepsilon} (\chi, \eta)} \}$ is tight. Since any probability distribution is uniquely characterized by its Laplace transform, \eqref{zeta0} also implies that any limit point must converge to $e^{T/24} \mathcal{Z}_T (0)$. Thus, by taking the logarithm, we deduce that $\mathfrak{n}_{\varepsilon} (\chi, \eta)$ converges to $\frac{T}{24} - \mathcal{H}_T (0)$.} So, for a fixed $\zeta_0 \in \mathbb{R}_{> 0}$ we will show that
	\begin{flalign}
	\label{zeta0} 
	\displaystyle\lim_{\varepsilon \rightarrow 0} \mathbb{E} \Big[ \exp \big(-\zeta_0 e^{\mathfrak{n}_{\varepsilon} (\chi, \eta)} \big) \Big] = \mathbb{E} \Big[ \exp \big( -\zeta_0 e^{T/24} \mathcal{Z}_T (0) \big) \Big].
	\end{flalign}

	\noindent The right side of \eqref{zeta0} is expressible in terms of the Airy point process\footnote{This is the determinantal point process on $\mathbb{R}$ with correlation kernel given by $K_{\text{Ai}} (x, y) = \int_0^{\infty} \text{Ai} (x + u) \text{Ai} (y + u) du$, where $\text{Ai} (x): \mathbb{R} \rightarrow \mathbb{R}$ denotes the Airy function.} $\mathcal{A} = (\mathfrak{a}_1, \mathfrak{a}_2, \ldots )$. In particular, Theorem 2.1 of \cite{MMEP} states that 
	\begin{flalign*}
		\mathbb{E} \Big[ \exp \big( - \zeta_0 e^{T / 24} \mathcal{Z}_T (0) \big) \Big] = \mathbb{E} \Bigg[ \displaystyle\prod_{j = 1}^{\infty} \displaystyle\frac{1}{1 + \zeta_0 \exp (2^{-1/3} T^{1/3} \mathfrak{a}_j)} \Bigg],
	\end{flalign*} 
	
	\noindent so it suffices to establish
	\begin{flalign}
		\label{n2lambda} 
		\displaystyle\lim_{\varepsilon \rightarrow 0} \mathbb{E} \Big[ \exp \big(-\zeta_0 \mathfrak{n}_{\varepsilon} (\chi, \eta) \big) \Big] =	\mathbb{E} \Bigg[ \displaystyle\prod_{j = 1}^{\infty} \displaystyle\frac{1}{1 + \zeta_0 \exp (2^{-1/3} T^{1/3} \mathfrak{a}_j)} \Bigg].
	\end{flalign}
	
	To that end, set $\zeta = \zeta_0 t^{2 \theta \sqrt{\chi \eta}} = t^{\theta T}$. Observe for any partition $\lambda = (\lambda_1, \lambda_2, \ldots , \lambda_{\ell}) \in \mathbb{Y}$ with conjugate (transpose) $\lambda' = (\lambda_1', \lambda_2', \ldots , \lambda_{\ell'}')$ that $\{ \lambda_i - i \}_{i \ge 1} \cup \{ j - \lambda_j' - 1 \}_{j \ge 1} = \mathbb{Z}$ (where $\lambda_i = 0$ and $\lambda_j' = 0$ for $i \ge \ell$ and $j \ge \ell'$, respectively). This implies
	\begin{flalign*} 
		\displaystyle\frac{1}{(-\zeta^{-\ell (\lambda)}; t)_{\infty}} \displaystyle\prod_{j = 1}^{\ell (\lambda)} (1 + \zeta t^{\lambda_j - j}) = \displaystyle\prod_{j = 1}^{\infty} \displaystyle\frac{1}{1 + \zeta t^{j - \lambda_j' - 1}}.
	\end{flalign*} 
	
	\noindent This, together with \eqref{1n} (with the $\theta$ there replaced by $(1 - t)^{-1/2} \theta$ here), yields
	\begin{flalign}
		\label{n1lambda} 
		\mathbb{E} \Bigg[ \displaystyle\frac{1}{(-\zeta t^{-\mathfrak{H} (\chi, \eta)}; t)_{\infty}} \Bigg] = \mathbb{E} \Bigg[ \displaystyle\prod_{j = 1}^{\infty} \displaystyle\frac{1}{1 + \zeta t^{j - \lambda_j' - 1}} \Bigg],
	\end{flalign}

	\noindent where the expectation on the left side is with respect to the $t$-PNG model with intensity $\theta^2 (1 - t)$ and that on the right side is with respect to the Schur measure with specializations $\rho_1 = ( \boldsymbol{0} \boldsymbol{\mid} \boldsymbol{0} \boldsymbol{\mid} \eta \theta)$ and $\rho_2 = ( \boldsymbol{0} \boldsymbol{\mid} \boldsymbol{0} \boldsymbol{\mid} \chi \theta)$.
	
	We will show that, as $\varepsilon$ tends to $0$, the left and right sides of \eqref{n1lambda} converge to those of \eqref{n2lambda}, respectively. We begin with the right sides, which will follow from results of \cite{AMSG}. In particular, from the choices $\zeta = \zeta_0 t^{2 \theta \sqrt{\chi \eta}}$, $t = e^{-\varepsilon}$, and $\varepsilon = \theta^{-1/3}$ (recall \eqref{ttheta}), we find that 
	\begin{flalign}
		\label{jlambda1} 
		\mathbb{E} \Bigg[ \displaystyle\prod_{j = 1}^{\infty} \displaystyle\frac{1}{1 + \zeta t^{j - \lambda_j' - 1}} \Bigg] = \mathbb{E} \Bigg[ \displaystyle\prod_{j = 1}^{\infty} \Bigg( 1 + \zeta_0 \exp \bigg( \displaystyle\frac{\lambda_j' - j - 2 \theta \sqrt{\chi \eta} + 1}{\theta^{1/3}} \bigg) \Bigg)^{-1} \Bigg].
	\end{flalign}
	
	\noindent Next, Theorem 4 of \cite{AMSG} states that $\big\{ (\chi \eta)^{-1/6} \theta^{-1/3} (\lambda_j' - 2 \theta \sqrt{\chi \eta}) \big\}_{j \ge 1}$ converges weakly to $\mathcal{A}$ as $\varepsilon$ tends to $0$. A slightly stronger form of this convergence, given by Proposition 4.3 of \cite{AMSG} (see also the proof of Theorem 11.6 of \cite{ADPP}), quickly implies that 
	\begin{flalign}
		\label{jlambda2}
		\displaystyle\lim_{\varepsilon \rightarrow 0} \Bigg[ \displaystyle\prod_{j = 1}^{\infty} \Bigg(1 + \zeta_0 \exp \bigg( \displaystyle\frac{\lambda_j' - j - 2 \theta \sqrt{\chi \eta} + 1}{\theta^{1/3}} \bigg) \Bigg)^{-1} \Bigg] = \mathbb{E} \Bigg[ \displaystyle\prod_{j = 1}^{\infty} \displaystyle\frac{1}{1 + \zeta_0 \exp \big( (\chi \eta)^{1/6} \mathfrak{a}_j \big)} \Bigg].
	\end{flalign}

	\noindent By the choice of $T = 2 \sqrt{\chi \eta}$, \eqref{jlambda1}, and \eqref{jlambda2} together imply that the right side of \eqref{n1lambda} converges to that of \eqref{n2lambda} as $\varepsilon$ tends to $0$. 
	
	Next, we analyze the left side of \eqref{n1lambda}. The $t$-binomial theorem, \eqref{ttheta}, and \eqref{nchieta} together give 
	\begin{flalign*}
		\displaystyle\frac{1}{(-\zeta t^{-\mathfrak{H} (\chi, \eta)}; t)_{\infty}} = \displaystyle\sum_{j = 0}^{\infty} \displaystyle\frac{(-\zeta)^j t^{-j \mathfrak{H} (\chi, \eta)}}{(t; t)_j} & = \displaystyle\sum_{j = 0}^{\infty} \displaystyle\frac{(1 - t)^j}{(t; t)_j} \Bigg( \displaystyle\frac{\zeta_0}{t - 1} \exp \bigg( \displaystyle\frac{1}{\theta^{1/3}} \big( \mathfrak{H} (\chi, \eta) - 2 \theta \sqrt{\chi \eta} \big) \bigg) \Bigg)^j \\
		& = \displaystyle\sum_{j = 0}^{\infty} \displaystyle\frac{(1 - t)^j}{(t; t)_j} \bigg( \displaystyle\frac{\zeta_0 e^{\mathfrak{n}_{\varepsilon} (\chi, \eta)}}{\varepsilon (t - 1)} \bigg)^j.
	\end{flalign*}
	
	\noindent Since
	\begin{flalign*} 
	\displaystyle\lim_{\varepsilon \rightarrow 0} \displaystyle\frac{(1 - t)^j}{(t; t)_j} = \displaystyle\frac{1}{j!}; \qquad \displaystyle\lim_{\varepsilon \rightarrow 0} \varepsilon (t - 1) = -1,
	\end{flalign*} 

	\noindent it follows that 
	\begin{flalign}
		\label{zetatn}
		\displaystyle\lim_{\varepsilon \rightarrow 0} \displaystyle\frac{1}{(-\zeta t^{-\mathfrak{H} (\chi, \eta)}; t)_{\infty}} = \displaystyle\sum_{j = 0}^{\infty} \displaystyle\frac{1}{j!} \big( - \zeta_0 e^{\mathfrak{n} (\chi, \eta)} \big)^j = \exp \big(- \zeta_0 e^{\mathfrak{n} (\chi, \eta)} \big).
	\end{flalign}
	
	\noindent This indicates that the left side of \eqref{n1lambda} converges to \eqref{n2lambda} as $\varepsilon$ tends to $0$. Hence, \eqref{n1lambda}, \eqref{zetatn}, \eqref{jlambda1}, and \eqref{jlambda2} together imply \eqref{n2lambda} and thus the theorem.
	\end{proof}

	Although we will not pursue this here, let us mention that a similar scaling limit as considered in \Cref{equationlimit}, for the $t$-PNG model with $(\boldsymbol{\beta}; \theta; t)$-boundary conditions should give rise to the solution of the KPZ equation with spiked initial data, as considered in \cite{FEF}.

	\appendix 
	
	\section{The $t$-PNG Model and Patience Sorting} 
	
	\label{ModelSort}
	
	In this section we provide an interpretation for the $t$-PNG model through patience sorting ``with errors.'' Before explaining this in more detail, we first recall the standard patience sorting algorithm; see Section 1.1 of \cite{LSPST}. Starting with a deck of $N$ cards labeled $\{ 1, 2, \ldots , N \}$, one begins drawing cards from it and sorting them into piles as follows. 
	
	\begin{enumerate} 
		\item \label{1} Suppose the card drawn has label $i$, and search for the pile with the smallest top card that is greater than $i$.
		\begin{enumerate}
			\item If such a pile exists, then place card $i$ on top of that pile.
			\item If no such pile exists, create a new pile consisting of card $i$.
		\end{enumerate}
		\item Repeat this procedure until all cards are sorted into piles.
	\end{enumerate}

	Observe in particular that, if these piles are ordered according to their time of creation, then their top cards are increasing. Thus, the search from part \ref{1} of this algorithm scans through the piles in order and stops upon reaching one whose top card exceeds $i$. 

	\begin{example}
		
		\label{6n} 
		
		Suppose $N = 6$ and the deck is ordered $(5, 2, 1, 3, 4, 6)$ from top to bottom. Then, after the first card is drawn, the set of piles is $ \big\{ (5) \big\}$; after the second, it is $\big\{ (2, 5) \big\}$; after the third, it is $\big\{ (1, 2, 5) \big\}$; after the fourth, it is $\big\{ (1, 2, 5), (3) \big\}$; after the fifth, it is $\big\{ (1, 2, 5), (3), (4) \big\}$; and after the sixth, it is $\big\{ (1, 2, 5), (3), (4), (6) \big\}$. 
		
	\end{example}
	
	The $t = 0$ PNG model with intensity $\theta^2$ is known to be closely related to patience sorting, applied to a uniformly randomly shuffled deck of $N$ cards, where $N$ is selected according to an independent exponential distribution with parameter $\theta^2$. In particular, the total number of piles created under the patience sorting algorithm (equivalently, the longest increasing subsequence of the deck) has the same law as the height function $\mathfrak{H} (1, 1)$ (recall the beginning of \Cref{PolynomialsFuse}) for the $t = 0$ PNG model. The analogous equivalence for the $t$-PNG model will be with the following variant of patience sorting that allows for random ``errors'' to occur with probability $t$. 
 	
 	\begin{definition} 
 		
 		\label{sortn}
 		
 		Starting with a deck of $N$ cards labeled $\{ 1, 2, \ldots , N \}$, the \emph{patience sorting algorithm with error probability $t$} draws cards from the deck and sorts them into piles as follows. 
 	\begin{enumerate}
		\item Suppose the card drawn has label $i$, and consider all piles $\mathcal{P}_1, \mathcal{P}_2, \ldots , \mathcal{P}_g$ whose top cards are greater than $i$; denote their top cards by $c_1, c_2, \ldots , c_g$, respectively, where $c_1 < c_2 < \cdots < c_g$. Set $k = 1$.
		\begin{enumerate}
			\item \label{cg} Suppose $k \le g$.
			\begin{enumerate}  
				\item With probability $1 - t$, place card $i$ on top of pile $\mathcal{P}_k$.
				\item With probability $t$, ``miss'' this pile by changing $k$ to $k + 1$ and repeating step \ref{cg}.
				\end{enumerate}
			\item If $k > g$, then create a new pile consisting of card $i$.
		\end{enumerate}
		\item Repeat this procedure until all cards are sorted into piles.
 	\end{enumerate}
 
 \end{definition} 
	
	Observe in the case $t = 0$ (corresponding to no ``misses''), \Cref{sortn} reduces to the original patience sorting algorithm described above. 
	
	\begin{example}
		
		\label{6n1} 
		
		Again suppose $N = 6$ and the deck is ordered $(5, 2, 1, 3, 4, 6)$ from top to bottom. After the first card is drawn, a pile $(5)$ is deterministically formed. After the second card is drawn, with probability $1 - t$ it is placed on top of this pile (forming $(2, 5)$); with probability $t$, pile $(5)$ is ``missed,'' and card $2$ is placed in its own pile (forming $\big\{ (2), (5) \big\}$). Suppose that the latter event occurs. The third card is then placed on pile $(2)$ with probability $1 - t$; on pile $(5)$ with probability $t - t^2$; and in its own pile with probability $t^2$. One continues in this way until all cards are placed.
		
	\end{example}

	The following proposition explains a relation between the $t$-PNG height function and the number of piles created under applying this variant of patience sorting to a random permutation; its proof more generally explains that broken lines in the former directly correspond to piles in the latter.
	
	\begin{prop}
		
		\label{modelsort} 
		
		Fix parameters $t \ge 0$ and $\theta > 0$; let $N$ be a $\theta^2$-exponentially distributed random variable; and apply the patience sorting algorithm with error probability $t$ (from \Cref{sortn}) to a uniformly randomly shuffled deck with $N$ cards. The number of piles created under this algorithm has the same law as the height function $\mathfrak{H} (1, 1)$ of the $t$-PNG model with intensity $\theta^2$ on $[0, 1] \times [0, 1]$, under empty boundary conditions (from \Cref{modelq}). 
		
	\end{prop}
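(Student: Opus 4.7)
The plan is to process the nucleation events of the $t$-PNG model sequentially, in order of increasing $x$-coordinate, and to show step by step that this procedure coincides in law with the patience sorting algorithm with error probability $t$ applied to a uniformly random permutation. First I would condition on the total number of nucleation events $N$ in $[0,1]^2$, which under the $t$-PNG model of \Cref{modelq} is Poisson distributed with parameter $\theta^2$. Given $N$, the nucleation points are i.i.d.\ uniform on $[0,1]^2$; after labeling them $p_1 = (x_1, y_1), \ldots, p_N = (x_N, y_N)$ with $x_1 < \cdots < x_N$, the ranks of the $y_k$ among $\{y_1, \ldots, y_N\}$ form a uniformly random permutation $\sigma \in S_N$ by exchangeability, and $\sigma$ plays the role of the card-drawing order from a uniformly shuffled deck.

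The core of the proof will be a coupling between the two dynamics. I would define the \emph{state} of the $t$-PNG model after processing $p_1, \ldots, p_k$ to be the set of $y$-heights of east rays currently active at $x$-coordinate slightly larger than $x_k$, and the state of the sorting algorithm after $k$ draws to be the sorted list of pile tops. I would then prove by induction on $k$ that these two states agree in joint distribution under the natural rank identification of $y$-values with card labels. For the inductive step, suppose the active east-ray heights are $y_{(1)} < \cdots < y_{(m)}$ and the new point is $p = (x, y)$ with $y_{(k-1)} < y < y_{(k)}$ (the boundary cases $y < y_{(1)}$ and $y > y_{(m)}$ being similar). The east ray from $p$ joins the state at height $y$; the north ray from $p$ travels upward and encounters the active east rays in order of increasing height, first meeting the one at $y_{(k)}$. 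By the $t$-PNG rule, each collision independently annihilates with probability $1 - t$ and passes through with probability $t$, so with probability $(1 - t) t^j$ for $j \in \{0, 1, \ldots, m - k\}$ the east ray at $y_{(k+j)}$ is the one destroyed, and with probability $t^{m-k+1}$ no annihilation occurs. The resulting updated state matches exactly the transition induced by patience sorting with errors, in which card $y$ tries piles $k, k+1, \ldots, m$ in order, missing each with probability $t$ and creating a new pile only if all are missed.

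Once the coupling is established, the proposition follows immediately: $\mathfrak{H}(1, 1)$ equals the number of east rays crossing $\{1\} \times [0, 1]$, i.e., those still active at $x = 1$, which is precisely the number of piles produced by the sorting algorithm on the corresponding random permutation.

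The hard part will be a careful justification that the sequential update is well-posed and self-contained. In particular, one must verify that the collisions of a given north ray with active east rays occur in a definite order (from lowest height above the nucleation point upward), that each such collision is independent of the others and of the future nucleations, and that no east ray generated from a subsequently processed point can retroactively alter the outcome. These facts all follow from the planar geometry of the model, since east rays never meet east rays, north rays never meet north rays, and the fate of the north ray from $p$ is decided before any point $p'$ with $x' > x$ is reached; but this deserves an explicit geometric argument to make the induction fully rigorous.
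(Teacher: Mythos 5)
Your proposal is correct and follows essentially the same route as the paper: process the nucleations in order of increasing $x$-coordinate, identify $y$-ranks with card labels (so the $x$-ordering of the Poisson points gives a uniformly random permutation), and match the collision dynamics of each north ray—annihilating each east ray above it independently with probability $1-t$, passing through with probability $t$—to the pile-by-pile ``miss'' mechanism of the patience-sorting algorithm with errors. The paper packages the same bijection slightly differently, via ``broken lines'' (maximal chains of annihilating rays) identified with piles, and leaves the well-posedness of the sequential update implicit just as you flag; note also that you correctly describe $N$ as Poisson$(\theta^2)$-distributed, the ``$\theta^2$-exponentially distributed'' in the proposition's statement being a slip.
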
 
 	
 	\begin{proof}
 		
 		Sample the $t$-PNG model on $[0, 1] \times [0, 1]$ through \Cref{modelq}, and denote the associated Poisson point process (corresponding to the locations of nucleation events) by $\mathcal{V} = (v_1, v_2, \ldots, v_N) \subset [0, 1] \times [0, 1]$; then $N$ is a $\theta^2$-exponentially distributed random variable. Order $\mathcal{V}$ so that $v_i = (x_i, y_{\sigma (i)})$, where $x_1 < x_2 < \cdots < x_N$ and $y_1 < y_2 < \cdots < y_N$. Then, $\sigma$ is a uniformly random permutation on $\{ 1, 2, \ldots , N \}$; we associate it with the order of the deck to be sorted.
 		
 		A \emph{broken line} in the $t$-PNG model is defined to be a maximal increasing sequence $(i_1, i_2, \ldots , i_k) \subseteq \{ 1, 2, \ldots , N \}$ such that the horizontal ray emanating from $v_{i_j}$ annihilates with the vertical one emanating from $v_{i_{j + 1}}$, for each $j \in [1, k - 1]$. For example, in the sample depicted in \Cref{model1}, $\sigma = (6, 3, 4, 2, 1, 5)$ and there are three broken lines given by $(1, 3, 5)$, $(2, 4)$, and $(6)$. We associate with any broken line $(i_1, i_2, \ldots , i_k)$ a pile of cards $\big( \sigma (i_k), \sigma (i_{k - 1}), \ldots , \sigma (i_1) \big)$ (ordered from top to bottom). When a nucleation occurs at some point $v_i = (x_i, y_{\sigma (i)})$, the vertical ray emanating from $v_i$ can collide with a horizontal ray along a broken line; the latter corresponds to some pile $\mathcal{P}$, with top card greater than $i$. With probability $1 - t$, the two rays annihilate each other, meaning that card $\sigma (i)$ is appended to the top of $\mathcal{P}$. With probability $t$, the two rays pass through each other, meaning that pile $\mathcal{P}$ is ``missed'' when sorting $\sigma (i)$, and we repeat the procedure on the next broken line that intersects the vertical ray emanating from $v_i$. 
 		
 		These sorting dynamics induced by the $t$-PNG model coincide with those of the patience sorting algorithm with error probability $t$. Thus, the family of broken lines sampled under the former has the same law as the family of piles created under applying the latter to $\sigma$. This implies the proposition, since $\mathfrak{H} (1, 1)$ counts the number of such broken lines. 
 	\end{proof}

	\section{Proof of \Cref{modelconverge}}
	
	\label{ConvergeModel} 
	
	In this section we establish \Cref{modelconverge}. Let $\mathcal{E}$ denote a (complemented) path ensemble on $\mathbb{Z}_{> 0}^2$, sampled under the vertex model described in \Cref{modelconverge}. We will couple $\mathcal{E}$ with an ensemble $\mathcal{F}$ sampled from a slightly different vertex model that can be more directly seen to converge to the $t$-PNG process. The weights of this latter vertex model are $\Phi (i_1, h_1; i_2, h_2)$, defined by setting
	\begin{flalign}
		\label{probabilityf} 
		\begin{aligned}
			\Phi (0, 0; 0, 0) = 1 - (\theta \varepsilon)^2; \qquad & \Phi (0, 0; 1, 1) = (\theta \varepsilon)^2; \qquad \Phi (1, 0; 1, 0) = 1; \qquad \Phi (0, 1; 0, 1) = 1; \\
			& \Phi (1, 1; 0, 0) = 1 - t; \qquad \Phi (1, 1; 1, 1) = t,
		\end{aligned} 
	\end{flalign}
	
	\noindent and $\Phi (i_1, h_1; i_2, h_2) = 0$ for all other integer quadruples $(i_1, h_1; i_2, h_2)$. 
	
	It is quickly verified that a random ensemble $\mathcal{F}$ sampled under the vertex model with weights \eqref{probabilityf}, under empty boundary data, converges to the $t$-PNG model, as $\varepsilon$ tends to $0$. Indeed, by the first two probabilities in \eqref{probabilityf}, upon scaling the rectangle $[1, X] \times [1, Y]$ by $\varepsilon$, the law for the set of locations with arrow configuration $(0, 0; 1, 1)$ converges to a Poisson point process on $\mathcal{R}_{\chi; \theta}$ with intensity $\theta^2$. Under the $t$-PNG model, these correspond to nucleation events when a vertical and horizontal ray are created. By the second two probaiblities in \eqref{probabilityf}, these rays proceed until meeting another ray. By the last two probabilities in \eqref{probabilityf}, when a horizontal ray collides with a vertical one, they are annihilated with probability $1 - t$ and continue through each other with probability $t$. This description matches with that of the $t$-PNG model provided in \Cref{modelq}.
	
	Thus, it remains to couple $\mathcal{E}$ and $\mathcal{F}$ on $[1, X] \times [1, Y]$ off of an event with probability $o(1)$, as $\varepsilon$ tends to $0$. To that end, it suffices to establish lemma. In the below, for any vertex $v \in [1, X] \times [1, Y]$ and path ensemble $\mathcal{G} \in \{ \mathcal{E}, \mathcal{F} \}$, we let $\big( i_1 (v; \mathcal{G}), h_1 (v; \mathcal{G}); i_2 (v; \mathcal{G}), h_2 (v; \mathcal{G}) \big)$ denote the arrow configuration at $v$ under  $\mathcal{G}$.
	
	\begin{lem} 
		
		\label{ab} 
		
		The following two statements hold as $\varepsilon$ tends to $0$.
		
		\begin{enumerate} 
			\item Let $\mathscr{A}$ denote the event that there exist at most $\varepsilon^{-3/2} = o(\varepsilon^{-2})$ vertices $v \in [1, X] \times [1, Y]$ with $\big( i_1 (v; \mathcal{E}), h_1 (v; \mathcal{E}) \big) \ne (0, 0)$. Then, $\mathbb{P} [\mathscr{A}] = 1 - o(1)$.
			\item Let $\mathscr{B}$ denote the event that there does not exist any vertex $v \in [1, X] \times [1, Y]$ with arrow configuration satisfying $\max \big\{ i_1 (v), h_1 (v) \big\} \ge 2$. Then, $\mathbb{P} [\mathscr{B}] = 1 - o(1)$. 
		\end{enumerate}
		
	\end{lem}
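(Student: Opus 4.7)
The plan is to prove both statements by coupling $\mathcal{E}$ to a truncated auxiliary process $\widehat{\mathcal{E}}$ that automatically satisfies $\mathscr{B}$ by construction. I would define $\widehat{\mathcal{E}}$ by replacing each transition kernel $\Psi_A(i_1, h_1; \cdot, \cdot)$ with its restriction to outputs having $\max\{i_2, h_2\} \leq 1$, renormalized to a probability measure. By construction, no vertex in $\widehat{\mathcal{E}}$ ever has $\max\{i_1, h_1\} \geq 2$, so $\mathscr{B}$ holds identically there. From \Cref{psia0}, the probability mass discarded in this truncation---equivalently, the total variation distance between $\Psi_A(i_1, h_1; \cdot, \cdot)$ and its truncation---is $O(\varepsilon^4)$ when $(i_1, h_1) = (0, 0)$ and $O(\varepsilon^2)$ when $(i_1, h_1) \in \{(1,0), (0,1), (1,1)\}$.

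The first main step is to couple $\mathcal{E}$ and $\widehat{\mathcal{E}}$ vertex by vertex in lexicographic order (say by $x+y$, then $x$), using the optimal coupling of $\Psi_A(i_1, h_1; \cdot, \cdot)$ with its truncation whenever the current inputs agree in the two models. Before the first disagreement the two processes are identical, so the marginal law of the input at each vertex coincides with that under $\widehat{\mathcal{E}}$. A standard union bound then yields
\[ \mathbb{P}[\mathcal{E} \neq \widehat{\mathcal{E}}] \leq XY \cdot O(\varepsilon^4) + \mathbb{E}_{\widehat{\mathcal{E}}}[\text{number of active vertices}] \cdot O(\varepsilon^2). \]

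The second main step is to estimate the expected number of active vertices in $\widehat{\mathcal{E}}$. Its dynamics coincide, up to $O(\varepsilon^2)$ perturbations, with those of the idealized $t$-PNG process on the discrete grid: a nucleation occurs at each $(0,0)$-input vertex with probability $(\theta\varepsilon)^2 + O(\varepsilon^4)$, producing a horizontal and a vertical ray that travel in straight lines (perhaps passing through or annihilating other rays) for at most $X + Y = O(\varepsilon^{-1})$ steps each. Hence the expected number of nucleations is $XY \cdot (\theta\varepsilon)^2 = O(1)$, and linearity of expectation gives $\mathbb{E}_{\widehat{\mathcal{E}}}[\text{number of active vertices}] = O(\varepsilon^{-1})$. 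Substituting, $\mathbb{P}[\mathcal{E} \neq \widehat{\mathcal{E}}] = O(\varepsilon)$, while Markov's inequality yields $\mathbb{P}_{\widehat{\mathcal{E}}}[\text{active} > \varepsilon^{-3/2}] = O(\varepsilon^{1/2})$. When the coupling succeeds, $\mathcal{E} = \widehat{\mathcal{E}}$ satisfies $\mathscr{B}$ trivially and $\mathscr{A}$ with probability at least $1 - O(\varepsilon^{1/2})$, yielding both statements with probability $1 - o(1)$.

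The main obstacle is breaking an apparent circularity: bounding the expected active count in $\mathcal{E}$ directly would seem to require knowing $\mathscr{B}$ a priori, since \Cref{psia0} only controls transitions whose inputs satisfy $\max\{i_1, h_1\} \leq 1$. The auxiliary process $\widehat{\mathcal{E}}$ sidesteps this by hard-coding $\mathscr{B}$ into the dynamics, at the modest cost of an $O(\varepsilon)$ coupling error that is then transferred back to $\mathcal{E}$.
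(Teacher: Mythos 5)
Your proof is correct and takes a genuinely different route from the paper. The paper runs an induction along anti-diagonals: it introduces the restricted events $\mathscr{B}_D$ and the active-vertex counts $V_D$ on the anti-diagonal $x+y=D$, proves the one-step estimates of \Cref{bv} (namely $\mathbb{E}[\mathbf{1}_{\mathscr{B}_D}|V_{D+1}-V_D|]\le CD\varepsilon^2$ and $\mathbb{P}[\mathscr{B}_{D+1}]\ge\mathbb{P}[\mathscr{B}_D]-C(\varepsilon^2\mathbb{E}[\mathbf{1}_{\mathscr{B}_D}V_D]+D\varepsilon^4)$), and iterates to obtain $\mathbb{E}[\mathbf{1}_{\mathscr{B}_D}V_D]=O(D^2\varepsilon^2)$ and $\mathbb{P}[\mathscr{B}_D]\ge 1-O(D^3\varepsilon^4)$, from which $\mathscr{B}$ and (via Markov) $\mathscr{A}$ follow at $D=X+Y$. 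You instead couple $\mathcal{E}$ with an explicitly truncated process $\widehat{\mathcal{E}}$ satisfying $\mathscr{B}$ by construction, and bound $\mathbb{E}_{\widehat{\mathcal{E}}}[\text{active}]=O(\varepsilon^{-1})$ by a direct combinatorial observation: in $\widehat{\mathcal{E}}$ every active vertex lies on a ray emanating from some nucleation event, the expected number of nucleations is $O(XY\varepsilon^2)=O(1)$, and each ray has deterministic length $O(X+Y)=O(\varepsilon^{-1})$. Both proofs hinge on circumventing the same circularity (the $\Psi_A$ estimates of \Cref{psia0} only control transitions with $\max\{i_1,h_1\}\le 1$, so one needs $\mathscr{B}$ to control the active-vertex count, and vice versa): the paper by restricting to $\mathscr{B}_D$ and inducting in $D$, you by baking $\mathscr{B}$ into the auxiliary dynamics. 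The coupling inequality $\mathbb{P}[\mathcal{E}\neq\widehat{\mathcal{E}}]\le XY\cdot O(\varepsilon^4)+O(\varepsilon^2)\cdot\mathbb{E}_{\widehat{\mathcal{E}}}[\text{active}]$ and the identification of the total-variation distance with the discarded mass are both sound, and the resulting rates ($\mathbb{P}[\mathscr{B}^c]=O(\varepsilon)$, $\mathbb{P}[\mathscr{A}^c]=O(\varepsilon^{1/2})$) match those of the paper. Your ray-counting bound on the active-vertex count is arguably the more transparent mechanism; the paper's approach stays internal to $\mathcal{E}$ and avoids introducing a second process.
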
 
	
	\begin{proof}[Proof of \Cref{modelconverge} Assuming \Cref{ab}]
		For any $v \in [1, X] \times [1, Y]$ such that we have $\big( i_1 (v; \mathcal{E}), h_1 (v; \mathcal{E}) \big) = \big( i_1 (v; \mathcal{F}), h_1 (v; \mathcal{F}) \big) \in \big\{ (0, 0), (1, 0), (0, 1), (1, 1) \big\}$, \Cref{psia0} and \eqref{probabilityf} together yield a coupling between $\mathcal{E}$ and $\mathcal{F}$ so that $\big( i_2 (v; \mathcal{E}), h_2 (v; \mathcal{E}) \big) = \big( i_2 (v; \mathcal{F}), h_2 (v; \mathcal{F}) \big)$ with probability at least $1 - \mathcal{O} (\varepsilon^2)$. By the first part of \Cref{psia0} and the first two statements of \eqref{probabilityf}, this coupling probability is improved to $1 - \mathcal{O} (\varepsilon^4)$ if $\big( i_1 (v; \mathcal{E}), h_1 (v; \mathcal{E}) \big) = (0, 0) = \big( i_1 (v; \mathcal{F}), h_1 (v; \mathcal{F}) \big)$. 
		
		Since the empty boundary conditions for $\mathcal{E}$ and $\mathcal{F}$ coincide, we may apply a union bound to couple $\mathcal{E}$ and $\mathcal{F}$ with probability at least $1 - (V + 1) \mathcal{O} (\varepsilon^2)$, where $V$ denotes the number of vertices $v \in [1, X] \times [1, Y]$ such that $i_1 (v; \mathcal{E}) + h_1 (v; \mathcal{E}) \ge 1$. Restricting to the event $\mathscr{A} \cap \mathscr{B}$ from \Cref{ab}, we have $V = o(\varepsilon^{-2})$, meaning that we may couple $\mathscr{E}$ and $\mathscr{F}$ to coincide on $[1, X] \times [1, Y]$ with probability at least $\mathbb{P} [\mathscr{A} \cap \mathscr{B}] - o(1) = 1 - o(1)$. As mentioned above, this gives the proposition.
	\end{proof} 
	
	To verify the bounds $\mathbb{P} [\mathscr{A}] = 1 - o(1)$ and $\mathbb{P} [\mathscr{B}] = 1 - o(1)$, for any integer $D \ge 0$ we define the set $\mathcal{V}_d \subset \mathbb{Z}_{> 0}^2$, the integer $V_D$, and event $\mathscr{B}_d$ by 
	\begin{flalign*}
		\mathcal{V}_D & = \big\{ v = (x, y) \in \mathbb{Z}_{> 0}^2: x + y = D, i_1 (v; \mathcal{E}) + h_1 (v; \mathcal{E}) \ge 1 \big\}; \qquad V_D = |\mathcal{V}_D|; \\
		\mathscr{B}_D & = \bigcap_{d = 0}^D \bigcap_{v \in \mathcal{V}_d} \Big\{ \max \big\{ i_1 (v; \mathcal{E}), h_1 (v; \mathcal{E}) \big\} \le 1 \Big\}.
	\end{flalign*}
	
	The following lemma provides inductive estimates on $V_D$ and on the probability of $\mathscr{B}_D$. 
	
	\begin{lem} 
		
		\label{bv} 
		
		There exists a constant $C = C (t, \theta) > 1$ such that
		\begin{flalign}
			\label{bdvd} 
			\mathbb{E} \big[ \textbf{\emph{1}}_{\mathscr{B}_D} | V_{D + 1} - V_D| \big] \le C D \varepsilon^2; \qquad \mathbb{P} [\mathscr{B}_{D + 1}] \ge \mathbb{P} [\mathscr{B}_D] - C \Big( \varepsilon^2 \ \mathbb{E} \big[ \textbf{\emph{1}}_{\mathscr{B}_D} V_D  \big] + D \varepsilon^4 \big).
		\end{flalign}
		
	\end{lem}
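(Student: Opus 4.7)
The plan is to exploit the asymptotic expansions in \Cref{psia0}: on $\mathscr{B}_D$, every vertex $v$ on diagonal $D$ has $(i_1(v),h_1(v)) \in \{(0,0),(1,0),(0,1),(1,1)\}$, so all four parts of \Cref{psia0} apply. I will call the six leading-order transitions displayed in \eqref{psia00}--\eqref{psia11} \emph{typical} and all others \emph{rare}; by \Cref{psia0}, the rare-transition probability at a single vertex is at most $C\varepsilon^4$ when the input is $(0,0)$ and at most $C\varepsilon^2$ when the input is nonempty. For the second inequality of \eqref{bdvd}, observe that $\mathscr{B}_{D+1}$ can fail on $\mathscr{B}_D$ only when some vertex on diagonal $D$ outputs $(i_2, h_2)$ with $\max(i_2, h_2) \ge 2$, which forces a rare transition there. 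A union bound splitting the at-most-$D$ empty vertices from the $V_D$ nonempty ones on diagonal $D$ then gives exactly the upper bound $C\bigl(D\varepsilon^4 + \varepsilon^2\, \mathbb{E}[\mathbf{1}_{\mathscr{B}_D} V_D]\bigr)$ after taking expectations.

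For the first inequality, I plan a direct arrow-conservation argument. In the absence of rare transitions on diagonal $D$, inclusion-exclusion on diagonal $D+1$ (a vertex there is nonempty iff it receives an arrow from below or from the left) together with the facts that each nucleation creates two arrows and each annihilation destroys two, yields the identity
\[
V_{D+1} - V_D \;=\; 2(N_D - K_D) \;+\; (M_{D-1} - M_D),
\]
where $N_D$ and $K_D$ are the numbers of transitions $(0,0;1,1)$ and $(1,1;0,0)$ on diagonal $D$, and $M_d$ counts vertices with input $(i_1,h_1)=(1,1)$ on diagonal $d+1$. Hence $\mathbf{1}_{\mathscr{B}_D}\,|V_{D+1} - V_D| \le 2(N_D + K_D) + (M_{D-1} + M_D)$ up to a rare-event correction controlled exactly as in the previous paragraph. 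Since at most $D$ vertices on diagonal $D$ can nucleate and each does so with probability $(\varepsilon\theta)^2$, one gets $\mathbb{E}[\mathbf{1}_{\mathscr{B}_D} N_D] \le CD\varepsilon^2$, and deterministically $K_D \le M_{D-1}$.

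The main obstacle is estimating $\mathbb{E}[\mathbf{1}_{\mathscr{B}_D}(M_{D-1} + M_D)]$. My plan is to trace each of the two arrows entering a $(1,1)$-vertex back to its creating nucleation: in the typical regime, up-arrows propagate straight up their columns and right-arrows straight right along their rows, so a $(1,1)$ input at $w = (x,y)$ requires a nucleation at some $(x, y_0)$ with $y_0 < y$ and another at some $(x_0, y)$ with $x_0 < x$. The two candidate-ancestor sets (a column-segment and a row-segment) are disjoint subsets of the lattice; each nucleation has marginal probability at most $C\varepsilon^2$, so the joint probability that both occur is at most $C\varepsilon^4$. Summing over the $O(D)$ positions $w$ on diagonal $D$ and the $O(D^2)$ pairs of candidate ancestors yields $\mathbb{E}[\mathbf{1}_{\mathscr{B}_D} M_{D-1}] \le CD^3\varepsilon^4$, which is dominated by $CD\varepsilon^2$ in the regime $D \le C\varepsilon^{-1}$ we are working in. A crude bound $\mathbb{E}[V_D] = O(1)$ (for instance, $V_D$ is at most twice the total number of nucleations so far, whose mean is $O(1)$) ensures that the rare-event correction is itself $O(D\varepsilon^2)$; assembling these pieces completes the first inequality.
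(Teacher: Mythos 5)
Your proof of the second inequality of \eqref{bdvd} coincides with the paper's exactly: $\mathscr{B}_{D+1}$ can fail on $\mathscr{B}_D$ only via a vertex on diagonal $D$ emitting $\geq 2$ arrows, a probability-$O(\varepsilon^4)$ event for the at-most-$D$ empty sites and a probability-$O(\varepsilon^2)$ event for the $V_D$ nonempty ones, and a union bound finishes. For the first inequality, however, your route differs substantially from the paper's. The paper simply asserts that, on $\mathscr{B}_D$, the increment $V_{D+1}-V_D$ is bounded above by the number of vertices on diagonal $D$ with configuration $(0,0;1,1)$ or with $\max\{i_2,h_2\}\ge 2$, then applies a per-vertex union bound over the at most $D$ sites. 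Your argument instead derives the exact arrow-conservation identity $V_{D+1}-V_D = 2(N_D-K_D)+(M_{D-1}-M_D)$ and bounds each term separately. What this buys you is explicit control of the pass-through vertices $(1,1;1,1)$ via the $M$-terms: given a $(1,1)$ input, a pass-through occurs with conditional probability close to $t$, not $O(\varepsilon^2)$, and each pass-through can raise $V_{D+1}-V_D$ by one, so these configurations are not captured by a naive per-vertex union bound; your accounting is accordingly more careful than the paper's terse statement. The cost is that your argument needs two ingredients the lemma's statement does not advertise but which hold wherever the lemma is applied: the working regime $D\lesssim\varepsilon^{-1}$, so that $D^3\varepsilon^4 \le D\varepsilon^2$; and the a priori bound $\mathbb{E}[\mathbf{1}_{\mathscr{B}_D}V_D]=O(1)$, which you correctly extract from the nucleation count and which is needed to absorb the rare-event correction (when a vertex on diagonal $D$ emits $\ge 2$ arrows, the identity breaks and one must fall back to the crude estimate $|V_{D+1}-V_D|\le D+1$, multiplied by the rare-event probability estimated exactly as in the second inequality). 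With these two points made explicit, your proposal is correct and, if anything, more robust than the sketch the paper gives.
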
 
	
	\begin{proof} 
		
		To verify the first statement of \eqref{bdvd}, observe on the event $\mathscr{B}_D$ that the quantity $V_{D + 1} - V_D = |\mathcal{V}_{D + 1}| - |\mathcal{V}_D|$ is bounded from above by the number of vertices $v = (x, y) \in \mathbb{Z}_{> 0}$ with $x + y = D$ such that we either have $\big( i_1 (v; \mathcal{E}), h_1 (v; \mathcal{E}); i_2 (v; \mathcal{E}), h_2 (v; \mathcal{E}) \big) = (0, 0; 1, 1)$ or $\max \big\{ i_2 (v; \mathcal{E}), h_2 (v; \mathcal{E}) \big\} \ge 2$. By \Cref{psia0}, there exists a constant $C > 1$ such that the probability of any $v$ satisfying either event is at most $C \varepsilon^2$. Applying a union bound over all $v = (x, y)$ with $x + y = D$ yields the first statement of \eqref{bdvd}.
		
		To verify the second, we again restrict to the event $\mathscr{B}_D$. Observe that the event $\mathscr{B}_{D + 1}$ does not hold only if there exists some vertex $v \in \mathbb{Z}_{> 0}^2$ with $x + y = D$ such that $\max \big\{ i_1 (v; \mathcal{E}), h_1 (v; \mathcal{E}) \big\} \le 1$ and $\max \big\{ i_2 (v; \mathcal{E}); h_2 (v; \mathcal{E}) \big\} \ge 2$. By \Cref{psia0}, there exist a constant $C > 1$ such that the probability of any $v$ satisfying this event is at most $C \varepsilon^2$. Moreover, by the first statement of \Cref{psia0}, this probability at most $C \varepsilon^4$ if $\big( i_1 (v), h_1 (v) \big) = (0, 0)$. Applying a union bound over at most $D$ vertices satisfying the latter statement and at most $V_D$ remaining ones then yields the second statement of \eqref{bdvd}. 
	\end{proof} 
	
	Now we can establish \Cref{ab}.
	
	\begin{proof}[Proof of \Cref{ab}]
		
		Let us use \eqref{bdvd} to bound $\mathbb{P} [\mathscr{A}]$ and $\mathbb{P} [\mathscr{B}]$. To that end, observe for any integer $D \ge 1$ that 
		\begin{flalign}
			\label{vd} 
			\mathbb{E} \big[ \textbf{1}_{\mathscr{B}_D} V_D \big] = \displaystyle\sum_{d = 0}^{D - 1} \mathbb{E} \big[ \textbf{1}_{\mathscr{B}_{d + 1}} V_{d + 1} - \textbf{1}_{\mathscr{B}_d} V_d \big] \le \displaystyle\sum_{d = 0}^{D - 1} \mathbb{E} \big[ \textbf{1}_{\mathscr{B}_d} (V_{d + 1} - V_d) \big] \le C D^2 \varepsilon^2,
		\end{flalign}
		
		\noindent where to deduce the second bound we used the fact that $\mathscr{B}_{d + 1} \subseteq \mathscr{B}_d$ and $V_{d + 1} \ge 0$, and to deduce the third we used the first statement of \eqref{bdvd}. By \eqref{vd} and the second statement of \eqref{bdvd}, we obtain
		\begin{flalign}
			\label{bd} 
			\begin{aligned} 
				1 - \mathbb{P} [\mathscr{B}_D] = \mathbb{P} [\mathscr{B}_0] - \mathbb{P} [\mathscr{B}_D] = \displaystyle\sum_{d = 0}^{D - 1} \big( \mathbb{P} [\mathscr{B}_d] - \mathbb{P} [\mathscr{B}_{d + 1}] \big) & \le C \displaystyle\sum_{d = 0}^{D - 1} \Big( \varepsilon^2 \mathbb{E} \big[ \textbf{1}_{\mathscr{B}_d} V_d \big] + D \varepsilon^4 \Big) \\
				& \le C \varepsilon^4 (D^3 + D^2) \le 2 C \varepsilon^4 D^3.
			\end{aligned} 
		\end{flalign} 
		
		\noindent By taking $D = X + Y = \mathcal{O} (\varepsilon^{-1})$, it follows from \eqref{bd} that 
		\begin{flalign}
			\label{b} 
			\mathbb{P} [\mathscr{B}] \ge \mathbb{P} [\mathscr{B}_{X + Y}] \ge 1 - 2 C \varepsilon^4 (X + Y)^3 = 1 - \mathcal{O} (\varepsilon) = 1 - o(1).
		\end{flalign} 
		
		Moreover, denoting the complement of any event $E$ by $E^c$, we have 
		\begin{flalign}
			\label{a} 
			\begin{aligned} 
				\mathbb{P} [\mathscr{A}^c] \le \mathbb{P} \Bigg[ \displaystyle\sum_{D = 0}^{X + Y - 1} V_D \ge \varepsilon^{-3/2} \Bigg] & \le \mathbb{P}\Bigg[ \bigg\{ \displaystyle\sum_{D = 0}^{X + Y} V_D \ge \varepsilon^{-3/2} \bigg\} \cap \mathscr{B}_{X + Y} \Bigg] + \mathbb{P} [\mathscr{B}_{X + Y}^c] \\
				& \le \varepsilon^{3/2} \mathbb{E} \Bigg[ \displaystyle\sum_{D = 0}^{X + Y} \textbf{1}_{\mathscr{B}_D} V_D \Bigg] + \mathbb{P} [\mathscr{B}_{X + Y}^c] \\
				& = \varepsilon^{3/2} \displaystyle\sum_{D = 0}^{X + Y} \mathbb{E} \big[ \textbf{1}_{\mathscr{B}_D} V_D \big] + \mathcal{O} (\varepsilon) \\
				& \le C (X + Y)^3 \varepsilon^{7/2} + \mathcal{O} (\varepsilon) = \mathcal{O} (\varepsilon^{1/2}) = o(1).
			\end{aligned} 
		\end{flalign} 
		
		\noindent Here, to deduce the first bound we used the definitions of $\mathscr{A}$, $\mathcal{V}_D$, and $V_D$; to deduce the second we applied a union bound; to deduce the third, we used with the fact that $\mathscr{B}_{X + Y} \subseteq \mathscr{B}_D$ for $D \le X + Y$, together with a Markov estimate; to deduce the fourth we applied \eqref{b}; to deduce the fifth  we applied \eqref{vd}; and to deduce the sixth we used the fact that $X + Y = \mathcal{O} (\varepsilon^{-1})$.
		
		Since \eqref{a} and \eqref{b} imply $\mathbb{P} [\mathscr{A}] = 1 - o(1)$ and $\mathbb{P} [\mathscr{B}] = 1 - o(1)$, they yield the proposition.
	\end{proof}

	\section{Matching Expectations}
	
	\label{EquationProof}

	In this appendix we provide an alternative, direct proof of Proposition \ref{pvm} in the case of Schur measures $(q=t)$, when all parameters $r_i = s_i = t^{-1/2}$, and for $M=N$. The proof is carried out by noticing that a certain partition function \eqref{hybrid-pf} in the quadrant is equal to the expectation on the left hand side of \eqref{fvexpectation}, and that this partition function may be evaluated as an $N\times N$ determinant, borrowing a result from \cite{WZJ}. Performing the expansion of this determinant over the Schur basis via the Cauchy--Binet identity, we then obtain the right hand side of \eqref{fvexpectation}, with $q=t$.
	
	Extending this result to generic Macdonald measures, generic higher spin weights, and $M\not=N$, as in \eqref{fvexpectation}, is then straightforward. The passage to generic Macdonald measures is achieved by noting that the right hand side of \eqref{fvexpectation} is in fact independent of $q$, and therefore equal to the Schur expectation; this is an easy consequence of acting with Macdonald difference operators on the Macdonald Cauchy kernel. Passing to the general spin setting, with arbitrary $r_i$ and $s_i$, is achieved by performing fusion of the partition function \eqref{hybrid-pf}. Finally, the case $M\not=N$ may be accessed by certain reductions of the match \eqref{final-match}, as we briefly mention in Section \ref{ssec:final}.
	
	%
	%

	\subsection{Reduction to $t$-Boson Model}
	
	Fix integers $j_1,j_2 \in \{0,1\}$ and $i_1,i_2 \in \mathbb{Z}_{\geq 0}$. We define
	\begin{align}
		\label{L-reduce}
		\lim_{s \rightarrow 0}
		L_{x}(i_1,j_1;i_2,j_2 \boldsymbol{\mid} t^{-1/2}, s)
		(-s)^{-j_2}
		=
		\mathcal{L}_x(i_1,j_1;i_2,j_2).
	\end{align}
	We denote these weights graphically by
	\begin{align}
		\label{L-vert}
		\mathcal{L}_x(i_1,j_1;i_2,j_2)
		=
		\tikz{0.7}{
			\node[left,circle,scale=.75,draw]  at (-1.75,0) {$x$};
			\draw[lgray,line width=1.5pt,->] (-1,0) -- (1,0);
			\draw[lgray,line width=4pt,->] (0,-1) -- (0,1);
			\node[left] at (-1,0) {\tiny $j_1$};\node[right] at (1,0) {\tiny $j_2$};
			\node[below] at (0,-1) {\tiny $i_1$};\node[above] at (0,1) {\tiny $i_2$};
		},
		\qquad
		j_1,j_2 \in \{0,1\},
		\qquad
		i_1,i_2 \in \mathbb{Z}_{\geq 0}.
	\end{align}
	The vertex \eqref{L-vert} vanishes unless $i_1+j_1 = i_2+j_2$; when this constraint is met, we obtain the following table of nonzero weights:
	\begin{align}
		\label{rank1-weights}
		\begin{tabular}{|c|c|c|c|}
			\hline
			\quad
			\tikz{0.7}{
				\draw[lgray,line width=1.5pt,->] (-1,0) -- (1,0);
				\draw[lgray,line width=4pt,->] (0,-1) -- (0,1);
				\node[left] at (-1,0) {\tiny $0$};\node[right] at (1,0) {\tiny $0$};
				\node[below] at (0,-1) {\tiny $i$};\node[above] at (0,1) {\tiny $i$};
			}
			\quad
			&
			\quad
			\tikz{0.7}{
				\draw[lgray,line width=1.5pt,->] (-1,0) -- (1,0);
				\draw[lgray,line width=4pt,->] (0,-1) -- (0,1);
				\node[left] at (-1,0) {\tiny $0$};\node[right] at (1,0) {\tiny $1$};
				\node[below] at (0,-1) {\tiny $i$};\node[above] at (0,1) {\tiny $i-1$};
			}
			\quad
			&
			\quad
			\tikz{0.7}{
				\draw[lgray,line width=1.5pt,->] (-1,0) -- (1,0);
				\draw[lgray,line width=4pt,->] (0,-1) -- (0,1);
				\node[left] at (-1,0) {\tiny $1$};\node[right] at (1,0) {\tiny $0$};
				\node[below] at (0,-1) {\tiny $i$};\node[above] at (0,1) {\tiny $i+1$};
			}
			\quad
			&
			\quad
			\tikz{0.7}{
				\draw[lgray,line width=1.5pt,->] (-1,0) -- (1,0);
				\draw[lgray,line width=4pt,->] (0,-1) -- (0,1);
				\node[left] at (-1,0) {\tiny $1$};\node[right] at (1,0) {\tiny $1$};
				\node[below] at (0,-1) {\tiny $i$};\node[above] at (0,1) {\tiny $i$};
			}
			\\[1.3cm]
			\quad
			$1$
			\quad
			& 
			\quad
			$x(1-t^i)$
			\quad
			&
			\quad
			$1$
			\quad
			& 
			\quad
			$x$
			\quad
			\\[0.7cm]
			\hline
		\end{tabular} 
	\end{align}

	\subsection{Reduction to Stochastic Six-Vertex Model}
	
	Fix integers $i_1,i_2,j_1,j_2 \in \{0,1\}$. We define
	\begin{align}
		\label{R-reduce}
		L_{t^{-1/2}\cdot x/y}(i_1,j_1;i_2,j_2 \boldsymbol{\mid} t^{-1/2}, t^{-1/2})
		=
		\mathcal{R}_{y/x}(i_1,j_1;i_2,j_2).
	\end{align}
	We denote these weights graphically by
	\begin{align}
		\label{R-vert}
		\mathcal{R}_{y/x}(i_1,j_1;i_2,j_2)
		=
		\tikz{0.6}{
			\node[left,circle,scale=0.75,draw]  at (-1.75,0) {$x$};
			\node[below,circle,scale=0.75,draw] at (0,-1.75) {$y$};
			\draw[lgray,line width=1.5pt,->] (-1,0) -- (1,0);
			\draw[lgray,line width=1.5pt,->] (0,-1) -- (0,1);
			\node[left] at (-1,0) {\tiny $j$};\node[right] at (1,0) {\tiny $\ell$};
			\node[below] at (0,-1) {\tiny $i$};\node[above] at (0,1) {\tiny $k$};
		},
		\qquad
		i_1,i_2,j_1,j_2 \in \{0,1\}.
	\end{align}
	The vertex \eqref{R-vert} vanishes unless $i_1+j_1 = i_2+j_2$; when this constraint is met, we obtain the following table of nonzero weights:
	\begin{align}
		\label{six-vert}
		\begin{tabular}{|c|c|c|}
			\hline
			\quad
			\tikz{0.6}{
				\draw[lgray,line width=1.5pt,->] (-1,0) -- (1,0);
				\draw[lgray,line width=1.5pt,->] (0,-1) -- (0,1);
				\node[left] at (-1,0) {\tiny $0$};\node[right] at (1,0) {\tiny $0$};
				\node[below] at (0,-1) {\tiny $0$};\node[above] at (0,1) {\tiny $0$};
			}
			\quad
			&
			\quad
			\tikz{0.6}{
				\draw[lgray,line width=1.5pt,->] (-1,0) -- (1,0);
				\draw[lgray,line width=1.5pt,->] (0,-1) -- (0,1);
				\node[left] at (-1,0) {\tiny $0$};\node[right] at (1,0) {\tiny $0$};
				\node[below] at (0,-1) {\tiny $1$};\node[above] at (0,1) {\tiny $1$};
			}
			\quad
			&
			\quad
			\tikz{0.6}{
				\draw[lgray,line width=1.5pt,->] (-1,0) -- (1,0);
				\draw[lgray,line width=1.5pt,->] (0,-1) -- (0,1);
				\node[left] at (-1,0) {\tiny $0$};\node[right] at (1,0) {\tiny $1$};
				\node[below] at (0,-1) {\tiny $1$};\node[above] at (0,1) {\tiny $0$};
			}
			\quad
			\\[1.3cm]
			\quad
			$1$
			\quad
			& 
			\quad
			$\dfrac{t(1-y/x)}{1-ty/x}$
			\quad
			& 
			\quad
			$\dfrac{1-t}{1-ty/x}$
			\quad
			\\[0.7cm]
			\hline
			\quad
			\tikz{0.6}{
				\draw[lgray,line width=1.5pt,->] (-1,0) -- (1,0);
				\draw[lgray,line width=1.5pt,->] (0,-1) -- (0,1);
				\node[left] at (-1,0) {\tiny $1$};\node[right] at (1,0) {\tiny $1$};
				\node[below] at (0,-1) {\tiny $1$};\node[above] at (0,1) {\tiny $1$};
			}
			\quad
			&
			\quad
			\tikz{0.6}{
				\draw[lgray,line width=1.5pt,->] (-1,0) -- (1,0);
				\draw[lgray,line width=1.5pt,->] (0,-1) -- (0,1);
				\node[left] at (-1,0) {\tiny $1$};\node[right] at (1,0) {\tiny $1$};
				\node[below] at (0,-1) {\tiny $0$};\node[above] at (0,1) {\tiny $0$};
			}
			\quad
			&
			\quad
			\tikz{0.6}{
				\draw[lgray,line width=1.5pt,->] (-1,0) -- (1,0);
				\draw[lgray,line width=1.5pt,->] (0,-1) -- (0,1);
				\node[left] at (-1,0) {\tiny $1$};\node[right] at (1,0) {\tiny $0$};
				\node[below] at (0,-1) {\tiny $0$};\node[above] at (0,1) {\tiny $1$};
			}
			\quad
			\\[1.3cm]
			\quad
			$1$
			\quad
			& 
			\quad
			$\dfrac{1-y/x}{1-ty/x}$
			\quad
			&
			\quad
			$\dfrac{(1-t)y/x}{1-ty/x}$
			\quad 
			\\[0.7cm]
			\hline
		\end{tabular}
	\end{align}

	\noindent Observe that these weights are stochastic, that is,
	\begin{flalign*}
		\displaystyle\sum_{i_2, j_2} \mathcal{R}_{y / x} (i_1, j_1; i_2, j_2) = 1.
	\end{flalign*}
	
	\subsection{Partition Function $Z(x_1,\dots,x_N;y_1,\dots,y_N;k)$}
	
	Fix two alphabets $(x_1,\dots,x_N) \in \mathbb{C}^N$ and $(y_1,\dots,y_N) \in \mathbb{C}^N$, and an integer $k \in \mathbb{Z}_{\geq 0}$. Define the following partition function in the quadrant:
	\begin{align}
		\label{hybrid-pf}
		Z\left(x_1,\dots,x_N;y_1,\dots,y_N;k\right)
		=
		\tikz{0.75}{
			\foreach\y in {1,...,5}{
				\draw[lgray,line width=1.5pt,->] (0,\y) -- (7,\y);
			}
			\foreach\x in {1,...,5}{
				\draw[lgray,line width=1.5pt,->] (\x,0) -- (\x,7);
			}
			\draw[lgray,line width=4pt,->] (6,0) -- (6,6) -- (0,6);
			\node[above] at (5,7) {$1$};
			\node[above] at (4,7) {$1$};
			\node[above] at (3,7) {$1$};
			\node[above] at (2,7) {$1$};
			\node[above] at (1,7) {$1$};
			\node[below] at (6,0) {$k$};
			\node[below] at (5,0) {$0$};
			\node[below] at (4,0) {$0$};
			\node[below] at (3,0) {$0$};
			\node[below] at (2,0) {$0$};
			\node[below] at (1,0) {$0$};
			\node[below,circle,scale=0.75,draw] at (5,-0.75) {$y_N$};
			\node[below] at (3.5,-0.75) {$\cdots$};
			\node[below] at (2.5,-0.75) {$\cdots$};
			\node[below,circle,scale=0.75,draw] at (1,-0.75) {$y_1$};
			\node[left] at (0,1) {$1$};
			\node[left] at (0,2) {$1$};
			\node[left] at (0,3) {$1$};
			\node[left] at (0,4) {$1$};
			\node[left] at (0,5) {$1$};
			\node[left] at (0,6) {$k$};
			\node[left,circle,scale=0.75,draw] at (-0.75,1) {$\overline{x}_1$};
			\node[left] at (-0.75,2.5) {$\vdots$};
			\node[left] at (-0.75,3.5) {$\vdots$};
			\node[left,circle,scale=0.75,draw] at (-0.75,5) {$\overline{x}_N$};
			\node[right] at (7,1) {$0$};
			\node[right] at (7,2) {$0$};
			\node[right] at (7,3) {$0$};
			\node[right] at (7,4) {$0$};
			\node[right] at (7,5) {$0$};
		}
	\end{align}
	Vertices in the topmost row and the rightmost column are of the form \eqref{L-vert}; all other vertices are given by \eqref{R-vert}. The variables associated to horizontal lines are reciprocated; namely, we set $\overline{x}_a = 1/x_a$, for all $1 \leq a \leq N$. Partition functions of the form \eqref{hybrid-pf} were originally studied in \cite{WZJ}, where they appeared in connection with Cauchy identities and as a one-parameter generalization of domain wall partition functions \cite{Izergin,Korepin}. 
	
	We now show that \eqref{hybrid-pf} admits a nice probabilistic interpretation. Fix parameters $t$, $(x_1,\dots,x_N)$ and $(y_1,\dots,y_N)$ such that each vertex \eqref{R-vert} within \eqref{hybrid-pf} has real weight in the interval $[0,1]$. We may then associate to any configuration $\mathfrak{C}$ in the (finite) quadrant $[1,N] \times [1,N]$ (that does not include the thick arrow along the northeast boundary in \eqref{hybrid-pf}) a probability weight $\mathbb{P}_{6{\rm v}}(\mathfrak{C})$, defined as the product of weights of all vertices within $\mathfrak{C}$.
	\begin{prop}
		Let $\mathfrak{h}(N,N)$ denote the height function assigned to the vertex $(N,N)$ within the stochastic six-vertex model. We have that
		\begin{align}
			\label{6v-E}
			Z(x_1,\dots,x_N;y_1,\dots,y_N;k)
			=
			\prod_{a=1}^N
			y_a
			\cdot
			\mathbb{E}_{6{\rm v}}
			\left[
			\frac{(t^{k+1};t)_{\infty}}{(t^{k+1+\mathfrak{h}(N,N)};t)_{\infty}}
			\right],
		\end{align}
		where the expectation is taken with respect to the measure $\mathbb{P}_{6{\rm v}}$ defined above.
	\end{prop}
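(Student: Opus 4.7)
\emph{Strategy.} I will prove \eqref{6v-E} by decomposing $Z$ as a sum over arrow configurations and isolating the contribution of the topmost row and rightmost column as a factor depending only on the configuration $\mathfrak{C}$ of the $N \times N$ bulk sub-block (the region of $\mathcal{R}$-vertices). Because the bulk $\mathcal{R}$-weights are stochastic with the step boundary data on the lower and left sides, the bulk sum is exactly $\mathbb{P}_{6{\rm v}}$. It then suffices to show that the combined boundary factor equals $\prod_{a=1}^N y_a \cdot (t^{k+1};t)_{\mathfrak{h}(N,N)(\mathfrak{C})}$; \eqref{6v-E} then follows from the identity $(t^{k+1};t)_h = (t^{k+1};t)_\infty / (t^{k+1+h};t)_\infty$.

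\emph{Right column.} Every vertex in the rightmost column has horizontal exit $j_2 = 0$, since the right boundary of the figure is empty. Consulting \eqref{rank1-weights}, the only compatible configurations are $(i,0;i,0)$ and $(i,1;i+1,0)$, each of weight $1$, so the right column contributes a factor of $1$ regardless of the bulk configuration. Its sole operational role is to gather the $k$ arrows entering from below together with the $h := \mathfrak{h}(N,N)$ paths exiting the bulk horizontally (this is, by definition, the number of bulk paths crossing the right boundary at height at most $N$), and to deliver $k+h$ arrows into the top row from the right.

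\emph{Top row.} The top-row vertices carry $\mathcal{L}$-weights in the dual fusion regime, obtained from the $L_z$-weights of \Cref{lzsr} by letting the $j$-argument be the thick one and fixing $s = t^{-1/2}$; these are the transpose of \eqref{rank1-weights} under the interchange $i \leftrightarrow j$, with column spectral parameter $y_a$. Combined with the top-boundary constraint $i_2 = 1$, the only two local configurations at column $a$ are the pass-through $(1,j;1,j)$ of weight $y_a$, and the turn $(0,j;1,j-1)$ of weight $y_a(1-t^j)$. Tracking the horizontal count right-to-left through the top row, it begins at $k+h$, is unchanged at each pass-through, and drops by one at each of the $h$ turns (one per bulk column with no top-output), taking the successive values $k+h, k+h-1, \ldots, k+1$ at those turns. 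Multiplying the per-vertex weights therefore produces
\[
\prod_{a=1}^N y_a \cdot \prod_{j=1}^h (1 - t^{k+j}) = \prod_{a=1}^N y_a \cdot (t^{k+1};t)_h,
\]
which is the claimed boundary factor.

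\emph{Assembly and main obstacle.} Summing this identity against $\mathbb{P}_{6{\rm v}}$ over bulk configurations yields the expectation on the right-hand side of \eqref{6v-E}. The principal technical step is the identification of the dual top-row weights from the fused $L_z$-framework, in particular verifying that the spectral parameter at column $a$ reduces precisely to $y_a$ (and not some combination involving $\overline{x}_a$ as one might naively guess from the left-column conventions); everything else is bookkeeping. Once this fusion identification is in hand, arrow conservation forces the entire top-row configuration to be uniquely determined by the bulk, and the combinatorial structure of turns produces the $t$-Pochhammer factor with no further computation.
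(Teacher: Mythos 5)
Your proof is correct and follows essentially the same route as the paper's: decompose $Z$ along the edges separating the bulk six-vertex block from the thick boundary line, recognize the bulk contribution as $\mathbb{P}_{6\mathrm{v}}$, and evaluate the boundary contribution as the deterministic factor $\prod_a y_a\cdot (t^{k+1};t)_{\mathfrak{h}(N,N)}$. The paper straightens the thick line into the single tower $H_k$ of \eqref{tower} and computes it in one pass via \eqref{H-prod}, whereas you treat the right column and top row separately, but this is the same calculation in two pieces; and the subtlety you flag (that the top-row vertices carry spectral parameter $y_a$ rather than $\overline{x}_a$) is already built into the definition \eqref{L-reduce}--\eqref{L-vert}, which attaches the parameter to the thin line crossing the thick one, so once the thick line is straightened the vertices at the $y_a$ columns depend only on $y_a$.
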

	
	\begin{proof}
		We begin by decomposing the partition function \eqref{hybrid-pf} along the edges where vertices of the types \eqref{L-vert} and \eqref{R-vert} meet. This produces the equation
		\begin{multline}
			\label{Z-expand}
			Z(x_1,\dots,x_N;y_1,\dots,y_N;k)
			\\
			= \sum_{\{i_1,\dots,i_N\} \in \{0,1\}^N}
			\sum_{\{j_1,\dots,j_N\} \in \{0,1\}^N}
			\mathbb{P}_{6{\rm v}}(i_1,\dots,i_N;j_1,\dots,j_N)
			H_k(j_1,\dots,j_N;i_1,\dots,i_N),
		\end{multline}
		where we have defined two new partition functions. The first is given by
		\begin{align}
			\label{square}
			\mathbb{P}_{6{\rm v}}(i_1,\dots,i_N;j_1,\dots,j_N)
			=
			\tikz{0.8}{
				\foreach\y in {1,...,5}{
					\draw[lgray,line width=1.5pt,->] (0,\y) -- (6,\y);
				}
				\foreach\x in {1,...,5}{
					\draw[lgray,line width=1.5pt,->] (\x,0) -- (\x,6);
				}
				\node[above] at (5,6) {$i_N$};
				\node[above] at (4,6) {$\cdots$};
				\node[above] at (3,6) {$\cdots$};
				\node[above] at (2,6) {$\cdots$};
				\node[above] at (1,6) {$i_1$};
				\node[below] at (5,0) {$0$};
				\node[below] at (4,0) {$0$};
				\node[below] at (3,0) {$0$};
				\node[below] at (2,0) {$0$};
				\node[below] at (1,0) {$0$};
				\node[below,circle,scale=0.75,draw] at (5,-0.75) {$y_N$};
				\node[below] at (3.5,-0.75) {$\cdots$};
				\node[below] at (2.5,-0.75) {$\cdots$};
				\node[below,circle,scale=0.75,draw] at (1,-0.75) {$y_1$};
				\node[left] at (0,1) {$1$};
				\node[left] at (0,2) {$1$};
				\node[left] at (0,3) {$1$};
				\node[left] at (0,4) {$1$};
				\node[left] at (0,5) {$1$};
				\node[left,circle,scale=0.75,draw] at (-0.75,1) {$\overline{x}_1$};
				\node[left] at (-0.75,2.5) {$\vdots$};
				\node[left] at (-0.75,3.5) {$\vdots$};
				\node[left,circle,scale=0.75,draw] at (-0.75,5) {$\overline{x}_N$};
				\node[right] at (6,1) {$j_1$};
				\node[right] at (6,2) {$\vdots$};
				\node[right] at (6,3) {$\vdots$};
				\node[right] at (6,4) {$\vdots$};
				\node[right] at (6,5) {$j_N$};
			}
		\end{align}
		where all vertices are of the type \eqref{R-vert}; this quantity is the probability that a random configuration $\mathfrak{C}$ in the quadrant $[1,N] \times [1,N]$ has state $i_a \in \{0,1\}$ exiting vertically from vertex $(a,N)$ and state $j_a \in \{0,1\}$ exiting horizontally from vertex $(N,a)$, for all $1 \leq a \leq N$. The second is a tower of vertices of the type \eqref{L-vert} (which one may also view as a straightened version of the thick arrow along the northeast boundary in \eqref{hybrid-pf}):
		\begin{align}
			\label{tower}
			H_k(j_1,\dots,j_N;i_1,\dots,i_N)
			=
			\tikz{0.85}{
				\foreach\y in {1,...,6}{
					\draw[lgray,line width=1.5pt,->] (0,\y) -- (2,\y);
				}
				\draw[lgray,line width=4pt,->] (1,0) -- (1,7);
				\node[above] at (1,7) {$k$};
				\node[below] at (1,0) {$k$};
				\node[left] at (0,1) {$j_1$};
				\node[left] at (0,2) {$\vdots$};
				\node[left] at (0,3) {$j_N$};
				\node[left] at (0,4) {$i_N$};
				\node[left] at (0,5) {$\vdots$};
				\node[left] at (0,6) {$i_1$};
				\node[left,circle,scale=0.75,draw] at (-0.75,1) {$\overline{x}_1$};
				\node[left] at (-0.75,2) {$\vdots$};
				\node[left,circle,scale=0.75,draw] at (-0.75,3) {$\overline{x}_N$};
				\node[left,circle,scale=0.75,draw] at (-0.75,4) {$y_N$};
				\node[left] at (-0.75,5) {$\vdots$};
				\node[left,circle,scale=0.75,draw] at (-0.75,6) {$y_1$};
				\node[right] at (2,1) {$0$};
				\node[right] at (2,2) {$0$};
				\node[right] at (2,3) {$0$};
				\node[right] at (2,4) {$1$};
				\node[right] at (2,5) {$1$};
				\node[right] at (2,6) {$1$};
			}
		\end{align}
		In view of the arrow conservation property of the vertices \eqref{L-vert}, it is easy to see that each internal vertical edge within \eqref{tower} admits a unique state such that the tower has non-vanishing weight. This allows us to compute $H_k(j_1,\dots,j_N;i_1,\dots,i_N)$ explicitly:
		\begin{align}
			\label{H-prod}
			H_k(j_1,\dots,j_N;i_1,\dots,i_N)
			=
			\prod_{a=1}^{N}
			\mathcal{L}_{\overline{x}_a}(k+J_{a-1},j_a;k+J_a,0)
			\mathcal{L}_{y_a}(k+I_a,i_a;k+I_{a-1},1),
		\end{align}
		where we have defined the partial sums $J_0 = I_0 = 0$, $J_a = \sum_{b=1}^{a} j_b$, $I_a = a-\sum_{b=1}^{a} i_b$ for $1 \leq a \leq N$, and where we note that $J_N = I_N$. From the table \eqref{rank1-weights}, the weights 
		$\mathcal{L}_{\overline{x}_a}(k+J_{a-1},j_a;k+J_a,0)$ are all equal to $1$; the remaining terms in the product \eqref{H-prod} are given by
		\begin{align*}
			\mathcal{L}_{y_a}(k+I_a,i_a;k+I_{a-1},1)
			=
			y_a \cdot
			\left\{
			\begin{array}{ll}
				1-t^{k+I_a}, & \qquad i_a=0,
				\\
				\\
				1, & \qquad i_a =1.
			\end{array}
			\right.
		\end{align*}
		The product \eqref{H-prod} then simplifies to
		\begin{align*}
			H_k(j_1,\dots,j_N;i_1,\dots,i_N)
			=
			\prod_{a=1}^{N} y_a
			\cdot
			\prod_{b=1}^{I_N} (1-t^{k+b})
			=
			\prod_{a=1}^{N} y_a
			\cdot
			\prod_{b=1}^{J_N} (1-t^{k+b}).
		\end{align*}
		In particular, $H_k(j_1,\dots,j_N;i_1,\dots,i_N)$ depends on $(i_1,\dots,i_N)$ and $(j_1,\dots,j_N)$ only via $I_N = J_N = \sum_{b=1}^{N} j_b$. In fact, returning to the quadrant \eqref{square}, we see that 
		$J_N = \mathfrak{h}(N,N)$; accordingly, one has
		\begin{align*}
			H_k(j_1,\dots,j_N;i_1,\dots,i_N)
			&=
			\prod_{a=1}^{N}
			y_a
			\cdot
			\prod_{b=1}^{\mathfrak{h}(N,N)}
			(1-t^{k+b})
			\\
			&=
			\prod_{a=1}^{N}
			y_a
			\cdot
			\prod_{b=1}^{\infty}
			\frac{1-t^{k+b}}{1-t^{k+b+\mathfrak{h}(N,N)}}
			=
			\prod_{a=1}^{N}
			y_a
			\cdot
			\frac{(t^{k+1};t)_{\infty}}{(t^{k+1+\mathfrak{h}(N,N)};t)_{\infty}}.
		\end{align*}
		Coming back to \eqref{Z-expand}, we then find that 
		\begin{multline*}
			Z(x_1,\dots,x_N;y_1,\dots,y_N;k)
			\\
			=
			\prod_{a=1}^{N}
			y_a
			\cdot
			\sum_{\{i_1,\dots,i_N\} \in \{0,1\}^N}
			\sum_{\{j_1,\dots,j_N\} \in \{0,1\}^N}
			\mathbb{P}_{6{\rm v}}(i_1,\dots,i_N;j_1,\dots,j_N)
			\frac{(t^{k+1};t)_{\infty}}{(t^{k+1+\mathfrak{h}(N,N)};t)_{\infty}}.
		\end{multline*}
		Conditioning on the possible values of $\mathfrak{h}(N,N)$, this may be written as
		\begin{align*}
			Z(x_1,\dots,x_N;y_1,\dots,y_N;k)
			&=
			\prod_{a=1}^{N}
			y_a
			\cdot
			\sum_{m=0}^{N}
			\frac{(t^{k+1};t)_{\infty}}{(t^{k+1+m};t)_{\infty}}
			\mathbb{P}_{6{\rm v}}(\mathfrak{h}(N,N)=m),
		\end{align*}
		which proves the claim \eqref{6v-E}.
	\end{proof}
	
	\subsection{Determinant Evaluation}
	
	Following Section 4.2 and Appendix B of \cite{WZJ}, the partition function \eqref{hybrid-pf} may be computed in closed form:
	\begin{prop}
		For any $N \geq 1$ and $k \geq 0$, one has
		\begin{align}
			\label{Z-det}
			Z(x_1,\dots,x_N;y_1,\dots,y_N;k)
			=
			\frac{\prod_{i=1}^{N}y_i \prod_{1 \leq i,j \leq N} (1-x_i y_j)}
			{\prod_{1 \leq i<j \leq N} (x_i-x_j)(y_i-y_j)}
			\det_{1 \leq i,j \leq N}
			\left[
			\frac{1-t^{k+1}-t(1-t^k) x_i y_j}{(1-x_i y_j)(1-t x_i y_j)}
			\right].
		\end{align}
	\end{prop}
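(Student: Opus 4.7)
The plan is to characterize $Z(x_1,\dots,x_N;y_1,\dots,y_N;k)$ uniquely by a short list of algebraic properties and then verify that the determinantal right-hand side of \eqref{Z-det} satisfies the same properties, following the Izergin--Korepin scheme as adapted in \cite{WZJ}.

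First I would show that $Z$ is symmetric separately in the row alphabet $(x_1,\dots,x_N)$ and in the column alphabet $(y_1,\dots,y_N)$. The $x$-symmetry is obtained by conjugating a pair of adjacent rows by a Yang--Baxter $\mathcal{R}$-intertwiner and sliding it across the lattice. The one subtle step is to pass this intertwiner through the northeast corner, where a column of $t$-bosonic $\mathcal{L}$-vertices (the thick arrow in \eqref{hybrid-pf}) meets the rows; this is exactly the $RLL$ intertwining relation underlying the fused $U_t(\widehat{\mathfrak{sl}}_2)$ vertex models recalled in \Cref{Vertex}. The $y$-symmetry follows analogously by swapping adjacent columns. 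Next, I would read off the analytic structure of $Z$ in each variable: the only denominators arising from the $\mathcal{R}$-vertex weights in \eqref{six-vert} are factors of $1-tx_iy_j$, while the $\mathcal{L}$-vertices on the thick boundary contribute polynomially, so multiplying $Z$ through by $\prod_{i=1}^N(1-tx_iy_N)$ leaves a polynomial in $y_N$ of bounded degree; combined with the $y$-symmetry, this tightly constrains $Z$ as a symmetric rational function.

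Third, I would establish a recursion in $N$ by specializing $x_Ny_N=1$. At this point the two pass-through weights $\mathcal{R}_{x_Ny_N}(1,0;1,0)$ and $\mathcal{R}_{x_Ny_N}(0,1;0,1)$ from \eqref{six-vert} both vanish; this freezes the configuration in the $N$-th row and $N$-th column and yields an explicit proportionality between $Z_N \big|_{x_Ny_N=1}$ and $Z_{N-1}$ in the remaining variables. Together with the symmetries, this supplies enough specializations of $(x_N,y_N)$ to pin $Z$ down uniquely by Lagrange interpolation, once the $N=1$ base case is verified by a direct enumeration of the handful of allowed configurations. One then checks that the right side of \eqref{Z-det} satisfies all of the same properties: symmetry is immediate from the determinantal form, polynomiality follows once the explicit prefactor is cleared, and the recursion at $x_Ny_N=1$ follows because one row of the determinant can be collapsed into an $(N-1)\times(N-1)$ minor after appropriate row/column manipulations.

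I expect the main obstacle to be precisely the bookkeeping at the recursion step. The $t$-bosonic tower of $\mathcal{L}$-vertices on the thick northeast boundary contributes a $k$-dependent multiplicative factor with no counterpart in the classical Izergin--Korepin domain-wall partition function, and it is precisely this factor that produces the combination $1-t^{k+1}-t(1-t^k)x_iy_j$ in the numerator of the entries of the determinant on the right side of \eqref{Z-det}. Matching the $k$-dependence generated by the frozen corner tower to the $k$-dependence in the determinant through each step of the recursion is the content of Appendix B of \cite{WZJ}, and is what I would need to reproduce most carefully.
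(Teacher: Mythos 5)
Your overall strategy is the paper's strategy: characterize $Z$ uniquely by a short list of algebraic properties in the Izergin--Korepin style (symmetry, polynomiality after clearing denominators, recursion at $x_Ny_N=1$, base case $N=1$) and then verify that the determinantal right side of \eqref{Z-det} satisfies the same list, following \cite[Lemma 5, Appendix B]{WZJ}. The symmetry, polynomiality, recursion, and $N=1$ evaluation you describe correspond exactly to properties (1)--(3) and (5) in the paper's outline.

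There is, however, a genuine gap in the uniqueness step as you have stated it. After multiplying $Z$ by $\prod_{1\le i,j\le N}(1-tx_iy_j)$, the result is a polynomial of degree $N$ in $x_N$ (one can already see degree $1$ at $N=1$ from property (5)), hence it has $N+1$ unknown coefficients. The recursion at $x_Ny_N=1$, spread over all $j$ by $y$-symmetry, yields exactly the $N$ evaluations $x_N=1/y_j$. That is one short of determining a degree-$N$ polynomial by Lagrange interpolation, so "the symmetries plus the recursion" do not by themselves pin $Z$ down. This is precisely where the $t$-bosonic tower on the northeast boundary raises the degree by one relative to the classical domain-wall partition function, and it is why the paper lists a fourth property: the normalization
\begin{align*}
	Z(0,\dots,0;y_1,\dots,y_N;k) \;=\; \prod_{b=1}^{N}\bigl(1-t^{k+b}\bigr),
\end{align*}
obtained by freezing the entire quadrant when all $x_i=0$ and reading off the tower's weight. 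Adding this one evaluation to your list closes the gap; without some such extra condition (an evaluation at $x_N=0$, or equivalently control of a leading or constant coefficient), the inductive step from $Z_{N-1}$ to $Z_N$ does not go through.
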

	
	\begin{proof}[Proof (Outline)]
		
		The proof relies on finding a list of properties of $Z(x_1,\dots,x_N;y_1,\dots,y_N;k)$ that determine it uniquely; one then shows that the right hand side of \eqref{Z-det} obeys the same properties. We list these properties below without derivation; for more information, we refer the reader to \cite[Lemma 5]{WZJ}.
		\begin{enumerate}
			\item $Z(x_1,\dots,x_N;y_1,\dots,y_N;k)$ is symmetric in the alphabet $(x_1,\dots,x_N)$ and separately in the alphabet $(y_1,\dots,y_N)$;
			\item $\prod_{1 \leq i,j \leq N} (1-t x_i y_j) \cdot Z(x_1,\dots,x_N;y_1,\dots,y_N;k)$ is a polynomial in $x_N$ of degree $N$;
			\item Setting $x_N = 1/y_N$ we have
			\begin{align*}
				Z(x_1,\dots,x_N;y_1,\dots,y_N;k)
				\Big|_{x_N = 1/y_N}
				=
				y_N
				\cdot
				Z(x_1,\dots,x_{N-1};y_1,\dots,y_{N-1};k);
			\end{align*}
			\item Setting $x_i = 0$ for all $1 \leq i \leq N$, we have
			\begin{align*}
				Z(0,\dots,0;y_1,\dots,y_N;k)
				=
				\prod_{b=1}^{N}
				(1-t^{k+b});
			\end{align*}
			\item For $N=1$, there holds
			\begin{align*}
				Z(x_1;y_1;k)
				=
				y_1 \cdot \left( \frac{1-t^{k+1}-t(1-t^k)x_1 y_1}{1-t x_1 y_1} \right).
			\end{align*}
		\end{enumerate}
	\end{proof}
	
	\subsection{Schur Expectation}
	
	Having computed \eqref{hybrid-pf} in determinant form, it is now easy to pass to its Schur expansion.
	
	\begin{prop}
		Define the following Schur measure with respect to two alphabets $(x_1,\dots,x_N)$ and $(y_1,\dots,y_N)$:
		\begin{align}
			\label{schur-meas}
			\mathbb{P}_{\SM}(\lambda)
			=
			\prod_{1 \leq i,j \leq N}
			(1-x_i y_j)
			\cdot
			s_{\lambda}(x_1,\dots,x_N)
			s_{\lambda}(y_1,\dots,y_N).
		\end{align}
		For fixed $N \geq 1$ and $k \geq 0$ we then have the identity
		\begin{align}
			\label{Z-schur}
			Z(x_1,\dots,x_N;y_1,\dots,y_N;k)
			=
			\prod_{i=1}^{N} y_i
			\cdot
			\mathbb{E}_{\SM}
			\left[
			\prod_{i=1}^{N}
			(1-t^{k+1+\lambda_i-i+N})
			\right],
		\end{align}
		where the expectation is taken with respect to the measure \eqref{schur-meas}.
	\end{prop}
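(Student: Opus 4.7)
The plan is to manipulate the determinant in \eqref{Z-det} so that the Cauchy--Binet identity produces the Schur expansion directly. First I would perform partial fractions on each matrix entry, obtaining
\begin{align*}
\frac{1 - t^{k+1} - t(1-t^k) x_i y_j}{(1-x_i y_j)(1-t x_i y_j)} = \frac{1}{1-x_i y_j} - \frac{t^{k+1}}{1-t x_i y_j},
\end{align*}
and then expanding each summand as a geometric series in $x_i y_j$ to reach the representation $\sum_{n \geq 0} (x_i y_j)^n (1-t^{k+n+1})$. This exhibits the matrix as a product with $A_{in} = x_i^n$ and $B_{nj} = (1-t^{k+n+1}) y_j^n$ over the infinite inner index $n \geq 0$.

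Next I would apply the Cauchy--Binet identity, giving
\begin{align*}
\det_{i,j}\left[ \sum_{n \ge 0} (x_i y_j)^n (1-t^{k+n+1}) \right] = \sum_{0 \leq n_1 < \cdots < n_N} \det_{i,j}[x_i^{n_j}] \, \det_{i,j}[y_i^{n_j}] \prod_{j=1}^N (1 - t^{k+1+n_j}).
\end{align*}
The substitution $n_j = \lambda_{N+1-j} + j - 1$ gives a bijection between strictly increasing sequences of nonnegative integers and partitions $\lambda$ with at most $N$ parts. Reversing column order in each determinant produces a sign $(-1)^{\binom{N}{2}}$, and these cancel when the two are multiplied, leaving $\det_{i,j}[x_i^{\lambda_j + N - j}] \det_{i,j}[y_i^{\lambda_j + N - j}]$, which by the Weyl character formula equals $s_\lambda(x) s_\lambda(y) \prod_{1 \leq i < j \leq N}(x_i - x_j)(y_i - y_j)$. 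Under the same bijection, the product $\prod_j (1-t^{k+1+n_j})$ becomes $\prod_{i=1}^N (1-t^{k+1+\lambda_i - i + N})$.

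Combining these Vandermonde factors with the prefactor of \eqref{Z-det} and recognizing the Schur measure \eqref{schur-meas}, I would arrive at
\begin{align*}
Z(x_1,\dots,x_N;y_1,\dots,y_N;k) = \prod_{i=1}^N y_i \cdot \mathbb{E}_{\SM}\left[ \prod_{i=1}^N (1 - t^{k+1+\lambda_i - i + N}) \right],
\end{align*}
which is exactly \eqref{Z-schur}. I do not expect any serious obstacle: given the closed-form determinant \eqref{Z-det}, the argument is essentially mechanical, with the only care needed in the sign bookkeeping for the column reversal and in verifying that the index shifts produce exactly the exponent $k+1+\lambda_i - i + N$ appearing on the right-hand side.
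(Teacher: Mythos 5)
Your proposal follows essentially the same route as the paper's proof: partial fractions on the determinant entries, geometric series to write each entry as $\sum_{a\ge 0}(1-t^{k+1+a})(x_iy_j)^a$, Cauchy--Binet, and the index shift $n_j = \lambda_{N+1-j}+j-1$ to convert the sum over strictly increasing sequences into a sum over partitions with at most $N$ parts. The sign bookkeeping from reversing columns that you flag is handled implicitly in the paper's phrase ``including Vandermonde factors on both sides,'' but the content is identical.
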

	
	\begin{proof}
		We begin by manipulating the determinant present in \eqref{Z-det}. One has
		\begin{align*}
			\det_{1 \leq i,j \leq N}
			\left[
			\frac{1-t^{k+1}-t(1-t^k) x_i y_j}{(1-x_i y_j)(1-t x_i y_j)}
			\right]
			=
			\det_{1 \leq i,j \leq N}
			\left[
			\frac{1}{1-x_i y_j}
			-
			\frac{t^{k+1}}{1-t x_i y_j}.
			\right],
		\end{align*}
		Replacing the two fractions on the right hand side by their corresponding geometric series, we obtain the identity
		\begin{align*}
			\det_{1 \leq i,j \leq N}
			\left[
			\frac{1-t^{k+1}-t(1-t^k) x_i y_j}{(1-x_i y_j)(1-t x_i y_j)}
			\right]
			=
			\det_{1 \leq i,j \leq N}
			\left[
			\sum_{a=0}^{\infty}
			(1-t^{k+1+a})
			(x_i y_j)^a
			\right].
		\end{align*}
		To the latter we apply the Cauchy--Binet identity, which yields
		\begin{align*}
			\det_{1 \leq i,j \leq N}
			\left[
			\frac{1-t^{k+1}-t(1-t^k) x_i y_j}{(1-x_i y_j)(1-t x_i y_j)}
			\right]
			=
			\sum_{0 \leq a_1 < \cdots < a_N}
			\prod_{i=1}^{N}
			(1-t^{k+1+a_i})
			\det_{1 \leq i,j \leq N}
			\left[ x_i^{a_j} \right]
			\det_{1 \leq i,j \leq N}
			\left[ y_j^{a_i} \right];
		\end{align*}
		including Vandermonde factors on both sides of the equation, and making the change of summation indices $a_i = \lambda_{N-i+1}+i-1$, we then find that
		\begin{multline*}
			\prod_{1 \leq i<j \leq N}
			\frac{1}{(x_i-x_j)(y_i-y_j)}
			\cdot
			\det_{1 \leq i,j \leq N}
			\left[
			\frac{1-t^{k+1}-t(1-t^k) x_i y_j}{(1-x_i y_j)(1-t x_i y_j)}
			\right]
			\\
			=
			\sum_{\lambda_1 \geq \cdots \geq \lambda_N \geq 0}
			\
			\prod_{i=1}^{N}
			(1-t^{k+1+\lambda_i-i+N})
			s_{\lambda}(x_1,\dots,x_N)
			s_{\lambda}(y_1,\dots,y_N).
		\end{multline*}
		This recovers the claim \eqref{Z-schur}, after multiplying through by $\prod_{i=1}^{N} y_i \cdot \prod_{1 \leq i,j \leq N} (1-x_i y_j)$.
	\end{proof}
	
	\subsection{Final Match}
	\label{ssec:final}
	
	Comparing \eqref{6v-E} and \eqref{Z-schur}, we have proved that
	\begin{align}
		\label{final-match}
		\mathbb{E}_{6{\rm v}}
		\left[
		\frac{(t^{k+1};t)_{\infty}}{(t^{k+1+\mathfrak{h}(N,N)};t)_{\infty}}
		\right]
		=
		\mathbb{E}_{\SM}
		\left[
		\prod_{i=1}^{N}
		(1-t^{k+1+\lambda_i-i+N})
		\right].
	\end{align}
	Both sides of \eqref{final-match} are polynomial in $t^{k}$; since \eqref{final-match} holds for all integer values $k \in \mathbb{Z}_{\geq 0}$ and the set $\{t^k\}_{k \geq 0}$ has a point of accumulation for $|t|<1$, we may extend the equality to all complex values by the analytic continuation $t^{k+1} = -\zeta \in \mathbb{C}$. This yields
	\begin{align*}
		\mathbb{E}_{6{\rm v}}
		\left[
		\frac{(-\zeta;t)_{\infty}}{(-\zeta t^{\mathfrak{h}(N,N)};t)_{\infty}}
		\right]
		=
		\mathbb{E}_{\SM}
		\left[
		\prod_{i=1}^{N}
		(1+\zeta t^{\lambda_i-i+N})
		\right],
	\end{align*}
	which is precisely the result \eqref{fvexpectation} in the case $q=t$, $r_i=s_i=t^{-1/2}$ and $M=N$.
	
	Extending this result to $M<N$ may be achieved by taking $y_{M+1} = \cdots = y_N = 0$ in \eqref{final-match}. On the side of the six-vertex model, this choice trivializes the contribution of the final $N-M$ thin vertical lines in the picture \eqref{hybrid-pf} (as no arrows can enter them through the left), leading to a rectangular domain; on the side of the Schur expectation, this choice does not damage the measure \eqref{schur-meas} in view of the stability property of Schur polynomials. A similar reduction is possible in the case $M>N$.

\end{document}